\definecolor{applegreen}{rgb}{0.55, 0.71, 0.0}
\definecolor{lightapplegreen}{rgb}{0.95, 1, 0.9}
\definecolor{alizarin}{rgb}{0.82, 0.1, 0.26}
\definecolor{lightalizarin}{rgb}{1, 0.95, 0.98}
\definecolor{ablue}{rgb}{0.36, 0.54, 0.66}
\definecolor{lightablue}{rgb}{0.92, 0.95, 0.97}
\DeclareMathOperator*{\argmin}{arg\,min}
\newtheorem{theorem}{Theorem}[section]
\newtheorem{corollary}[theorem]{Corollary}
\newtheorem{defi}[theorem]{Definition}
\newtheorem{lemma}[theorem]{Lemma}
\newtheorem{remark}[theorem]{Remark}
\newtheorem*{example}{Example}
\newcommand{\cM}{\mathcal{M}}
\newcommand{\cP}{\mathcal{P}}
\newcommand{\cE}{\mathcal{E}}
\newcommand{\cG}{\mathcal{G}}
\newcommand{\cT}{S}
\newcommand{\calH}{\mathcal{H}}
\newcommand{\R}{\mathbb{R}}
\newcommand{\N}{\mathbb{N}}
\newcommand{\vertiii}[1]{{\left\vert\kern-0.25ex\left\vert\kern-0.25ex\left\vert #1 
		\right\vert\kern-0.25ex\right\vert\kern-0.25ex\right\vert}}
\newcommand{\RR}{(-\infty,+\infty]}
\newcommand{\wsto}{\overset{w,2}{\rightharpoonup}}
\DeclareMathOperator{\prox}{prox}
\numberwithin{equation}{section}
\title{\textbf{Inexact JKO and proximal-gradient algorithms in the Wasserstein space}}
\author{
	Simone Di Marino\thanks{Dipartimento di Matematica, Università di Genova, Via Dodecaneso 35, 16146 Genova, Italy.}
	\thanks{\texttt{simone.dimarino@unige.it}} \and
	Emanuele Naldi\footnotemark[1]
	\thanks{\texttt{emanuele.naldi@edu.unige.it}} \and
	Silvia Villa\footnotemark[1]
	\thanks{\texttt{silvia.villa@unige.it}}
}
\date{}
\begin{document}

	\maketitle
	
\begin{abstract}
	This paper studies the convergence properties of the inexact Jordan-Kinderlehrer-Otto (JKO) scheme and proximal-gradient algorithm in the context of Wasserstein spaces. The JKO scheme, a widely-used method for approximating solutions to gradient flows in Wasserstein spaces, typically assumes exact solutions to iterative minimization problems. However, practical applications often require approximate solutions due to computational limitations. This work focuses on the convergence of the scheme to minimizers for the underlying functional and addresses these challenges by analyzing two types of inexactness: errors in Wasserstein distance and errors in energy functional evaluations. The paper provides rigorous convergence guarantees under controlled error conditions, demonstrating that weak convergence can still be achieved with inexact steps. The analysis is further extended to proximal-gradient algorithms, showing that convergence is preserved under inexact evaluations.
\end{abstract}	

\noindent \textbf{Keywords.} JKO scheme; Inexact optimization; Proximal-gradient algorithm; Optimal transport

\section{Introduction}

The Proximal Point Algorithm (PPA) in the $2$-Wasserstein space, also known as Jordan-Kinderlehrer-Otto (JKO) scheme \cite{JKO98} or Minimizing Movement scheme \cite{AGS08,DeGiorgi93}, is a well-known variational method for approximating solutions to gradient flows in the Wasserstein space, and as such it is often used in the study of partial differential equations (PDEs) via optimal transport. Given a functional $\cG$ defined over the space of probability measures, the JKO scheme approximates the gradient flow of $\cG$ in the Wassertein space by iteratively solving a sequence of minimization problems of the form
\begin{equation}\label{eq:prox_point_intro}
	\mu_{n+1} = J_{\tau_n}(\mu_n) := \argmin_{\nu \in \mathcal{P}_2(\mathbb{R}^d)} \left\{ \cG(\nu) + \frac{1}{2\tau_n} W_2^2(\nu, \mu_n) \right\},
\end{equation}
where $ W_2 $ denotes the 2-Wasserstein distance, $\{ \tau_n \}_n$ is a positive sequence of stepsize parameters, and $ \mathcal{P}_2(\mathbb{R}^d) $ is the space of probability measures with finite second moments.\\
While the first use of the JKO scheme was to prove existence results for nonlinear PDEs and their convergence properties, the importance of studying JKO schemes extends beyond purely theoretical considerations. In recent years, gradient flows in the space of probability measures have found applications in machine learning \cite{RM17,ZCL18}, neural networks
optimization \cite{ChizatBach18,MMM19,LLOM21}, and sampling \cite{Bernton18,CB18,DMM18,Wibisono18}. As an example, Langevin dynamics describe the evolution of a probability distribution according to a stochastic differential equation that models the behavior of particles in a potential field with added Brownian motion. The Langevin equation can be interpreted as a gradient flow of the Kullback-Leibler (KL) divergence with respect to the Wasserstein-2 metric \cite{Wibisono18}. The JKO scheme and the proximal-gradient algorithm in Wasserstein spaces \cite{WPG2020} provide natural ways to discretize Langevin dynamics by iteratively solving proximal minimization problems related to the KL divergence.
Although discretized Wasserstein gradient flows have been proposed in the literature \cite{AGS08,Bowles15,CL17,JKO98,MRV11,Wibisono18}, most of them have not been studied as minimization algorithms. In \cite{NS2021} weak convergence results for JKO were established. In \cite{WPG2020} estimates for the convergence of a proximal-gradient algorithm were established and later extended to weak convergence in the convex case in \cite{diao2023} only for the Bures–Wasserstein space, that is the (closed and geodesically convex) subset of Gaussians in the Wasserstein space.

Unfortunately, in Wasserstein spaces there are very few cases where the minimization problem \eqref{eq:prox_point_intro} can be computed in closed form. A notable example is the case where $\mathcal{\cG}(\mu) = \int g \, \mathrm{d}\mu$ and $g$ is proper, convex, lower semicontinuous and its proximity operator can be computed in a closed form\footnote{See for example the repository at www.proximity-operator.net.}, see \cite{Bowles15}. Other cases are when the input and the functional are specific, for example when $\mathcal{G}$ is the negative entropy and the input $\mu$ is Gaussian, see \cite[Example 8]{Wibisono18}.\\
For this reason, in the Wasserstein setting it is necessary to devise algorithms to compute inexact JKOs, where the exact solution to the minimization problem is replaced by an approximate one. A first option is to regularize problem \eqref{eq:prox_point_intro}, making it possibly easier to solve. One way is to substitute the Wasserstein distance $W_2$ in \eqref{eq:prox_point_intro} with its entropic regularized counterpart $W_2^\epsilon$ (see \cite{Leonard14,KS67}). The resulting JKO scheme is analyzed in \cite{CDPS17}. However, the main focus of the existing literature is about constructing a sequence close to the Wasserstein Gradient Flow. Other recent strategies rely on regularizing the associated Hamilton-Jacobi equation, motivated by regularized proximal operators in Euclidean space \cite{OHW23,HWO24}, and applying Hopf-Cole type transformations to obtain an effective solution strategy, see \cite{LLO23}. Another regularization strategy can be found in \cite{LLW20}, while modern ways to approximate \eqref{eq:prox_point_intro}, involving neural networks, are introduced in \cite{LWL24}. In practice, approximate solutions are unavoidable, since numerical solvers typically achieve a solution only up to a specified tolerance. Optimization schemes that tackle directly the original problem usually make use of the Benamou-Brenier formula \cite{BB2000} to rewrite the $2$-Wasserstein distance and solve a saddle point problem. A well-known method to approximate a solution of \eqref{eq:prox_point_intro} is the so-called ``ALG2" introduced in \cite{BCL16}, which makes use of an ADMM-type algorithm, but other solvers can be applied to the saddle point problem \cite{Chambolle2011}. In all these cases, the solution to \eqref{eq:prox_point_intro} is computed only approximately. Despite their practical relevance, the existing convergence analyses for JKO-based algorithms predominantly assume that $\mu_{n+1}=J_{\tau_n}(\mu_n)$, that is, each JKO step is computed exactly. However, as previously discussed, in all existing JKO-based algorithms it holds instead \[\mu_{n+1}\approx J_{\tau_n}(\mu_n).\]
For this reason, understanding the behavior of inexact JKO schemes and quantifying their convergence properties is a crucial step towards broader applicability. 

\subsection{Inexact JKO}

In this work, we address this gap by analyzing two types of errors in the computation of the JKO scheme. The first type involves an error with respect to the Wasserstein distance, where the approximate solution $ \mu_{n+1} $ satisfies
\begin{equation}\label{eq:err1}
	W_2(\mu_{n+1}, J_{\tau_n}(\mu_n)) \leq \epsilon_n,
\end{equation}
with $ J_{\tau_n}(\mu_n) $ being the exact JKO step defined in \eqref{eq:prox_point_intro} and $\{\epsilon_n\}_n$ a nonnegative sequence. We refer to this discrepancy as a \emph{distance-type error}. Our first main result concerns the convergence behavior under this type of approximation.
\begin{theorem}[Convergence for distance-type error]\label{thm:JKO_intro_convergence1}
	Let $\cG : \cP_2(\R^d) \to \mathbb{R} \ $ be proper, lower semicontinuous and convex along generalized geodesics with $\argmin \cG \neq \emptyset$. Let $\{\epsilon_n\}_n\subset \mathbb{R}_{\geq 0}$ with $\sum_{n=0}^{\infty} \epsilon_n < \infty$ and let $\{\tau_n\}_n \subset \mathbb{R}_{> 0}$ with $\sum_{i=0}^{\infty}\tau_i= \infty$. Define $\sigma_n:= \sum_{i=0}^{n-1}\tau_i$, for $n\in\mathbb{N}$ and suppose $\sum_{n=1}^{\infty}\frac{\sigma_n}{\tau_n}\epsilon_{n-1}^2  < \infty$. Let $\{\mu_n\}_n$ satisfying \eqref{eq:err1}, then 
	\[\cG(J_{\tau_n}(\mu_n))-\inf \cG = O\left(\frac{1}{\sigma_n}\right), \quad \text{for } n\to \infty,\]
	and $W_p(\mu_n,\mu^*)\to 0$ for all $p\in [1,2)$, where $\mu^*\in \argmin \cG$.
\end{theorem}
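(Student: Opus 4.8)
The plan is to transcribe the classical inexact proximal-point analysis into $\cP_2(\R^d)$, the only structural input being the variational inequality for the exact step, which holds precisely because $\cG$ is convex along generalized geodesics. Writing $\nu_n := J_{\tau_n}(\mu_n)$ for the exact step issued from $\mu_n$, the engine is the three-point inequality of Ambrosio--Gigli--Savar\'e type \cite{AGS08}: for every $\rho\in\cP_2(\R^d)$,
\[ \cG(\nu_n) + \frac{1}{2\tau_n}W_2^2(\nu_n,\mu_n) + \frac{1}{2\tau_n}W_2^2(\nu_n,\rho) \le \cG(\rho) + \frac{1}{2\tau_n}W_2^2(\rho,\mu_n). \]
Everything else is bookkeeping built on this inequality, the distance-type error $W_2(\mu_{n+1},\nu_n)\le\epsilon_n$, and the triangle inequality. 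Fix $\mu^*\in\argmin\cG$ and set $d_n:=W_2(\mu_n,\mu^*)$, $e_n:=\cG(\nu_n)-\inf\cG\ge0$.

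First I would take $\rho=\mu^*$, which yields $W_2^2(\nu_n,\mu^*)\le d_n^2-2\tau_n e_n\le d_n^2$, and then, via the triangle inequality, the quasi-Fej\'er estimate $d_{n+1}\le d_n+\epsilon_n$. Since $\sum_n\epsilon_n<\infty$, the quantity $d_n+\sum_{k\ge n}\epsilon_k$ is nonincreasing and bounded below, so $\{d_n\}$ is bounded (say by $D$) and $\lim_n W_2(\mu_n,\mu^*)$ exists for \emph{every} minimizer $\mu^*$. Re-using $\rho=\mu^*$ and converting $W_2^2(\nu_n,\mu^*)$ back to $d_{n+1}^2$ through $W_2(\nu_n,\mu^*)\ge d_{n+1}-\epsilon_n$ (whose square is valid in all cases) gives $2\tau_n e_n\le d_n^2-d_{n+1}^2+2\epsilon_n d_{n+1}$; summing telescopes to $\sum_n\tau_n e_n\le \tfrac12 d_0^2+D\sum_n\epsilon_n=:C<\infty$.

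For the rate itself I would establish an almost-monotonicity of the energy: applying the three-point inequality at step $n+1$ with $\rho=\nu_n$ and discarding two nonnegative distance terms gives $e_{n+1}\le e_n+\delta_n$ with $\delta_n:=\epsilon_n^2/(2\tau_{n+1})$. Iterating, $e_n\le e_m+\sum_{k=m}^{n-1}\delta_k$ for $m\le n$; multiplying by $\tau_m$, summing over $m<n$, and switching the order of summation produces $\sigma_n e_n\le\sum_{m<n}\tau_m e_m+\sum_{k<n}\sigma_{k+1}\delta_k\le C+\tfrac12\sum_{j\ge1}\frac{\sigma_j}{\tau_j}\epsilon_{j-1}^2$, where the reindexing $j=k+1$ uses $\delta_k\sigma_{k+1}=\tfrac12\frac{\sigma_{k+1}}{\tau_{k+1}}\epsilon_k^2$. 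The last sum is finite \emph{exactly} by the hypothesis $\sum_n\frac{\sigma_n}{\tau_n}\epsilon_{n-1}^2<\infty$, so $e_n=O(1/\sigma_n)$; and since $\sum_i\tau_i=\infty$ forces $\sigma_n\to\infty$, this also gives $e_n\to0$.

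Finally, for $W_p$-convergence with $p\in[1,2)$: the uniform bound $\sup_n\int|x|^2\,\mathrm{d}\mu_n<\infty$ (from boundedness of $d_n$) makes $\{\mu_n\}$ tight and $|x|^p$-uniformly integrable, hence narrowly precompact with the narrow and $W_p$ topologies coinciding along the sequence. Any narrow cluster point $\bar\mu$ is a minimizer, because $W_2(\mu_{n+1},\nu_n)\le\epsilon_n\to0$ makes $\mu_n$ and $\nu_{n-1}$ share cluster points and lower semicontinuity of $\cG$ together with $e_n\to0$ gives $\cG(\bar\mu)\le\liminf_k\cG(\nu_{n_k})=\inf\cG$. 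Combining ``every cluster point lies in $\argmin\cG$'' with ``$\lim_n W_2(\mu_n,\mu^*)$ exists for each $\mu^*\in\argmin\cG$'' through a Wasserstein analogue of Opial's lemma then singles out a unique narrow limit $\mu^*\in\argmin\cG$, which is also the $W_p$-limit. I expect this last step to be the main obstacle: unlike in Hilbert space, $W_2$ is only lower semicontinuous (not continuous) under narrow convergence, so a narrow cluster point need not be a $W_2$-limit, and the Opial argument must be run with the genuine optimal-transport structure rather than an inner-product identity, presumably importing the corresponding lemma from \cite{NS2021}.
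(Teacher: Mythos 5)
Your proposal is correct and takes essentially the same route as the paper's proof of \Cref{thm:JKO_convergence1}: the discrete EVI/three-point inequality, quasi-Fej\'er monotonicity and boundedness of $W_2(\mu_n,\mu^*)$, the summed EVI bound on $\sum_n\tau_n\left(\cG(J_{\tau_n}(\mu_n))-\inf\cG\right)$, the energy almost-decrease $\cG(J_{\tau_{n+1}}(\mu_{n+1}))\le\cG(J_{\tau_n}(\mu_n))+\epsilon_n^2/(2\tau_{n+1})$ obtained from the error condition, a weighted summation giving $\sigma_n\left(\cG(J_{\tau_n}(\mu_n))-\inf\cG\right)\le C$ (your Fubini swap is the paper's telescoping in disguise), and the Opial-type argument imported from \cite{NS2021} for weak and then $W_p$ convergence. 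The only adjustments needed are that the three-point inequality must be invoked at base points $\mu_n$ that may lie outside $\overline{D(\cG)}$ — which is precisely why the paper proves \Cref{thm:introduction} for arbitrary base points rather than citing \cite{AGS08} directly — and that lower semicontinuity of $\cG$ along weakly convergent subsequences is not the assumed $W_2$-lower semicontinuity but follows from convexity along geodesics via \Cref{thm:lsc_wlsc}.
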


This establishes convergence for a first inexact version of the JKO algorithm, under assumptions on the stepsize and error sequences that are analogous to those used in the analysis within Hilbert spaces. For the sequence $\{J_{\tau_n}(\mu_n)\}_n$, we are able to derive convergence rates in terms of the objective functional $\cG$ in relation to the sequence of partial sums of stepsizes $\{\sigma_n\}_n$, mirroring what is known for Hilbert spaces. While this first choice of error seems natural, a second slightly more restrictive choice can be more expressive. In particular, in the next theorem we provide convergence rates for the sequence $\{\mu_n\}_n$ which is the real sequence we have actually access to. Moreover, algorithms that try to solve the minimization problem in \eqref{eq:prox_point_intro}, have as objective to minimize the energy $\cG(\cdot) + \frac{1}{2\tau_n}W_2^2(\cdot, \mu_n)$ and can sometimes provide convergence properties in terms of this fuctional. For this reason, the second type of error we consider, is thus measured in terms of the energy of the minimization problem in \eqref{eq:prox_point_intro} and we refer to it as the \emph{variational-type error}
\begin{equation}\label{eq:err2}
	\cG(\mu_{n+1}) + \frac{1}{2\tau_n} W_2^2(\mu_{n+1}, \mu_n) \leq \cG(J_{\tau_n}(\mu_n)) + \frac{1}{2\tau_n} W_2^2(J_{\tau_n}(\mu_n), \mu_n) + \epsilon_n^2.
\end{equation}
For this error, we will obtain the following result.
\begin{theorem}[Convergence for variational-type error]\label{thm:JKO_intro_convergence2}
	Let $\cG : \cP_2(\R^d) \to \mathbb{R} \ $ proper, lower semicontinuous and convex along generalized geodesics with $\argmin \cG \neq \emptyset$. Let $\{\epsilon_n\}_n\subset \mathbb{R}_{\geq 0}$ with $\sum_{n=0}^{\infty} \epsilon_n < \infty$, $\{\tau_n\}_n \subset \mathbb{R}_{> 0}$ with $\sum_{i=0}^{\infty}\tau_i= \infty$ and let $\sigma_n:= \sum_{i=0}^{n-1}\tau_i$, for $n\in\mathbb{N}$ and $\sum_{n=1}^{\infty}\frac{\sigma_n}{\tau_n}\epsilon_{n}^2  < \infty$. Let $\{\mu_n\}_n$ satisfying \eqref{eq:err2}, then
	\[\cG(\mu_n)-\inf \cG = O\left(\frac{1}{\sigma_n}\right), \quad \text{for } n\to \infty,\]
	and $W_p(\mu_n,\mu^*)\to 0$ for all $p\in [1,2)$, where $\mu^*\in \argmin \cG$.
\end{theorem}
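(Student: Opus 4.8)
The plan is to combine the variational characterisation of the exact resolvent $\mu_n^+ := J_{\tau_n}(\mu_n)$ with the $\tfrac{1}{\tau_n}$-strong convexity (along generalised geodesics) of the functional $F_n(\nu) := \cG(\nu) + \tfrac{1}{2\tau_n} W_2^2(\nu,\mu_n)$, of which $\mu_n^+$ is the unique minimiser. First I would record the three-point inequality that convexity along generalised geodesics yields for the resolvent: for every $\nu \in \cP_2(\R^d)$,
\[
\cG(\mu_n^+) + \frac{1}{2\tau_n}\big( W_2^2(\mu_n^+,\nu) - W_2^2(\mu_n,\nu) + W_2^2(\mu_n^+,\mu_n)\big) \le \cG(\nu),
\]
which is the Wasserstein analogue of the subgradient inequality for the proximal map and is exactly the estimate underlying Theorem \ref{thm:JKO_intro_convergence1}. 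The crucial preliminary step is then to convert the variational-type error \eqref{eq:err2} into a distance bound: since $\mu_{n+1}$ is $\epsilon_n^2$-suboptimal for the $\tfrac{1}{\tau_n}$-strongly convex $F_n$, strong convexity gives $\tfrac{1}{2\tau_n} W_2^2(\mu_{n+1},\mu_n^+) \le F_n(\mu_{n+1}) - F_n(\mu_n^+) \le \epsilon_n^2$, that is $W_2(\mu_{n+1},\mu_n^+) \le \sqrt{2\tau_n}\,\epsilon_n$. This shows that the variational error is, up to the factor $\sqrt{2\tau_n}$, a distance-type error, and lets me reuse the machinery behind Theorem \ref{thm:JKO_intro_convergence1}.

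Next I would establish two estimates. Taking $\nu = \mu_n$ in the three-point inequality gives $\cG(\mu_n^+) + \tfrac{1}{\tau_n} W_2^2(\mu_n^+,\mu_n) \le \cG(\mu_n)$, and feeding this into \eqref{eq:err2} yields the quasi-monotonicity $\cG(\mu_{n+1}) \le \cG(\mu_n) + \epsilon_n^2$; since $\sum_n \epsilon_n < \infty$ forces $\sum_n \epsilon_n^2 < \infty$, the sequence $n \mapsto \cG(\mu_n) + \sum_{k\ge n}\epsilon_k^2$ is nonincreasing. Taking instead $\nu = \mu^*$ and eliminating $\cG(\mu_n^+)$ through \eqref{eq:err2} produces the one-step energy inequality
\[
2\tau_n\big(\cG(\mu_{n+1}) - \inf\cG\big) + W_2^2(\mu_{n+1},\mu_n) + W_2^2(\mu_n^+,\mu^*) \le W_2^2(\mu_n,\mu^*) + 2\tau_n\epsilon_n^2 .
\]
To telescope this I must replace $W_2^2(\mu_n^+,\mu^*)$ by $W_2^2(\mu_{n+1},\mu^*)$; using the distance bound of Step~1 and Young's inequality, $W_2^2(\mu_n^+,\mu^*) \ge (1-\theta_n)W_2^2(\mu_{n+1},\mu^*) - \tfrac{1-\theta_n}{\theta_n}\,2\tau_n\epsilon_n^2$ for any $\theta_n \in (0,1)$. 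Summing the resulting inequalities, controlling the accumulated error through $\sum_n \epsilon_n$ and $\sum_n \tfrac{\sigma_n}{\tau_n}\epsilon_n^2$, and then invoking quasi-monotonicity to bound $\cG(\mu_n) - \inf\cG$ above by the weighted average $\tfrac{1}{\sigma_n}\sum_{k<n}\tau_k(\cG(\mu_{k+1})-\inf\cG)$, I obtain the rate $\cG(\mu_n) - \inf\cG = O(1/\sigma_n)$.

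For the convergence statement, the rate gives $\cG(\mu_n) \to \inf\cG$; the control of $W_2(\mu_n,\mu^*)$ (equivalently, of the second moments) extracted from the telescoped inequality yields tightness of $\{\mu_n\}$ and hence relative compactness in $W_p$ for $p \in [1,2)$, with every limit point lying in $\argmin\cG$ by lower semicontinuity. A quasi-Fejér (Opial-type) argument in the Wasserstein space — the one used for Theorem \ref{thm:JKO_intro_convergence1}, now applied to the effective distance error $\sqrt{2\tau_n}\,\epsilon_n$ — then upgrades subsequential convergence to convergence of the whole sequence, and identifies the limit with a minimiser.

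The main obstacle is precisely the passage from $\mu_n^+$ to the genuine iterate $\mu_{n+1}$ in the quadratic terms. Unlike the distance-type setting of Theorem \ref{thm:JKO_intro_convergence1}, where $\sum_n \epsilon_n < \infty$ immediately gives $W_2$-quasi-Fejér monotonicity and boundedness of $W_2(\mu_n,\mu^*)$, here the effective error is $\sqrt{2\tau_n}\,\epsilon_n$, whose sum need not be finite; the cross terms $\sqrt{2\tau_n}\,\epsilon_n\,W_2(\mu_{n+1},\mu^*)$ therefore cannot be absorbed for free and must be balanced through the choice of $\theta_n$ and the hypothesis $\sum_n \tfrac{\sigma_n}{\tau_n}\epsilon_n^2 < \infty$, which is exactly what secures the moment bound above. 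This is also why convergence is obtained only in the weaker metrics $W_p$, $p<2$, where a compactness-based argument is available, rather than in $W_2$.
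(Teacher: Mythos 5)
Your skeleton is the same as the paper's (the result is proved there as \Cref{thm:second_error_choice}): use the $1$-convexity of $W_2^2(\cdot,\mu_n)$ along generalized geodesics with base $\mu_n$ to convert the variational error into a Wasserstein-distance error on $W_2(\mu_{n+1},J_{\tau_n}(\mu_n))$, derive the quasi-monotonicity $\cG(\mu_{n+1})\le\cG(\mu_n)+\text{(error)}$, telescope $\sigma_n$-weighted sums for the rate, and conclude convergence by the quasi-Fej\'er/Opial machinery of \Cref{thm:JKO_convergence1}. Steps 1--3 of your plan are correct and match the paper. The divergence is in the scaling of the error, and this is where your argument has a genuine gap. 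The paper's actual hypothesis, \eqref{eq:second_error_condition}, measures the tolerance in the same units as the strong-convexity modulus of $\cG_{\tau_n}$, namely $\epsilon_n^2/(2\tau_n)$; the conversion then yields exactly $W_2(\mu_{n+1},J_{\tau_n}(\mu_n))\le\epsilon_n$, a \emph{summable} distance error, and \Cref{thm:JKO_convergence1} applies directly. You work with the unscaled tolerance $\epsilon_n^2$ of \eqref{eq:err2}, get the effective error $\sqrt{2\tau_n}\,\epsilon_n$, and propose to absorb the cross terms by Young's inequality with parameters $\theta_n$.

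That patch cannot be closed under the stated hypotheses. Your recursion is, up to nonnegative terms, $(1-\theta_n)W_2^2(\mu_{n+1},\mu^*)\le W_2^2(\mu_n,\mu^*)+\tfrac{2\tau_n\epsilon_n^2}{\theta_n}$, so boundedness of the iterates requires both $\sum_n\theta_n<\infty$ and $\sum_n\tau_n\epsilon_n^2/\theta_n<\infty$; by AM--GM,
\begin{equation*}
\theta_n+\frac{2\tau_n\epsilon_n^2}{\theta_n}\;\ge\;2\sqrt{2}\,\sqrt{\tau_n}\,\epsilon_n,
\end{equation*}
so \emph{any} admissible choice of $\theta_n$ forces $\sum_n\sqrt{\tau_n}\,\epsilon_n<\infty$. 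This does not follow from $\sum_n\epsilon_n<\infty$ and $\sum_n\tfrac{\sigma_n}{\tau_n}\epsilon_n^2<\infty$: take $\tau_n=2^n$, $\epsilon_n=2^{-n/2}/n$, for which both hypotheses hold (note $\sigma_n/\tau_n\le 1$) while $\sqrt{\tau_n}\,\epsilon_n=1/n$ is not summable. In fact, with the unscaled error no proof can exist, because the literal statement fails for unbounded stepsizes: for $\cG\equiv 0$ one has $J_{\tau_n}(\mu_n)=\mu_n$, and \eqref{eq:err2} only constrains $W_2(\mu_{n+1},\mu_n)\le\sqrt{2\tau_n}\,\epsilon_n=\sqrt{2}/n$, so Dirac masses drifting by $\sqrt{2}/n$ in a fixed direction satisfy every hypothesis yet converge to no limit in any $W_p$. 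So your instinct that the $\sqrt{2\tau_n}$ factor is the crux is exactly right, but the resolution is not a cleverer balancing: the tolerance must be scaled as $\epsilon_n^2/(2\tau_n)$, as in \eqref{eq:second_error_condition} (or the stepsizes assumed bounded), for the reduction to distance-type errors --- and hence the theorem --- to hold.
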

In contrast to the statement  
of \Cref{thm:JKO_intro_convergence1}, \Cref{thm:JKO_intro_convergence2} shows that variational-type errors allow us to establish convergence for the sequence of the values $\cG(\mu_n)$ directly, rather than for $\cG(J_{\tau_n}(\mu_n))$. This distinction is practically significant, as $J_{\tau_n}(\mu_n)$  is not available in actual computations, whereas $\mu_n$
is the output actually produced by the algorithm.
\subsection{Inexact proximal-gradient}
In the last part of the paper, we focus on the composite problem
	\begin{equation}
		\label{eq:psum_intro}
		\min_{\mu\in\cP_2(\R^d)} \cG (\mu) = \mathcal{E}_F(\mu)+\mathcal{H}(\mu)
	\end{equation}
	with $\mathcal{E}_F(\mu)= \int F \, \mathrm{d}\mu$, $F:\R^d\to \R$. The prototypical example of the functional $\cG$ is the free energy functional (a key example also in \cite{WPG2020}) and it is intimately related to the Kullback-Leibler divergence.   The free energy is expressed as
	\[\mu \mapsto \int F(x) \, d\mu(x) + \text{Ent}(\mu),\]
	where $\text{Ent}(\mu)=\int \log(\mu(x)) d\mu(x)$ if $\mu$ is absolutely continuous with respect to the Lebesgue measure and has density $\mu$, and $\text{Ent}(\mu)=+\infty$ otherwise. The entropy functional is proper, lower semicontinuous and convex along generalized geodesics and by definition, its domain satisfies $\text{dom}(\text{Ent})\subset \cP_2^r(\mathcal{X})$, hypothesis that we will assume on $\mathcal{H}$. As highlighted in \cite{WPG2020}, this example is related to the Langevin dynamic and the Fokker-Planck equation. 
	
	The proximal gradient algorithm, originally devised in Hilbert spaces to minimize  sums of a smooth and a nonsmooth function similarly to \eqref{eq:psum_intro} has been extended to address problems on the Wasserstein space. This extension was first introduced  in \cite{Wibisono18} and further explored in \cite{WPG2020}.
	The method consists in two alternating steps: a gradient-descent (forward) step for the functional $\cE_F$ and a proximal (backward) step for $\mathcal{H}$, and can be written as $\mu_{n+1}=J_{\tau , \calH}\left( (I-\tau \nabla F)_{\#}(\mu_n)\right)$. Clearly this scheme generalizes the proximal point method (JKO scheme), and the additional assumption we take on the domain of $\mathcal{H}$ is the sole reason for separating the  analysis of these two algorithms.
	However, as we already observed for the JKO scheme, in practice it is generally not feasible to implement an exact proximal-gradient algorithm for this functional (see \cite[Section 4.1]{Wibisono18}) . This limitation then strongly motivates again the introduction of an inexact proximal-gradient scheme.
	
	We consider inexact schemes that perform iterations of the type
	\[\mu_{n+1} \approx J_{\tau_n , \calH}\left((I-\tau_n \nabla F)_{\#}(\mu_n)\right),\]
	with a positive sequence of stepsizes $\{\tau_n\}_{n}$. In this case, the introduction of a variable stepsize can have practical relevance. In fact, Wibisono notes in \cite[Section 2.2.2]{Wibisono18} that the classical unadjusted Langevin algorithm (ULA) can be interpreted as performing a gradient step for the potential energy and a ``flow" step for the entropy functional. Since for small stepsizes the JKO step closely approximates the flow step \cite{AGS08}, the whole ULA iteration can actually be interpreted as an inexact step of a proximal-gradient algorithm in Wasserstein spaces. However, it is well-known that the ULA procedure introduces a bias and the algorithm converges to an incorrect distribution. This drawback motivates the analysis of variable (vanishing) stepsizes, as they are sometimes  used in practice to ``adjust” the ULA scheme and drive convergence towards the correct distribution.
	
	For the convergence analysis of the scheme, we build upon the results of \cite{WPG2020}, extending them  to the convex (and not necessarily strongly convex) setting and to the inexact setting. We establish analogues of \Cref{thm:JKO_intro_convergence1} and \Cref{thm:JKO_intro_convergence2} also for the inexact proximal-gradient algorithm, providing convergence guarantees for the resulting sequence $\{\mu_n\}_n$ along with corresponding convergence rates.\\
	
	For both the algorithms we consider in this work, the results we provide have a long history in Hilbert spaces. In the original works of Martinet \cite{Martinet72} and Rockafellar \cite{Rockafellar76}, convergence for proximal point algorithms with summable errors were introduced. The impact of errors on convergence has been further analyzed in \cite{Aus87,Gul91,Com97,SolSva00,SolSva01,salzo2012inexact}. The extension to Banach spaces and Bregman divergences has also been considered in the works \cite{AlbBurIus97,Eck98,BurSva01} while an analysis of inexact evaluations for proximal-gradient algorithms can be found in \cite{CombettesWajs2005,Villa2013,Devolder2014}.  The analysis in Wasserstein spaces, however, is more subtle, as we will discuss throughout the paper. In particular, in Wasserstein spaces it is not possible to rely on the nonexpansivity propriety of the operator $J_{\tau_n}$ and thus it is not possible to use directly the correspondent of classical analysis in Hilbert spaces \cite{Combettes2004}.

\subsection{Contributions and structure of the paper} 
The main contributions of this paper can be summarized as follows:
\begin{itemize}
	\item We propose an inexact JKO framework, where the minimization problems solved at each step allow for controlled approximations in either the Wasserstein distance or energy functional evaluation. We rigorously analyze the convergence of the resulting schemes and provide sufficient conditions for weak convergence. We also discuss rates on the objective functional $\cG$.
	\item We extend the analysis to proximal-gradient algorithms in Wasserstein spaces. Based on the work in \cite{WPG2020} we first provide a finer discrete EVI for the proximal-gradient algorithm. With this, we demostrate how the inexact evaluation of proximal steps can still guarantee convergence under suitable assumptions on the error sequence. The results we obtain expand the ones in \cite{WPG2020} and \cite[Theorem 5.3]{diao2023}.
	\item Both the inexact JKO and proximal-gradient are analyzed with varying stepsizes, which result in a new and interesting analysis, parallel, but with some key differences, to the classical one in Hilbert spaces.
\end{itemize}
Even if the proximal-gradient algorithm in Wasserstein spaces is more general than the JKO scheme, we decided to keep the contribution separated. The reason lies in the fact that in the analysis of the proximal-gradient algorithm we assume an additional regularity assumption (the same present also in \cite{WPG2020}), while we do not need such assumption for the analysis of the JKO algorithm.\\

The remainder of this paper is organized as follows. In \Cref{sec:preliminaries}, we introduce the theoretical background on optimal transport, gradient flows in Wasserstein spaces, and the classical JKO scheme, while also introducing the weak topology we consider in this paper. In this section, we also provide in \Cref{thm:introduction} generalizations of known results to fit our setting. \Cref{sec:JKO} introduces the inexact JKO framework, detailing the types of errors considered and presenting the main convergence results. In \Cref{sec:PG}, we extend our approach to proximal-gradient algorithms in Wasserstein spaces and establish convergence guarantees for inexact proximal-gradient iterations.

\section{Preliminaries}\label{sec:preliminaries}

We denote by $\mathcal{M}(\R^d)$ the Banach space of bounded measures defined on $\mathcal{B}(\mathbb{R}^d)$, the Borel $\sigma$-algebra of $\R^d$. We define $\mathcal{M}_2(\R^d)$ as the subspace of measures with finite second moments, i.e.,
\[\mathcal{M}_2(\R^d) = \left\{\mu \in \mathcal{M}(\R^d) \ \Big| \ \int_{\R^d} \|x\|^2 \,\mathrm{d} |\mu|(x) < +\infty \right\},\]
where $|\mu|$ denotes the total variation of $\mu$. We write $\mathcal{P}(\R^d)$ for the subset of $\mathcal{M}(\R^d)$ of probability measures (i.e., positive measures with mass $1$) and we define $\mathcal{P}_2(\R^d) := \mathcal{P}(\R^d) \cap \mathcal{M}_2(\R^d)$.

For every $\mu \in \mathcal{P}_2(\mathbb{R}^d)$, $L^2(\mu)$ denotes the space of functions $f : (\mathbb{R}^d, \mathcal{B}(\mathbb{R}^d)) \to (\mathbb{R}^d, \mathcal{B}(\mathbb{R}^d))$ such that $\int \|f\|^2 \, \mathrm{d}\mu < + \infty$. For every $\mu \in \mathcal{P}_2(\mathbb{R}^d)$, we denote by $\| \cdot \|_{\mu}$ and $\langle \cdot, \cdot \rangle_{\mu}$ respectively the norm and the inner product of the space $L^2(\mu)$. For any measures $\mu, \nu$, we write $\mu \ll \nu$ if $\mu$ is absolutely continuous with respect to $\nu$, and we denote $\mathcal{L}^d$ the Lebesgue measure over $\mathbb{R}^d$. The set of absolutely continuous measures with respect to Lebesgue, within $\mathcal{P}_2(\R^d)$, is denoted by $\mathcal{P}_2^r(\mathbb{R}^d) := \{\mu \in \mathcal{P}_2(\mathbb{R}^d), \mu \ll \mathcal{L}^d\}$. Throughout the paper we will also refer to such measures as regular measures.

For every measurable map $T:(\mathbb{R}^d, \mathcal{B}(\mathbb{R}^d))\to (\mathbb{R}^m, \mathcal{B}(\mathbb{R}^m))$ and for every $\mu \in \mathcal{P}_2(\mathbb{R}^d)$, we denote by $T_{\#}\mu\in \mathcal{P}_2(\R^m)$ the pushforward measure of $\mu$ by $ T $ characterized by the 'transfer lemma', i.e., $
\int_{\R^m} \varphi(y) dT_{\#} \mu(y) = \int_{\R^d} \varphi(T(x)) d\mu(x)$, for any measurable and bounded function $\varphi$.

Let $p\in [1,2]$ and consider the p-Wasserstein distance defined for every $\mu, \nu \in \mathcal{P}_2(\mathbb{R}^d)$ by
\begin{equation}\label{eq:W_distiance}
	W_p^p(\mu, \nu) := \inf_{\gamma \in \Gamma(\mu, \nu)} \int_{\mathbb{R}^d \times \mathbb{R}^d} \|x - y\|^p \, \mathrm{d}\gamma(x, y),
\end{equation}
where $\Gamma(\mu, \nu)$ is the set of couplings between $\mu$ and $\nu$ \cite{Villani08}, i.e., the set of nonnegative measures $\gamma$ over $\mathbb{R}^d \times \mathbb{R}^d$ such that $ \pi^1_{\#} \gamma = \mu$ (respectively $ \pi^2_{\#} \gamma = \nu$) where $ \pi^1: (x, y) \mapsto x$ (respectively $ \pi^2: (x, y) \mapsto y$) is the projection onto the first (respectively second) component. The set $\Gamma(\mu,\nu)$ is called the set of transport plans between $\mu$ and $\nu$ and the set of plans that minimize \eqref{eq:W_distiance} is the set of optimal transport plans and denoted by $\Gamma_{\text{opt}}(\mu,\nu)$. By Jensen inequality, it is clear that $W_p(\mu,\nu)\leq W_q(\mu,\nu)$ for every $p\leq q$ and $\mu,\nu \in \cP_q(\R^d)$.

We recall a foundamental theorem, due to Brenier \cite{Brenier91} and Knott-Smith \cite{KnottSmith1984}, see also \cite[Proposition 5.2]{ABS2024}.

\begin{theorem}[Brenier, Knott-Smith]\label{thm:brenier}
	Let $p=2$, $\mu \in \mathcal{P}_2^r(\mathbb{R}^d)$ and $\nu \in \mathcal{P}_2(\mathbb{R}^d)$. Then
	\begin{itemize}
		\item[(i)] Problem \eqref{eq:W_distiance} has a unique solution $\gamma$. In addition, $\gamma$ is induced by a transport map, that is, there exists a uniquely determined $\mu$-almost everywhere map $T_{\mu}^{\nu}: \mathbb{R}^d \rightarrow \mathbb{R}^d$ such that $\gamma = (I, T_{\mu}^{\nu})_{\#} \mu$ where $(I, T_{\mu}^{\nu}): x \mapsto (x, T_{\mu}^{\nu}(x))$. Moreover $T_{\mu}^{\nu} = \nabla \psi$, where $\psi: \mathbb{R}^n \to (-\infty, +\infty]$ is a lower semicontinuous convex function differentiable $\mu$-almost everywhere. The map $T_{\mu}^{\nu}$ is called the optimal transport map from $\mu$ to $\nu$. 
		
		\item[(ii)] Conversely, if $\psi$ is convex, lower semicontinuous, and differentiable $\mu$-almost everywhere with $|\nabla \psi| \in L^2(\mu)$ and $(\nabla \psi)_{\#} \mu=\nu$, then $T_{\mu}^\nu := \nabla \psi$ is the optimal transport map from $\mu$ to $\nu$.
		
		\item[(iii)] If also $\nu \in \mathcal{P}_2^r(\mathbb{R}^d)$, then $ T_{\nu}^{\mu} \circ T_{\mu}^{\nu} = I$ $\mu$-almost everywhere and $ T_{\mu}^{\nu} \circ T_{\nu}^{\mu} = I$ $\nu$-almost everywhere.
	\end{itemize}
\end{theorem}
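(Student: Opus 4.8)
The plan is to treat the three parts in order: I would derive (i) by showing that the support of an optimal plan is cyclically monotone and applying Rockafellar's theorem, obtain (ii) from the sufficiency of cyclic monotonicity for optimality, and deduce (iii) by convex conjugation. Duality (Kantorovich potentials) would furnish an alternative route to the same convex potential, but the cyclic monotonicity argument is more self-contained and I would follow it.

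For (i), I would first secure existence of a minimizer for \eqref{eq:W_distiance}. The set $\Gamma(\mu,\nu)$ is tight, since its marginals are fixed (so Prokhorov's theorem applies), and it is weakly closed, hence weakly compact; as $\gamma \mapsto \int \|x-y\|^2\,\mathrm{d}\gamma$ is weakly lower semicontinuous, a minimizer $\gamma$ exists. Because $\mu,\nu \in \cP_2(\R^d)$, the quantities $\int \|x\|^2\,\mathrm{d}\mu$ and $\int \|y\|^2\,\mathrm{d}\nu$ are finite and depend only on the marginals, so expanding $\|x-y\|^2 = \|x\|^2 - 2\langle x,y\rangle + \|y\|^2$ shows that minimizing the transport cost is equivalent to maximizing the correlation $\int \langle x,y\rangle\,\mathrm{d}\gamma$. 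The central step is then to prove that $\mathrm{supp}(\gamma)$ is cyclically monotone: were it not, one could locate a finite cycle of points in the support along which rearranging mass strictly decreases the cost, contradicting optimality. By Rockafellar's theorem, a cyclically monotone set is contained in the graph of the subdifferential $\partial\psi$ of a proper, lower semicontinuous convex function $\psi$, so $\mathrm{supp}(\gamma) \subset \partial\psi$.

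To upgrade this to a transport map I would invoke absolute continuity of $\mu$: convex functions are differentiable Lebesgue-almost everywhere, and since $\mu \ll \mathcal{L}^d$, $\psi$ is differentiable $\mu$-almost everywhere, so $\partial\psi(x)=\{\nabla\psi(x)\}$ for $\mu$-a.e.\ $x$. This forces $y=\nabla\psi(x)$ for $\gamma$-a.e.\ $(x,y)$, giving $\gamma=(I,\nabla\psi)_\#\mu$ with $T_\mu^\nu:=\nabla\psi$. Uniqueness follows because any optimal plan must likewise concentrate on the graph of the gradient of a convex potential, and two such gradients agree $\mu$-a.e. For (ii), I would argue in the reverse direction: the plan $(I,\nabla\psi)_\#\mu$ is concentrated on $\mathrm{graph}(\nabla\psi)\subset \partial\psi$, which is cyclically monotone, and concentration on a cyclically monotone set is sufficient for optimality; hence $(I,\nabla\psi)_\#\mu$ is the optimal plan and $\nabla\psi$ the optimal map, the integrability $|\nabla\psi|\in L^2(\mu)$ guaranteeing finite cost.

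Finally, for (iii), when $\nu\in\cP_2^r(\R^d)$ as well, applying (i) with the roles of $\mu$ and $\nu$ exchanged yields $T_\nu^\mu=\nabla\phi$ for a convex $\phi$; identifying $\phi$ with the convex conjugate $\psi^*$, the convex-analytic identities $\nabla\psi^*\circ\nabla\psi=I$ $\mu$-a.e.\ and $\nabla\psi\circ\nabla\psi^*=I$ $\nu$-a.e.\ give the claimed inverse relations. I expect the main obstacle to be the cyclic monotonicity argument together with its consequence via Rockafellar's theorem: establishing that optimality of $\gamma$ forces cyclic monotonicity of its support, and passing from an abstract cyclically monotone set to a genuine convex potential whose gradient is then pinned down $\mu$-a.e., is the technical heart of the statement, whereas the remaining measure-theoretic identifications are routine.
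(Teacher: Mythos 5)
The paper does not actually prove this theorem: it is recalled as a classical result, with the proof delegated to the cited literature (Brenier \cite{Brenier91}, Knott--Smith \cite{KnottSmith1984}, and \cite[Proposition 5.2]{ABS2024}), so there is no internal argument to compare yours against. Your proposal reconstructs the standard textbook proof --- existence via tightness of $\Gamma(\mu,\nu)$ and weak lower semicontinuity of the cost, cyclic monotonicity of the support of an optimal plan, Rockafellar's theorem to produce the convex potential $\psi$, and Lebesgue-a.e.\ differentiability of $\psi$ combined with $\mu \ll \mathcal{L}^d$ to pin down the map --- and this route is correct and is essentially the one followed in the references the paper points to.

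Three places where the sketch would need tightening to become a complete proof. First, uniqueness: the claim that \emph{any} optimal plan concentrates on the gradient graph of a convex potential and that ``two such gradients agree $\mu$-a.e.''\ is, as stated, close to circular --- the standard fix is to note that if $\gamma_1,\gamma_2$ are both optimal then so is $\tfrac{1}{2}(\gamma_1+\gamma_2)$ (the cost is linear in $\gamma$), and a plan concentrated on the graph of a single-valued map can be such a half-sum only if $\gamma_1=\gamma_2$. Second, convex functions are differentiable Lebesgue-a.e.\ only on the \emph{interior} of their domain; one must observe that $\pi^1(\mathrm{supp}\,\gamma)\subset \mathrm{dom}\,\partial\psi$ and that the boundary of the convex set $\mathrm{dom}\,\psi$ is Lebesgue-negligible, hence $\mu$-negligible, so that $\mu$ gives full mass to the region where $\nabla\psi$ exists. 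Third, in part (ii) the sufficiency argument (Fenchel--Young: $\langle x,y\rangle \le \psi(x)+\psi^*(y)$ with equality on $\partial\psi$, integrated against any competitor plan) requires checking integrability of $\psi$ and $\psi^*$, or a truncation argument; this is where the finite second moments and $|\nabla\psi|\in L^2(\mu)$ enter. These are routine repairs, and your part (iii) (flipping the optimal plan, identifying $T_\nu^\mu=\nabla\psi^*$ by uniqueness, then using $\nabla\psi^*\circ\nabla\psi=I$ $\mu$-a.e.) is exactly the standard conclusion.
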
 	

\begin{corollary}
	In the hypothesis of the previous theorem it holds
	\[
	W_2^2(\mu, \nu) = \inf_{T: T_{\#}\mu = \nu} \int_{\mathbb{R}^d} \|x - T(x)\|^2 d\mu(x).
	\]
\end{corollary}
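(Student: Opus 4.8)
The plan is to establish the two inequalities between the Monge value $M := \inf_{T:\, T_{\#}\mu=\nu}\int_{\R^d}\|x-T(x)\|^2\,\mathrm{d}\mu(x)$ and the Kantorovich value $W_2^2(\mu,\nu)$ defined in \eqref{eq:W_distiance}, both of which follow almost immediately from \Cref{thm:brenier}.

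For the inequality $W_2^2(\mu,\nu)\le M$, I would observe that every admissible transport map lifts to an admissible coupling of the same cost. Given any measurable $T$ with $T_{\#}\mu=\nu$, set $\gamma_T:=(I,T)_{\#}\mu$. Then $\pi^1_{\#}\gamma_T=\mu$ and $\pi^2_{\#}\gamma_T=T_{\#}\mu=\nu$, so $\gamma_T\in\Gamma(\mu,\nu)$, and by the transfer lemma $\int\|x-y\|^2\,\mathrm{d}\gamma_T=\int\|x-T(x)\|^2\,\mathrm{d}\mu$. Since $\gamma_T$ is a feasible competitor in \eqref{eq:W_distiance}, taking the infimum over such $T$ yields $W_2^2(\mu,\nu)\le M$.

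For the reverse inequality $M\le W_2^2(\mu,\nu)$, I would invoke part (i) of \Cref{thm:brenier}: since $\mu\in\cP_2^r(\R^d)$, the optimal plan $\gamma$ for \eqref{eq:W_distiance} is induced by a map, $\gamma=(I,T_{\mu}^{\nu})_{\#}\mu$, and reading off the second marginal gives $(T_{\mu}^{\nu})_{\#}\mu=\pi^2_{\#}\gamma=\nu$. Thus $T_{\mu}^{\nu}$ is itself a competitor in the Monge problem, and again by the transfer lemma $\int\|x-T_{\mu}^{\nu}(x)\|^2\,\mathrm{d}\mu=\int\|x-y\|^2\,\mathrm{d}\gamma=W_2^2(\mu,\nu)$. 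Hence $M\le W_2^2(\mu,\nu)$, and combining the two inequalities gives the claimed identity; as a byproduct the infimum defining $M$ is attained at $T_{\mu}^{\nu}$.

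There is essentially no serious obstacle once \Cref{thm:brenier} is available: the first inequality is a pure lifting argument, and the only place where the hypothesis $\mu\in\cP_2^r(\R^d)$ is genuinely needed is to guarantee, via Brenier, that the optimal Kantorovich plan is concentrated on the graph of a map. This is exactly what prevents the Monge infimum from lying strictly above $W_2^2$ (a gap that does occur for atomic $\mu$), and it simultaneously ensures that the Monge feasible set is nonempty, so the right-hand infimum is well posed.
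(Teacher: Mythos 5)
Your proof is correct and is exactly the argument the paper intends: the paper states this corollary without proof as an immediate consequence of \Cref{thm:brenier}, and your two-inequality argument (lifting any Monge map to a coupling, and using Brenier's map-induced optimal plan for the reverse direction) is the canonical way to fill it in. The only cosmetic point is that the paper's transfer lemma is stated for bounded test functions, so applying it to $\|x-y\|^2$ strictly requires the routine extension to nonnegative measurable integrands via monotone convergence.
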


In this paper, as it is commonly the case in the literature, we may refer to the space of probability distributions $\mathcal{P}_2(\mathbb{R}^d)$ equipped with the 2-Wasserstein distance as the Wasserstein space. In the following we define some weak topologies that can be considered on the space $\mathcal{P}_2(\mathbb{R}^d)$.

\begin{defi}[Narrow topology] The narrow topology on $\cP_2(\R^d)$ is the weak$^*$ topology of $(C_b(\R^d))'$, where
	\[C_b(\R^d):=\left\{f:\R^d\to \R \ \big| \ f \text{ is continuous and bounded}\right\},\]
	endowed with the norm
	$\|f\|_{C_b(\R^d)}:=\sup_{x \in \R^d} |f(x)|$.
\end{defi}

This topology is weaker than the strong topology induced by $W_p$, $p\in[1,2]$. In particular, we recall that whenever a sequence converges with respect to the distance $W_p$ it converges also narrowly, see \cite[Lemma 5.1.7]{AGS08}. We introduce next another topology which will allow us to state more general results throughout the paper. See for example \cite[Section 3]{NS2021} and \cite[Section 5]{BCIKNV24} for further comments.

\begin{defi}
	Let $C_2^w(\R^d)$ be the space defined by
	\[C_2^w(\R^d):=\left\{f:\R^d\to \R \ \big| \ f \text{ is continuous and } \lim_{\|x\|\to \infty}\frac{f(x)}{1+\|x\|^2}=0\right\},\]
	endowed with the norm
	$\|f\|_{C_2^w(\R^d)}:=\sup_{x \in \R^d} \frac{|f(x)|}{1+\|x\|^2}$.	
\end{defi}

It is known that $\mathcal{M}_2(\R^d)$ can be seen as the dual of such space when endowed with the norm $\|\mu\|_{\cM_2(\R^d)}=\int_{\R^d} \|x\|^2 \,\mathrm{d}|\mu|(x)$, see for example \cite[Section 5]{BCIKNV24} for a proof and further comments. In this work we consider the weak-$*$ topology in this space restricted to the subset $\mathcal{P}_2(\R^d)$ and we denote it by $\tau_{w,2}$. Whenever a sequence $\{\mu_n\}_n \subset \mathcal{P}_2(\R^d)$ is converging to $\mu \in \mathcal{P}_2(\R^d)$ with respect to this topology, we denote it by $\mu_n \wsto \mu$. Clearly, this topology is finer than the narrow topology. This means that if a sequence $\{\mu_n\}_n$ is converging to $\mu$ in $\cP_2(\R^d)$ with respect to the topology $\tau_{w,2}$, then $\mu_n\to \mu$ narrowly. Moreover, the convergence we obtain is ``weak" by name, but it implies convergence in $p$-Wasserstein distance for any $p\in [1,2)$, see \cite[Remark 7.1.11]{AGS08}.

\begin{lemma}\label{lem:topology}
	Let $\{\mu_n\}_n$ and $\{\bar \mu_n\}_n$ be two bounded sequences in $(\mathcal{P}_2(\R^d),W_2)$ such that $W_2(\bar \mu_n,\mu_n)\to 0$ and $\bar \mu_n \wsto \mu^*$, then also $\mu_n\wsto \mu^*$.
\end{lemma}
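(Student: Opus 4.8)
The plan is to verify the defining property of $\wsto$ directly: for every $f\in C_2^w(\R^d)$ one must show $\int f\,\mathrm d\mu_n\to\int f\,\mathrm d\mu^*$. I would isolate two ingredients: (i) narrow convergence $\mu_n\to\mu^*$, which takes care of any bounded continuous test function, and (ii) the uniform second-moment bound $\sup_n\int\|x\|^2\,\mathrm d\mu_n=:M<\infty$, which is exactly what it means for $\{\mu_n\}_n$ to be bounded in $(\cP_2(\R^d),W_2)$: taking $\delta_0$ as reference point, $W_2^2(\mu_n,\delta_0)=\int\|x\|^2\,\mathrm d\mu_n$, and the same bound holds for $\{\bar\mu_n\}_n$.

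First I would establish narrow convergence of $\{\mu_n\}_n$. Since $\bar\mu_n\wsto\mu^*$, the results recalled just before the statement (namely that $\tau_{w,2}$-convergence implies $W_p$-convergence for $p\in[1,2)$) give $W_1(\bar\mu_n,\mu^*)\to0$. Combining this with $W_1(\mu_n,\bar\mu_n)\le W_2(\mu_n,\bar\mu_n)\to0$ and the triangle inequality yields $W_1(\mu_n,\mu^*)\to0$, hence $\mu_n\to\mu^*$ narrowly. In particular, by lower semicontinuity of the second moment under narrow convergence, $\int\|x\|^2\,\mathrm d\mu^*\le M$.

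The core step is then to upgrade narrow convergence to $\tau_{w,2}$-convergence using the moment bound. Writing $\phi:=f/(1+\|\cdot\|^2)$, the definition of $C_2^w(\R^d)$ says precisely that $\phi$ is continuous and vanishes at infinity. Fixing $\epsilon>0$, I would choose $R$ with $|\phi(x)|\le\epsilon$ for $\|x\|\ge R$ and a continuous cutoff $\eta$ equal to $1$ on the ball $B_R$ and $0$ outside $B_{2R}$. The truncation $f\eta$ lies in $C_b(\R^d)$, so $\int f\eta\,\mathrm d\mu_n\to\int f\eta\,\mathrm d\mu^*$ by narrow convergence. On the tail one has $|f(1-\eta)|\le\epsilon(1+\|x\|^2)\mathbf 1_{\{\|x\|\ge R\}}$, hence $|\int f(1-\eta)\,\mathrm d\mu_n|\le\epsilon\int(1+\|x\|^2)\,\mathrm d\mu_n\le\epsilon(1+M)$ uniformly in $n$, and likewise for $\mu^*$. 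This gives $\limsup_n|\int f\,\mathrm d\mu_n-\int f\,\mathrm d\mu^*|\le 2\epsilon(1+M)$, and letting $\epsilon\downarrow0$ concludes.

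The main obstacle, and the reason a naive argument fails, is that $\tau_{w,2}$-convergence does not follow from narrow convergence alone, and a uniform second-moment bound does \emph{not} imply uniform integrability of $\|x\|^2$ (mass of order $1/n$ escaping to radius $\sqrt n$ keeps the second moment bounded while violating uniform integrability). The decisive observation is that uniform integrability is not needed: the factor $\epsilon$ coming from the decay of $\phi$ at infinity multiplies the finite total mass $\int(1+\|x\|^2)\,\mathrm d\mu_n\le 1+M$, so the tail is controlled without any smallness of $\int_{\{\|x\|\ge R\}}\|x\|^2\,\mathrm d\mu_n$. An alternative route is to work directly with an optimal coupling $\gamma_n$ between $\bar\mu_n$ and $\mu_n$ and split $f(y)-f(x)=(1+\|y\|^2)(\phi(y)-\phi(x))+(\|y\|^2-\|x\|^2)\phi(x)$, bounding the second summand by Cauchy--Schwarz through $\delta_n:=W_2(\bar\mu_n,\mu_n)\to0$; however, the first summand then demands a delicate region decomposition (near/far from the diagonal, combined with the decay of $\phi$), so I would favour the truncation argument above.
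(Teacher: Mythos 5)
Your proof is correct, and its decisive step is genuinely different from the paper's. The two arguments share the opening move: from $W_1(\mu_n,\bar\mu_n)\le W_2(\mu_n,\bar\mu_n)\to 0$ and $W_1(\bar\mu_n,\mu^*)\to 0$ (the consequence of $\bar\mu_n\wsto\mu^*$ recalled just before the lemma), the triangle inequality gives $W_1(\mu_n,\mu^*)\to 0$. After that, the paper never tests against functions in $C_2^w(\R^d)$: it invokes sequential compactness of the sublevel set $K_c=\{\mu\in\cP_2(\R^d)\,:\,\int\|x\|^2\,\mathrm{d}\mu\le c\}$ for the topology $\tau_{w,2}$ (via \cite[Corollary 3.6 (c)]{NS2021} together with sequential closedness), extracts from every subsequence of $\{\mu_n\}_n$ a further $\tau_{w,2}$-convergent subsequence, and identifies each such limit with $\mu^*$ because $(K_c,W_1)$ is Hausdorff and $W_1(\mu_n,\mu^*)\to 0$. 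You instead prove directly that narrow convergence plus a uniform second-moment bound implies $\tau_{w,2}$-convergence, via the cutoff decomposition $f=f\eta+f(1-\eta)$: the compactly supported part is handled by narrow convergence, and the tail is bounded by $\epsilon(1+M)$ using only that every $f\in C_2^w(\R^d)$ is $o(\|x\|^2)$ at infinity; your observation that no uniform integrability of $\|x\|^2$ is required (only the smallness of $f/(1+\|x\|^2)$ on the tail region) is exactly the right point, and it is what makes the estimate close. Your route is more elementary and self-contained, avoiding the cited compactness theorem entirely, and it isolates a reusable fact: on $W_2$-bounded sets, $\tau_{w,2}$-convergence coincides with narrow convergence. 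The paper's route is shorter on the page because it delegates the analytic work to known compactness and Hausdorff properties and needs no cutoff construction. Both are complete proofs of the lemma.
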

\begin{proof}
	Since both $\{\bar \mu_n\}_n$ and $\{\mu_n\}_n$ are bounded sequences in $(\mathcal{P}_2(\R^d),W_2)$, there exists $c>0$ such that \[\{\bar \mu_n\}_n\cup\{\mu_n\}_n \subset K_c :=\left\{\mu \in \cP_2(\R^d) \ \big| \ \int_{\R^d} \|x\|^2 \, \mathrm{d} \mu(x) \leq c\right\}.\] 
	By \cite[Corollary 3.6 (c)]{NS2021} the set $K_c$ is relatively sequentially compact in $\cP_2(\R^d)$ endowed with the topology $\tau_{w,2}$. From \cite[Lemma 5.1.7]{AGS08} we also have that it is sequentially closed, so that $(K_c, \tau_{w,2})$ is sequentially compact. By hypothesis we have $W_2(\bar \mu_n,\mu_n)\to 0$, which implies $W_1(\bar \mu_n,\mu_n)\to 0$. Since $\bar \mu_n \wsto \mu^*$ then $W_1(\bar \mu_n,\mu^*)\to 0$ and from the fact that $W_1$ is induced by a norm, we obtain $W_1(\mu_n,\mu^*)\to 0$. Now, from every subsequence of $\{\mu_n\}_n$ we can extract a further subsequence converging with respect to the topology $\tau_{w,2}$ to some $\mu^{**} \in K_c$, and thus also converging with respect to $W_1$. However, since $(K_c,W_1)$ is Hausdorff and $W_1(\mu_n,\mu^*)\to 0$, we can conclude that $\mu^{**}=\mu^*$, so that $\mu_n \wsto \mu^*$.
\end{proof}

\begin{theorem}[Opial property,\text{\cite[Theorem 5.1]{NS2021}}]\label{thm:Opial}
	Let $\{\mu_n\}_n$  such that $\mu_n \wsto \mu$ in $\cP_2(\R^d)$. Then
	\[W_2^2(\nu,\mu)+\liminf_{k\to \infty} W_2^2(\mu_n,\mu)\leq \liminf_{k\to \infty} W_2^2(\mu_n,\nu) \quad \text{for every } \nu \in \cP_2(\R^d).\]
\end{theorem}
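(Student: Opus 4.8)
The plan is to reduce the statement to a single cross-term estimate and then control that term using the $W_p$-convergence ($p<2$) encoded in $\tau_{w,2}$. Since $\mu_n\wsto\mu$ in the weak-$*$ sense, the sequence is norm-bounded in $\mathcal{M}_2(\R^d)$, so $\sup_n\int\|x\|^2\,\mathrm d\mu_n<\infty$; in particular $\{\mu_n\}$ is $W_2$-bounded and tight, and $W_p(\mu_n,\mu)\to0$ for $p<2$. Writing $a_n=W_2^2(\mu_n,\nu)$ and $b_n=W_2^2(\mu_n,\mu)$, superadditivity of the $\liminf$ gives $\liminf_n a_n\ge\liminf_n(a_n-b_n)+\liminf_n b_n$, so it suffices to prove
\[\liminf_{n\to\infty}\bigl(W_2^2(\mu_n,\nu)-W_2^2(\mu_n,\mu)\bigr)\ \ge\ W_2^2(\mu,\nu).\]

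Next I would expand both squared distances through the maximal-correlation form $W_2^2(\rho,\lambda)=\int\|x\|^2\,\mathrm d\rho+\int\|y\|^2\,\mathrm d\lambda-2\,C(\rho,\lambda)$, where $C(\rho,\lambda):=\max_{\pi\in\Gamma(\rho,\lambda)}\int\langle x,y\rangle\,\mathrm d\pi$ is attained by the very plan that is $W_2$-optimal. The crucial point is that $\int\|x\|^2\,\mathrm d\mu_n$ — precisely the quantity whose convergence $\tau_{w,2}$ does \emph{not} guarantee — cancels in the difference, leaving $W_2^2(\mu_n,\nu)-W_2^2(\mu_n,\mu)=\int\|y\|^2\,\mathrm d\nu-\int\|z\|^2\,\mathrm d\mu-2\bigl(C(\mu_n,\nu)-C(\mu_n,\mu)\bigr)$. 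Taking optimal plans $\pi_n\in\Gamma(\mu_n,\nu)$ and $\gamma_n\in\Gamma(\mu_n,\mu)$ and gluing them along their common first marginal $\mu_n$ yields a three-plan $\beta_n$ in variables $(x,y,z)$. Decomposing $\langle x,y-z\rangle=\langle z,y\rangle-\|z\|^2+\langle x-z,y-z\rangle$ and bounding the $(z,y)$-marginal correlation of $\beta_n$ by $C(\mu,\nu)$ (its marginals being exactly $\mu$ and $\nu$), one reassembles everything into
\[W_2^2(\mu_n,\nu)-W_2^2(\mu_n,\mu)\ \ge\ W_2^2(\mu,\nu)-2\int\langle x-z,\,y-z\rangle\,\mathrm d\beta_n,\]
so the theorem comes down to $\limsup_n\int\langle x-z,y-z\rangle\,\mathrm d\beta_n\le 0$.

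The crux is to control this cross term by truncation. For fixed $R$, split the integral over $\{\|y-z\|\le R\}$ and its complement. On the bounded region the integrand is at most $R\,\|x-z\|$, so its contribution is at most $R\int\|x-z\|\,\mathrm d\gamma_n$, which tends to $0$: by stability of optimal plans $\gamma_n$ converges narrowly to the unique optimal self-coupling of $\mu$, the diagonal $(\mathrm{id},\mathrm{id})_\#\mu$, while $\sup_nW_2^2(\mu_n,\mu)<\infty$ forces uniform integrability of $\|x-z\|$, so the integral passes to the limit $0$. On the complement, Cauchy–Schwarz bounds the contribution by $W_2(\mu_n,\mu)\bigl(\int_{\|y-z\|>R}\|y-z\|^2\,\mathrm d\beta_n\bigr)^{1/2}$. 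Here $\|y-z\|^2$ sits exactly at the critical quadratic growth and is not uniformly integrable under a general weak limit; the fact that saves the argument is that the $(z,y)$-marginals of $\beta_n$ all carry the \emph{fixed} marginals $\mu$ and $\nu$. Indeed, on $\{\|y-z\|>R\}$ at least one of $\|y\|,\|z\|$ exceeds $R/2$, so estimating $\|y-z\|^2\le 2\|y\|^2+2\|z\|^2$ reduces the tail to the single-measure integrals $\int_{\|y\|>R/2}\|y\|^2\,\mathrm d\nu$ and $\int_{\|z\|>R/2}\|z\|^2\,\mathrm d\mu$, which are uniformly small. Hence $\sup_n\int_{\|y-z\|>R}\|y-z\|^2\,\mathrm d\beta_n\to0$ as $R\to\infty$. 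Letting $n\to\infty$ first and then $R\to\infty$ gives $\limsup_n\int\langle x-z,y-z\rangle\,\mathrm d\beta_n\le 0$, closing the argument.

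I expect this last tail estimate to be the main obstacle: it is the step where mere $W_2$-boundedness of $\{\mu_n\}$ (rather than $W_2$-convergence) must be reconciled with the critical quadratic growth, and it is exactly the mechanism through which second moment may escape to infinity — which is why the conclusion is an inequality and not, as in Hilbert space, an identity.
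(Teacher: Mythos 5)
Your proof cannot be compared line-by-line with the paper, because the paper does not prove this statement at all: it imports it verbatim from \cite[Theorem 5.1]{NS2021}. Your argument is therefore a self-contained alternative built only on standard optimal-transport tools, and after checking it I believe it is correct. The reduction via superadditivity of the $\liminf$, the key observation that $\int\|x\|^2\,\mathrm{d}\mu_n$ (exactly the quantity whose convergence $\tau_{w,2}$ does not give) cancels in the difference $W_2^2(\mu_n,\nu)-W_2^2(\mu_n,\mu)$, the gluing construction, the bound $\int\langle z,y\rangle\,\mathrm{d}\beta_n\le C(\mu,\nu)$ coming from the fixed $(z,y)$-marginals, and the two-region estimate of the cross term all assemble correctly into the claimed inequality. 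Two steps should be written out more carefully, though neither is a gap. First, the tail bound is not immediate from ``$\|y-z\|^2\le 2\|y\|^2+2\|z\|^2$ and one of $\|y\|,\|z\|$ exceeds $R/2$'' alone: mixed terms such as $\int_{\{\|y\|>R/2\}}\|z\|^2\,\mathrm{d}\beta_n$ appear, and one needs either a truncation of $\|z\|$ at a fixed level $M$ combined with $\nu(\{\|y\|>R/2\})\le 4R^{-2}\int\|y\|^2\,\mathrm{d}\nu$, or the observation that on $\{\|y-z\|>R,\ \|y\|\le R/2\}$ one has $\|z\|>R/2$ and $\|y\|^2\le\|z\|^2$; either route does yield a bound by the two fixed-marginal tails $\int_{\{\|y\|>R/2\}}\|y\|^2\,\mathrm{d}\nu$ and $\int_{\{\|z\|>R/2\}}\|z\|^2\,\mathrm{d}\mu$, uniformly in $n$. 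Second, the narrow convergence of the optimal plans $\gamma_n$ to the diagonal $(\mathrm{id},\mathrm{id})_{\#}\mu$ is exactly the point where stability of \emph{optimality} under narrow convergence must be invoked (e.g.\ \cite[Proposition 7.1.3]{AGS08}, via closedness of cyclical monotonicity): lower semicontinuity of the transport cost alone would not suffice, precisely because $W_2(\mu_n,\mu)\not\to 0$ in general, and one then concludes by tightness, uniqueness of the optimal self-coupling of $\mu$, and the usual subsequence argument. With these two points made explicit, your proof is complete, and it has the merit of making transparent where the inequality (rather than the Hilbertian identity) originates: the possible escape of second moment along $\{\mu_n\}$.
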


From \cite[Corollary 5.3]{NS2021} we have that such property holds under the weaker condition $\mu_n \to \mu$ narrowly in $\cP_2(\R^d)$.

\begin{defi}[Geodesic]
	A (minimal, constant speed) geodesic $(\mu_t)_{t\in [0,1]}$ in $\cP_2(\R^d)$ connecting two given measures $\mu_0,\mu_1\in \cP_2(\R^d)$ is a Lipschitz curve satisfying
	\begin{equation}
		W_2(\mu_s,\mu_t)= |t-s|W_2(\mu_0,\mu_1) \quad \text{for every } s,t \in [0,1].
	\end{equation}
\end{defi}

\begin{defi}[Generalized geodesic]
	A generalized geodesic between $\mu_0,\mu_1\in \cP_2(\R^d)$ (with base $\nu\in \cP_2(\R^d)$) is a curve $(\mu_t)_{t\in [0,1]}$ in $\cP_2(\R^d)$  defined by
	\[\mu_t = (\pi_t^{2\to 3})_{\#} \gamma \quad t\in [0,1],\]
	where $\pi_t^{2\to 3}:= (1-t)\pi^2+t\pi^3$, $\gamma \in \Gamma(\nu,\mu_0,\mu_1)$, $\pi_{\#}^{1,2}\gamma \in \Gamma_{\text{opt}}(\nu,\mu_0)$ and $\pi_{\#}^{1,3}\gamma \in \Gamma_{\text{opt}}(\nu,\mu_1)$, with $\pi^{1,2}:(x,y,z)\to (x,y)$ and $\pi^{1,3}:(x,y,z)\to (x,z)$.
\end{defi}

\begin{defi}[Convexity]
	Let $\cG: \cP_2(\R^d) \to (-\infty,+\infty]$ be a proper function. $\cG$ is convex along (generalized) geodesics if for every $\mu_0,\mu_1, \nu \in \cP_2(\R^d)$ there exists a  (generalized) geodesic $(\mu_s)_{s\in[0,1]}$ between $\mu_0$ and $\mu_1$ (with base $\nu$), along which
	\begin{equation}
		\cG(\mu_s)\leq (1-s)\cG(\mu_0)+s\cG(\mu_1) \quad \text{for every } s \in [0,1].
	\end{equation}
\end{defi}

Clearly, convexity along generalized geodesics implies convexity along geodesics. Notice that in \cite[Definition 9.2.4]{AGS08}, the authors consider as base points only $\nu\in D(\cG)$, this is because they consider as inputs of $J_{\tau}$ only probabilities in $\overline{D(\cG)}$. For our purposes, however, since the input of $J_{\tau}$ will be an approximation of some element in $D(\cG)$ and can be strictly outside of the domain, we have to assume this slightly more restrictive definition. The definition we provide is actually common in the literature where some times convexity along generalized geodesics is required for base points $\nu\in \cP_2^r(\R^d)$ (see for example \cite[Section 2.3.2]{WPG2020}). The notion we require coincides with this last one whenever $\cG$ is supposed lower semicontinuous, which we will suppose throughout the paper.

\begin{example}
	A classical example of a functional that is convex along generalized geodesics is the potential energy \[\mu \mapsto \int F(x) \, d\mu(x),\]
	where $F:\R^d \to \RR$ is proper convex and lower semicontinuous. Another classical example is the interaction energy \[\mu \to \iint W(x,y) d\mu(x) \, d\mu(y),\]
	where $W: \R^d\times\R^d\to \RR$ is proper convex and lower semicontinuous. A typical choice is $W(x,y)=V(x-y)$, with $V:\R^d\times\R^d\to \RR$ proper convex and lower semicontinuous. A further relevant example we will refer again later is the entropy functional
	\[\mu \mapsto \text{Ent}(\mu):=\begin{cases}\int \log(\rho(x))\, d\rho(x) & \text{if } \mu\in \cP_2^r(\R^d) \text{ and } \mu = \rho \mathcal{L}^d,\\
		+\infty & \text{otherwise.}\end{cases}\]
	See \cite[Section 9.3 and 9.4]{AGS08} for further examples and comments about geodesically convex functionals. 
\end{example}

\begin{remark}
	The functional $W_2(\mu^1,\cdot)$ is not convex along geodesics (and therefore it is also not convex along generalized geodesics). However, as noted in \cite[Remark 9.2.8]{AGS08}, $W_2^2(\mu^1,\cdot)$ is convex along all generalized geodesics with base $\mu^1$. This property is actually essential for the well-posedness of the JKO scheme.
\end{remark}

\begin{theorem}[$\text{\cite[Theorem 4.2]{NS2021}}$]\label{thm:lsc_wlsc}
	Every lower semicontinuous and geodesically convex functional $
	\varphi : \mathcal{P}_2(\R^d) \to \RR$
	is sequentially lower semicontinuous w.r.t. the topology $\tau_{w,2}$ on $\mathcal{P}_2(\R^d)$.
\end{theorem}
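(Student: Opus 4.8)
The plan is to read the statement as the geodesic analogue of the Banach-space fact that $W_2$-closed, geodesically convex sublevel sets are weakly sequentially closed. Concretely, it suffices to show that whenever $\mu_n \wsto \mu$ one has $\varphi(\mu) \le \liminf_n \varphi(\mu_n)$, which is exactly the statement that each sublevel set $\{\varphi \le a\}$ — closed for $W_2$ by lower semicontinuity and geodesically convex by the convexity hypothesis — is sequentially closed for $\tau_{w,2}$. I would first make the standard reductions: passing to a subsequence I may assume $\varphi(\mu_n) \to \ell := \liminf_n \varphi(\mu_n)$, and if $\ell = +\infty$ there is nothing to prove, so assume $\ell < \infty$. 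I then use the description of $\tau_{w,2}$ recalled above: $\mu_n \wsto \mu$ is equivalent to narrow convergence together with $\sup_n \int \|x\|^2\,\mathrm{d}\mu_n < \infty$, and it forces $W_p(\mu_n,\mu) \to 0$ for every $p \in [1,2)$. Hence the only obstruction to $W_2(\mu_n,\mu) \to 0$ — which would let me conclude immediately from $W_2$-lower semicontinuity — is that the second moments are merely lower semicontinuous, i.e. a portion of mass may escape to infinity.

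It is instructive to recall why the Hilbert-space argument does not transfer. There one invokes Mazur's lemma in the ambient vector space to replace $\mu_n$ by finite convex combinations $\sum_i \lambda_i \mu_{n_i}$ converging strongly, and then controls the functional by Jensen's inequality. In the present setting $\tau_{w,2}$ is indeed the weak-$*$ topology of the vector space $\mathcal{M}_2(\R^d)$, so Mazur does produce mixtures converging to $\mu$ in norm, hence in $W_2$; but geodesic convexity gives no control on $\varphi$ evaluated at a \emph{linear} mixture, only along geodesics. This mismatch between linear convexity and geodesic convexity is the main obstacle, and it is what makes a purely functional-analytic proof unavailable.

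To overcome it I would construct a genuinely metric recovery sequence: measures $\tilde\mu_n$ with $W_2(\tilde\mu_n,\mu) \to 0$ and $\limsup_n \varphi(\tilde\mu_n) \le \ell$, after which $W_2$-lower semicontinuity yields $\varphi(\mu) \le \ell$. Fixing a reference $\bar\mu$ in the (nonempty) proper domain of $\varphi$ and a truncation $\mu_R$ of $\mu$ that is close to $\mu$ in $W_2$ and has finite energy, I would interpolate $\mu_n$ toward $\bar\mu$ along the geodesic $\theta^n_t$ from $\mu_n$ to $\bar\mu$, so that geodesic convexity gives the energy bound $\varphi(\theta^n_t) \le (1-t)\varphi(\mu_n) + t\,\varphi(\bar\mu)$ while the interpolation compresses the tails of $\mu_n$; stability of optimal plans under narrow convergence (via lower semicontinuity of $W_2$) would give, for frozen parameters, strong $W_2$-convergence of the interpolants as $n \to \infty$. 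The technical heart is then a diagonal extraction letting the interpolation time $t_n \downarrow 0$ (to make the cost term $t_n\varphi(\bar\mu)$ negligible and keep $\limsup_n \varphi(\tilde\mu_n) \le \ell$) coupled with a truncation scale $R_n \uparrow \infty$, at a rate tuned so that the escaping second-moment mass is actually removed and strong $W_2$-convergence to $\mu$ is achieved. Balancing these two competing requirements — energy control pushes $t_n \to 0$, while strong convergence resists it — is precisely the delicate point, and the conclusion then follows by applying $W_2$-lower semicontinuity once along $n$ and once along the diagonal.
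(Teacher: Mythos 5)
Your preliminary reductions are all sound: passing to a subsequence with $\varphi(\mu_n)\to\ell<\infty$, characterizing $\tau_{w,2}$-convergence as narrow convergence plus bounded second moments, and identifying the escape of second-moment mass to infinity as the sole obstruction. The genuine gap is in the recovery-sequence construction, and it is not a "delicate balancing" issue — the construction cannot work. Displacement interpolation at time $t$ from $\mu_n$ toward a fixed base $\bar\mu$ sends a point $x$ coupled with $y$ to $(1-t)x+ty$, so it contracts distances only by the factor $(1-t)$: mass escaping to infinity in $\mu_n$ still escapes to infinity in the interpolant, for every fixed $t<1$. Concretely, take $\mu_n=(1-\tfrac1n)\delta_0+\tfrac1n\delta_{x_n}$ with $\|x_n\|=\sqrt n$. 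Then $\mu_n\wsto\mu:=\delta_0$ while $W_2^2(\mu_n,\delta_0)=1$ for all $n$. For any fixed $\bar\mu$ with bounded support and any fixed $t\in(0,1)$, the geodesic interpolant $\theta^n_t$ carries mass $\tfrac1n$ at distance at least $(1-t)\sqrt n-Ct$ from the origin, so $\liminf_n W_2^2(\theta^n_t,\theta_t)\geq(1-t)^2>0$, where $\theta_t$ is the narrow limit of the interpolants. This refutes your key claim that "for frozen parameters" stability of optimal plans yields strong $W_2$-convergence of the interpolants: plan stability gives only narrow (indeed $\tau_{w,2}$) convergence, which is exactly the mode of convergence you started from, so nothing is gained.

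Consequently the diagonal cannot be tuned. Removing the escaping second moment forces $t_n\to1$, but then $\tilde\mu_n\to\bar\mu\neq\mu$ in $W_2$ and the convexity bound degenerates to $\limsup_n\varphi(\tilde\mu_n)\leq\varphi(\bar\mu)$, which says nothing about $\ell$; whereas with $t_n\to0$, as your energy bound requires, the escaping moment survives with factor $(1-t_n)^2\to1$ and $W_2(\tilde\mu_n,\mu)\not\to0$. The truncation $\mu_R$ of the limit is immaterial, since the obstruction sits in the tails of the $\mu_n$, not of $\mu$. A secondary inaccuracy: $\tau_{w,2}$ is a weak-$*$ topology, not a weak topology, so Mazur's lemma is not available even at the linear level; in the example above every finite convex combination $\sum_i\lambda_i\mu_{n_i}$ has escaping second moment exactly $\sum_i\lambda_i=1$, so no mixture converges in norm or in $W_2$ either. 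Note also that the paper itself does not prove this theorem but invokes \cite[Theorem 4.2]{NS2021}, so your argument has to stand on its own, and it does not. One indication of where a correct proof must differ: convexity has to be exploited along geodesics joining the iterates \emph{to one another} rather than to a fixed external target — in the example, the geodesic midpoint of $\mu_n$ and $\mu_m$ (with far atoms optimally coupled to each other) has escaping second moment roughly $\tfrac12$ instead of $1$, so intra-sequence interpolation genuinely dampens the escaping mass, which interpolation toward a fixed $\bar\mu$ never does.
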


\begin{defi}[Subdifferential,$\text{\cite[Section 10.1.1]{AGS08}}$] Let $\mathcal{H}$ be a functional defined on $\cP_2^r(\R^d)$ and let $\mu \in \cP_2(\R^d)$. A function $\xi \in L^2(\mu)$ belongs to the (Fréchet) subdifferential of $\mathcal{H}$ at $\mu$ iff
	\[\mathcal{H}(\nu)-\mathcal{H}(\mu)\geq \int \langle \xi (x), T_{\mu}^\nu(x)- x \rangle \, d\mu(x).\]
	When is not generating confusion, we refer to a specific element of the subdifferential of $\mathcal{H}$ at $\mu$ as $\nabla_W \mathcal{H}(\mu)$.
\end{defi}

Let $\tau > 0$, we define the (in general multivalued) operator $J_{\tau}: \cP_2(\R^d) \to \cP_2(\R^d)$, by
\begin{equation}\label{eq:prox}
	J_{\tau}(\mu) = \argmin_{\nu \in \cP_2(\R^d)} \left\{\cG(\nu) + \frac{1}{2\tau} W_2^2(\nu,\mu)\right\}.
\end{equation}

Throughout this paper, we will use a specific property (see equation \eqref{eq:discrete_EVI} below) of the operator $J_{\tau}$ that enables to find what is called a \emph{discrete EVI} for the JKO sequence. Usually, property \eqref{eq:discrete_EVI} is stated for any initial point $\mu \in \overline{D(\cG)}$ (see for example \cite[Lemma 9.2.7]{AGS08} and \cite[Theorem 4.1.2 (i) and (ii)]{AGS08}). However, in our analysis, the input $\mu$ will be only an approximation of a previous JKO iterate and, as such, may not lie within the domain of $\cG$ (or its closure). For this reason, we need to establish a more general result, for which we provide a proof.

\begin{theorem}\label{thm:introduction}
	Let $\cG : \cP_2(\R^d) \to \RR$ proper, lower semicontinuous and convex along generalized geodesics with $\argmin \cG \neq \emptyset$ and let $\tau > 0$. Then \begin{itemize}
		\item[(i)] For all $\mu \in \cP_2(\R^d)$ the minimization problem in \eqref{eq:prox} has a unique solution
		\item[(ii)] For each $\mu, \nu \in \cP_2(\R^d)$, it holds 
		\begin{equation}\label{eq:discrete_EVI}
			W_2^2(J_{\tau}(\mu),\nu) - W_2^2(\mu,\nu) \leq 2\tau \left(\cG(\nu) -  \cG(J_{\tau}(\mu))\right) - W_2^2(J_{\tau}(\mu),\mu) 
		\end{equation}
		\item[(iii)] If $\mu_n$ is converging to $\mu$ in $(\cP_2(\R^d),W_2)$, then $W_2(J_\tau(\mu_n),J_\tau(\mu)))\to 0$.
	\end{itemize}
\end{theorem}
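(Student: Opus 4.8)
The plan is to handle the three items in order, since (ii) feeds directly into (iii). For (i), I would run the direct method of the calculus of variations on $\Phi(\nu):=\cG(\nu)+\tfrac{1}{2\tau}W_2^2(\nu,\mu)$. Since $\argmin\cG\neq\emptyset$ gives $\inf\cG>-\infty$, $\Phi$ is bounded below, and the quadratic term forces any minimizing sequence $\{\nu_k\}$ to be $W_2$-bounded, hence contained in a sublevel set $K_c$ of the second moment. By \cite[Corollary 3.6(c)]{NS2021} such a set is relatively sequentially compact for $\tau_{w,2}$, so I extract $\nu_k\wsto\bar\nu$; as $\cG$ is lower semicontinuous and geodesically convex it is sequentially lsc for $\tau_{w,2}$ by \Cref{thm:lsc_wlsc}, while $W_2^2(\cdot,\mu)$ is lsc for the narrow topology and hence for the finer $\tau_{w,2}$, giving $\Phi(\bar\nu)\le\liminf_k\Phi(\nu_k)=\inf\Phi$. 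For uniqueness I would pick a generalized geodesic with base $\mu$ joining two candidate minimizers: along it $\cG$ is convex by hypothesis, whereas $W_2^2(\cdot,\mu)$ is $1$-strongly convex, the gain being $-s(1-s)\int|y-z|^2\,d\gamma\le -s(1-s)W_2^2(\nu_0,\nu_1)$; a strict decrease of $\Phi$ at an interior point then contradicts minimality unless the two candidates coincide. Note that this works for an arbitrary input $\mu$ precisely because the adopted notion of convexity allows base points anywhere in $\cP_2(\R^d)$.

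For (ii), write $\mu_\tau:=J_\tau(\mu)$ and choose a generalized geodesic $(\sigma_s)_{s\in[0,1]}$ with base $\mu$ connecting $\sigma_0=\mu_\tau$ to $\sigma_1=\nu$ along which $\cG$ is convex. Inserting the convexity of $\cG$ and the strong-convexity bound for $W_2^2(\cdot,\mu)$ into the minimality inequality $\Phi(\mu_\tau)\le\Phi(\sigma_s)$, the order-$s^0$ terms cancel; dividing by $s>0$ and letting $s\downarrow 0$ leaves
\[2\tau\bigl(\cG(\mu_\tau)-\cG(\nu)\bigr)+W_2^2(\mu_\tau,\mu)-W_2^2(\nu,\mu)+W_2^2(\mu_\tau,\nu)\le 0,\]
which, using $W_2^2(\nu,\mu)=W_2^2(\mu,\nu)$, rearranges to \eqref{eq:discrete_EVI}.

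For (iii), set $\mu_\tau^n:=J_\tau(\mu_n)$ and $\mu_\tau:=J_\tau(\mu)$. Testing minimality of $\mu_\tau^n$ against a fixed $\nu^\star\in\argmin\cG$ bounds $W_2(\mu_\tau^n,\mu_n)$, and since $\{\mu_n\}$ is $W_2$-bounded so is $\{\mu_\tau^n\}$; hence every subsequence has a $\tau_{w,2}$-convergent sub-subsequence $\mu_\tau^{n_k}\wsto\sigma$. Passing to the $\liminf$ in $\cG(\mu_\tau^{n_k})+\tfrac{1}{2\tau}W_2^2(\mu_\tau^{n_k},\mu_{n_k})\le\cG(\nu)+\tfrac{1}{2\tau}W_2^2(\nu,\mu_{n_k})$, and using joint narrow lower semicontinuity of $W_2^2$ with $\mu_{n_k}\to\mu$, shows $\sigma$ minimizes $\Phi$, so $\sigma=\mu_\tau$ by (i) and the full sequence satisfies $\mu_\tau^n\wsto\mu_\tau$. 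The genuinely delicate step is upgrading this to $W_2$-convergence, since in the Wasserstein setting $J_\tau$ is not nonexpansive and no contraction estimate is available. Here I would invoke (ii) with input $\mu_n$ and test point $\mu_\tau$,
\[W_2^2(\mu_\tau^n,\mu_\tau)\le W_2^2(\mu_n,\mu_\tau)+2\tau\bigl(\cG(\mu_\tau)-\cG(\mu_\tau^n)\bigr)-W_2^2(\mu_\tau^n,\mu_n),\]
and take $\limsup$: using $W_2^2(\mu_n,\mu_\tau)\to W_2^2(\mu,\mu_\tau)$, the weak lower semicontinuity $\liminf_n\cG(\mu_\tau^n)\ge\cG(\mu_\tau)$, and $\liminf_n W_2^2(\mu_\tau^n,\mu_n)\ge W_2^2(\mu_\tau,\mu)$, all the right-hand terms cancel and yield $\limsup_n W_2^2(\mu_\tau^n,\mu_\tau)\le 0$, i.e. $W_2(\mu_\tau^n,\mu_\tau)\to0$. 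This final cancellation through the discrete EVI is the crux of the argument and the step I expect to demand the most care.
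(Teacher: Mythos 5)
Your proposal is correct, and part (ii) coincides with the paper's own argument (a generalized geodesic with base $\mu$ joining $J_\tau(\mu)$ to $\nu$, convexity of $\cG$ plus $1$-convexity of $W_2^2(\cdot,\mu)$, then a limit in the geodesic parameter). For (i) and (iii), however, you take a genuinely different route. For (i) the paper never uses compactness: evaluating the strong-convexity inequality at the midpoint of a generalized geodesic (base $\mu$) joining two elements $\nu_n,\nu_m$ of a minimizing sequence gives $W_2^2(\nu_n,\nu_m)\le 4\tau(\epsilon_n+\epsilon_m)$, so the sequence is Cauchy, and completeness of $(\cP_2(\R^d),W_2)$ plus $W_2$-lower semicontinuity of $\cG$ yield existence; your direct method via $\tau_{w,2}$-compactness and \Cref{thm:lsc_wlsc} is also valid, but it additionally requires the narrow sequential lower semicontinuity of $W_2^2(\cdot,\mu)$ (standard, e.g.\ \cite[Lemma 7.1.4]{AGS08}, but worth citing), whereas the paper's argument is more self-contained and quantitative. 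For (iii) the paper again argues quantitatively: by continuity of the Moreau--Yosida value function $\cG_\tau^*$ (\cite[Lemma 3.1.2]{AGS08}), $J_\tau(\mu_n)$ is an $\epsilon$-minimizer of $\cG_\tau(\mu;\cdot)$ for $n$ large, and the $1$-convexity along a generalized geodesic with base $\mu$ then gives the explicit bound $W_2^2(J_\tau(\mu_n),J_\tau(\mu))\le 2\tau\epsilon$; you instead first obtain $J_\tau(\mu_n)\wsto J_\tau(\mu)$ by compactness and identification of the limit, and then upgrade to $W_2$-convergence through the EVI of (ii), a Kadec--Klee-type cancellation. Your limsup bookkeeping in that last step is sound (all right-hand terms are bounded above, $\cG$ is $\tau_{w,2}$-sequentially lsc by \Cref{thm:lsc_wlsc}, and the cross term $W_2^2(J_\tau(\mu_n),\mu_n)$ is handled by joint narrow lower semicontinuity of $W_2$, which again should be cited). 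What each approach buys: the paper's proof avoids weak topologies entirely in this theorem and produces a rate for approximate minimizers; yours is softer and more modular, reusing (ii) and the weak-convergence toolkit that the paper only deploys later, in \Cref{sec:JKO}.
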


\begin{proof} For the first two points we can refer to \cite[Lemma 9.2.7]{AGS08} and \cite[Theorem 4.1.2 (i) and (ii)]{AGS08}. As we mentioned above, in \cite{AGS08} they consider only $\mu \in \overline{D(\cG)}$. For our purposes we need the properties to hold for every $\mu $ so we provide a brief proof here.
	
	\begin{itemize}
		\item[(i)] Let us consider a minimizing sequence $\nu_n$ for \eqref{eq:prox}. Define $\cG_{\tau}(\nu)= \cG(\nu) + \frac{1}{2\tau} W_2^2(\mu,\nu) $ and $\cG_\tau^*= \inf \cG_{\tau}$ (note that $\cG_\tau^* > -\infty$ since $\cG_\tau^* \geq \inf \cG$). By definition we have $\cG_{\tau}(\nu_n) = \cG_\tau^*+ \epsilon_n$, where $\epsilon_n \geq 0$ and $\epsilon_n \to 0$. For any $n, m \in \N$ let us consider $\nu^t$ a generalized geodesic between $\nu_n$ and $\nu_m$ with base $\mu$ along which $\cG$ is convex. By the convexity of $\cG$ and the $1$-convexity of $W_2^2(\mu, \cdot)$ along this generalized geodesic (see \cite[Remark 9.2.8]{AGS08}), for every $t \in (0,1)$ we obtain
		$$ \cG_{\tau} (\nu^t)  \leq t \cG_{\tau}(\nu_n)+ (1-t) \cG_{\tau}(\nu_m) - \frac { t(1-t)}{2\tau} W_2^2(\nu_n, \nu_m).$$
		Using that $\cG_{\tau}(\nu^{1/2}) \geq \cG_\tau^*$ we get $W_2^2(\nu_n, \nu_m) \leq 4 \tau ( \epsilon_n + \epsilon_m) $, and so $\nu_n$ is a Cauchy sequence in $(\cP_2(\R^d),W_2)$. Thus there exists $\nu_*$ such that $\nu_n \to \nu_*$ in $W_2$. By the lower semicontinuity of $\cG$ we conclude that $\cG_{\tau}(\nu_*)\leq \liminf_n \cG_{\tau} (\nu_n) = \cG_\tau^*$. A similar calculation shows that the minimizer is unique.
		\item[(ii)] Let us consider a generalized geodesic $\nu^t$ between $J_{\tau}(\mu)$ and $\nu$ with base $\mu$. For every $t\in (0,1)$, we have
		$$ \cG_{\tau} (\nu^t)  \leq t \cG_{\tau}(J_{\tau}(\mu))+ (1-t) \cG_{\tau}(\nu) - \frac { t(1-t)}{2\tau} W_2^2(J_{\tau}(\mu), \nu);$$
		using that $\cG_{\tau}(\nu^t) \geq \cG_\tau^* = \cG_{\tau}(J_{\tau}(\mu))$ we can write 
		$$ (1- t) \cG_{\tau}(J_{\tau}(\mu))\leq (1-t) \cG_{\tau}(\nu) - \frac { t(1-t)}{2\tau} W_2^2(J_{\tau}(\mu), \nu).$$
		Dividing by $(1-t)$ and then letting $t \to 1$ we obtain the desired estimate.
		\item[(iii)] For every $\mu$, we denote by $\cG_\tau^*(\mu)$ the value $\inf_{\nu\in\cP_2(\R^d)} \cG_{\tau}(\mu;\nu) := \cG(\nu) + \frac{1}{2\tau} W_2^2(\mu,\nu) $, where now we write explicitely the dependence on $\mu$. By \cite[Lemma 3.1.2]{AGS08} the functional $\cG_\tau^*$ is continuous on $\cP_2(\R^d)$. We define now $\nu_n:= J_\tau(\mu_n)$ and $\nu:= J_\tau(\mu_n)$. Then, we have
		\[\limsup_n \cG_\tau(\mu;\nu_n) = \limsup_n \cG_\tau(\mu_n;\nu_n) = \lim_n \cG_\tau^*(\mu_n)= \cG_\tau^*(\mu).\]
		This implies that for every $\epsilon > 0$ there exists $N_{\epsilon}$ such that \begin{equation}\label{eq:limsup_Ggamma}
			\cG_\tau(\mu;\nu_n)\leq \cG_\tau^*(\mu) + \epsilon = \cG_\tau(\mu;\nu) + \epsilon,
		\end{equation}
		for every $n \geq N_{\epsilon}$. We notice that $\nu_n\in D(\cG)$ for all $n\in \N$. By \cite[Lemma 9.2.7]{AGS08}, there exists a geodesic $\nu^t$ between $\nu_n$ and $\nu$, with base point $\mu$, such that
		\[\cG_\tau (\mu; \nu^t)\leq (1-t)\cG_\tau (\mu;\nu_n)+t\cG_\tau (\mu;\nu)-\frac{ t(1-t)}{2\tau}W_2^2(\nu_n,\nu).\] 
		From \eqref{eq:limsup_Ggamma} we obtain
		\[\cG_\tau (\mu;\nu^t)\leq \cG_\tau (\mu;\nu)+(1-t)\epsilon- \frac{t(1-t)}{2\tau}W_2^2(\nu_n,\nu).\]
		Using that $\cG_\tau(\mu;\nu)=\cG_\tau^*(\mu)= \inf \cG_\tau(\mu;\cdot)$ we have
		\[t(1-t)W_2^2(\nu_n,\nu) \leq 2\tau(1-t)\epsilon.\]
		Dividing by $(1-t)$ and letting $t\to 1$, we obtain $W_2^2(\nu_n,\nu)\leq 2\tau \epsilon$ and $W_2(\nu_n,\nu)\to 0$.
	\end{itemize}
\end{proof}

\section{Inexact JKO}\label{sec:JKO}

As explained in the introduction, we consider here two different choices of error that we allow to be committed from an iteration to another of the JKO scheme. We consider a proper, lower semicontinuous functional $\cG : \cP_2(\R^d) \to \RR$ which is convex along generalized geodesics and that satisfies $\argmin \cG \neq \emptyset$.

\subsection{Distance-type error}\label{sec:first_error} We consider in this section a sequence  $\{\mu_n\}_n$ generated computing the output $J_{\tau_n}(\mu_n)$ with a certain precision with respect to the Wassertein distance, i.e., a sequence satisfying the following
\begin{equation*}
	W_2(\mu_{n+1},J_{\tau_n}(\mu_n))\leq \epsilon_n \quad \text{for all } n\in \N,
\end{equation*}
where $\{\epsilon_n\}_n\subset \R_{\geq 0}$ and $\{\tau_n\}_n\subset \R_{> 0}$. From this, it follows from the triangular inequality that, for every $\nu\in \cP_2(\R^d)$ and every $n\in \N$
\[W_2(\mu_{n+1},\nu)\leq W_2(J_{\tau_n}(\mu_n),\nu)+W_2(\mu_{n+1},J_{\tau_n}(\mu_n)),\]
so that
\begin{equation*}
	W_2(\mu_{n+1},\nu) \leq W_2(J_{\tau_n}(\mu_n),\nu) + \epsilon_n \quad \text{for all } \nu\in \cP_2(\R^d).
\end{equation*}

\begin{lemma}\label{lem:bounded_sequences}
	Suppose that $\sum_n \epsilon_n < +\infty$. Then for every $\nu \in \argmin \cG$ the sequences $\{W_2(\mu_n,\nu)\}_n$ and $\{W_2(J_{\tau_n}(\mu_n),\nu)\}_n$ converge and there exists a contant $C>0$ such that
	\begin{equation}\label{eq:square_inequality}
		W_2^2(\mu_{n+1},\nu)\leq W_2^2(J_{\tau_n}(\mu_n),\nu)+C\epsilon_n.
	\end{equation}
\end{lemma}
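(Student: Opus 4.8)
The plan is to derive everything from the discrete EVI \eqref{eq:discrete_EVI} applied with base point $\mu_n$, combined with a summable-perturbation (quasi-Fej\'er) argument. First I fix $\nu \in \argmin \cG$ and apply \eqref{eq:discrete_EVI} with $\mu = \mu_n$ and $\tau = \tau_n$. Since $\cG(\nu) = \inf \cG \le \cG(J_{\tau_n}(\mu_n))$, the term $2\tau_n\bigl(\cG(\nu) - \cG(J_{\tau_n}(\mu_n))\bigr)$ is nonpositive, and dropping also the nonnegative term $W_2^2(J_{\tau_n}(\mu_n),\mu_n)$ yields the key contraction
\[
W_2(J_{\tau_n}(\mu_n),\nu) \le W_2(\mu_n,\nu).
\]
Inserting this into the triangle inequality recalled just above the lemma gives the quasi-Fej\'er recursion $W_2(\mu_{n+1},\nu) \le W_2(\mu_n,\nu) + \epsilon_n$.

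Next I would invoke the standard fact that a nonnegative sequence $\{a_n\}_n$ satisfying $a_{n+1}\le a_n+\epsilon_n$ with $\sum_n \epsilon_n<\infty$ converges: the auxiliary sequence $b_n := a_n + \sum_{k\ge n}\epsilon_k$ is nonincreasing and bounded below, hence convergent, and since the tail $\sum_{k\ge n}\epsilon_k\to 0$, the sequence $a_n = W_2(\mu_n,\nu)$ converges. In particular $\{W_2(\mu_n,\nu)\}_n$ is bounded by some $M$, and the contraction above forces $W_2(J_{\tau_n}(\mu_n),\nu)\le M$ as well, so both families of distances are bounded.

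With boundedness in hand, I would square the triangle inequality,
\[
W_2^2(\mu_{n+1},\nu) \le W_2^2(J_{\tau_n}(\mu_n),\nu) + 2\epsilon_n W_2(J_{\tau_n}(\mu_n),\nu) + \epsilon_n^2,
\]
and bound the coefficient of $\epsilon_n$ via $2 W_2(J_{\tau_n}(\mu_n),\nu)+\epsilon_n \le 2M + \sup_n \epsilon_n =: C$, which is finite because $\epsilon_n\to 0$; this gives \eqref{eq:square_inequality}. Finally, the convergence of $\{W_2(J_{\tau_n}(\mu_n),\nu)\}_n$ follows by sandwiching: from the contraction and the triangle inequality,
\[
W_2(\mu_{n+1},\nu) - \epsilon_n \le W_2(J_{\tau_n}(\mu_n),\nu) \le W_2(\mu_n,\nu),
\]
and since both outer terms share the common limit $L:=\lim_n W_2(\mu_n,\nu)$ while $\epsilon_n\to 0$, the squeeze theorem yields $W_2(J_{\tau_n}(\mu_n),\nu)\to L$.

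I expect the only genuinely delicate point to be the very first step: ensuring \eqref{eq:discrete_EVI} can be applied even though $\mu_n$ need not lie in $\overline{D(\cG)}$. This is precisely why \Cref{thm:introduction}(ii) was established for arbitrary base points, so the obstacle has already been removed upstream; the remainder is routine quasi-Fej\'er bookkeeping.
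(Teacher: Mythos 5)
Your proposal is correct and follows essentially the same route as the paper's proof: the contraction $W_2(J_{\tau_n}(\mu_n),\nu)\le W_2(\mu_n,\nu)$ from the discrete EVI \eqref{eq:discrete_EVI} (valid for arbitrary base points by \Cref{thm:introduction}), the quasi-Fej\'er recursion $W_2(\mu_{n+1},\nu)\le W_2(\mu_n,\nu)+\epsilon_n$, boundedness, and the squared triangle inequality with the bounded cross term absorbed into $C$. The only cosmetic difference is that you obtain convergence of $\{W_2(J_{\tau_n}(\mu_n),\nu)\}_n$ by squeezing it between two sequences with common limit (which, as a small bonus, identifies its limit with that of $\{W_2(\mu_n,\nu)\}_n$), whereas the paper notes it satisfies its own summably perturbed recursion.
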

\begin{proof}
	For every $\nu \in \argmin \cG$, we have from \eqref{eq:discrete_EVI} \[W_2(J_{\tau_n}(\mu_n),\nu)\leq  W_2(\mu_n,\nu),\] so that
	\[W_2(\mu_{n+1},\nu)\leq W_2(\mu_n,\nu) + \epsilon_n,\]
	and the sequence $\{W_2(\mu_n,\nu)\}_n$ converges. On the other hand, we also have
	\[W_2(J_{\tau_n}(\mu_{n+1}),\nu)\leq W_2(\mu_{n+1},\nu)\leq W_2(J(\mu_{n}),\nu)+\epsilon_{n},\]
	which shows that $\{W_2(J_{\tau_n}(\mu_n),\nu)\}_n$ converges too. It also follows that there exists a $c>0$ such that $W_2(\mu_n,\nu)\leq c $ for all $n\in \N$ (and thus also $W_2(J_{\tau_n}(\mu_n),\nu)\leq c $ for all $n\in \N$).\\
	Thus, we obtain
	\begin{equation*}
		\begin{aligned}
			W_2^2(\mu_{n+1},\nu) & = W_2(\mu_{n+1},\nu)W_2(\mu_{n+1},\nu)\\
			&\leq (W_2(J_{\tau_n}(\mu_n),\nu) +\epsilon_n)(W_2(J_{\tau_n}(\mu_n),\nu) +\epsilon_n)\\
			& \leq W_2^2(J_{\tau_n}(\mu_n),\nu) + 2 c \epsilon_n + \epsilon_n^2,
		\end{aligned}
	\end{equation*}
	and since $\epsilon_n$ is bounded, this concludes the proof.
\end{proof}

The previous lemma implies quasi Fejer monotonicity with respect to $\argmin \cG$ (see \cite[Definition 5.32]{BCombettes} for the definition in Hilbert spaces). Specifically, it follows that for all $\nu \in \argmin \cG$ there exists a constant $C > 0$ such that
\begin{equation}
	W_2^2(\mu_{n+1},\nu)\leq W_2^2( \mu_n,\nu)+C\epsilon_n.
\end{equation}
From now on, we will use the notation $\tilde{\epsilon}_n := C \epsilon_n$, for all $n\in \N$.
\begin{lemma}\label{lem:JKO_asymptotic_regularity}
	It holds that $W_2(J_{\tau_n}(\mu_n),\mu_n)\to 0$ as $n\to +\infty$.
\end{lemma}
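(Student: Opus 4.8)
The plan is to start from the discrete EVI in \eqref{eq:discrete_EVI} and exploit the fact that minimizers of $\cG$ make one of the terms on the right-hand side nonpositive. Fix any $\nu \in \argmin \cG$ and apply \eqref{eq:discrete_EVI} with $\mu = \mu_n$ and $\tau = \tau_n$. Since $\cG(\nu) = \inf \cG \leq \cG(J_{\tau_n}(\mu_n))$, the term $2\tau_n(\cG(\nu) - \cG(J_{\tau_n}(\mu_n)))$ is nonpositive, so that after rearranging I obtain the clean one-sided bound
\[
0 \leq W_2^2(J_{\tau_n}(\mu_n),\mu_n) \leq W_2^2(\mu_n,\nu) - W_2^2(J_{\tau_n}(\mu_n),\nu).
\]
Thus it suffices to show that the right-hand side tends to $0$ as $n \to \infty$.

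To control the right-hand side I would invoke \Cref{lem:bounded_sequences}, which guarantees that both sequences $\{W_2(\mu_n,\nu)\}_n$ and $\{W_2(J_{\tau_n}(\mu_n),\nu)\}_n$ converge; call their limits $\ell$ and $\ell'$ respectively. The key point is that these two limits must coincide. Indeed, the two inequalities established inside the proof of \Cref{lem:bounded_sequences}, namely $W_2(J_{\tau_n}(\mu_n),\nu) \leq W_2(\mu_n,\nu)$ (from \eqref{eq:discrete_EVI} with $\nu \in \argmin \cG$) and $W_2(\mu_{n+1},\nu) \leq W_2(J_{\tau_n}(\mu_n),\nu) + \epsilon_n$ (from the triangle inequality together with \eqref{eq:err1}), yield $\ell' \leq \ell$ and $\ell \leq \ell'$ upon passing to the limit and using $\epsilon_n \to 0$. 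Hence $\ell = \ell'$.

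Combining these facts, the right-hand side satisfies
\[
W_2^2(\mu_n,\nu) - W_2^2(J_{\tau_n}(\mu_n),\nu) \longrightarrow \ell^2 - \ell'^2 = 0,
\]
and the nonnegativity of $W_2^2(J_{\tau_n}(\mu_n),\mu_n)$ lets me conclude by a squeeze argument that $W_2(J_{\tau_n}(\mu_n),\mu_n) \to 0$. I expect the only genuinely substantive step to be the identification $\ell = \ell'$; everything else is either a direct application of \eqref{eq:discrete_EVI} or a consequence of \Cref{lem:bounded_sequences}. The argument does not require any assumption on the stepsizes $\tau_n$ beyond positivity, since the term involving $\tau_n$ is simply discarded.
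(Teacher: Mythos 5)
Your proof is correct, but it follows a genuinely different route from the paper's. The paper first combines the squared error inequality \eqref{eq:square_inequality} with the discrete EVI \eqref{eq:discrete_EVI} to obtain the one-step inexact EVI \eqref{eq:inexact_discrete_EVI}, drops the nonpositive term $2\tau_n(\cG(\nu)-\cG(J_{\tau_n}(\mu_n)))$, and then telescopes: summing over $n$ and using $\sum_n \tilde{\epsilon}_n < \infty$ yields the summability statement $\sum_n W_2^2(J_{\tau_n}(\mu_n),\mu_n) < +\infty$, from which convergence to zero follows. You instead never form the inexact EVI: you use the exact EVI alone to bound $W_2^2(J_{\tau_n}(\mu_n),\mu_n)$ by the difference $W_2^2(\mu_n,\nu) - W_2^2(J_{\tau_n}(\mu_n),\nu)$, and then argue that the two distance sequences converge to a common limit $\ell = \ell'$ via the sandwich $W_2(\mu_{n+1},\nu) \le W_2(J_{\tau_n}(\mu_n),\nu) + \epsilon_n \le W_2(\mu_n,\nu) + \epsilon_n$, concluding by squeeze. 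Both arguments are sound and both ultimately lean on the quasi-Fejér structure established in \Cref{lem:bounded_sequences} (in your case, the convergence of the two sequences itself requires $\sum_n \epsilon_n < \infty$, so your proof needs summability just as the paper's does, only packaged differently). The trade-off: the paper's telescoping yields the quantitatively stronger conclusion \eqref{eq1} (summability of the squared distances, not just their vanishing), and the intermediate inequality \eqref{eq:inexact_discrete_EVI} it derives is recycled verbatim at the start of the proof of \Cref{thm:JKO_convergence1}, so the derivation does double duty; your argument buys a slightly more elementary proof that avoids the constant $C$ and the squared inequality \eqref{eq:square_inequality} entirely, but delivers only the limit statement, which happens to be all the paper actually uses of this lemma later.
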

\begin{proof}
	Combining \eqref{eq:square_inequality} with the discrete EVI inequality \eqref{eq:discrete_EVI}, we obtain
	\begin{equation}\label{eq:inexact_discrete_EVI}
		W_2^2(\mu_{n+1},\nu) - W_2^2(\mu_n,\nu) \leq 2\tau_n \left(\cG(\nu) -  \cG(J_{\tau_n}(\mu_n))\right) - W_2^2(J_{\tau_n}(\mu_n),\mu_n) +\tilde{\epsilon}_n,
	\end{equation}
	for all $\nu \in \argmin \cG$. From this, we derive
	\begin{equation*}
		W_2^2(\mu_{n+1},\nu) \leq  W_2^2(\mu_n,\nu) -  W_2^2(J_{\tau_n}(\mu_n),\mu_n) +\tilde{\epsilon}_n.
	\end{equation*}
	Since $\sum_n \tilde{\epsilon}_n = C \cdot \sum_n \epsilon_n <+\infty$, summing up both sides gives
	\begin{equation}\label{eq1}
		\sum_n W_2^2(J_{\tau_n}(\mu_n),\mu_n)<+\infty.
	\end{equation}
	This implies in particular that $W_2(J_{\tau_n}(\mu_n),\mu_n)\to 0$.
\end{proof}

\begin{theorem}\label{thm:JKO_convergence1}
	Let $\cG : \cP_2(\R^d) \to \RR $ be proper, lower semicontinuous, and convex along generalized geodesics, with $\argmin \cG \neq \emptyset$. Let $\{\epsilon_n\}_n\subset \mathbb{R}_{\geq 0}$ with $\sum_{n=0}^{\infty} \epsilon_n < \infty$ and let $\{\tau_n\}_n \subset \mathbb{R}_{> 0}$ with $\sum_{i=0}^{\infty}\tau_i= \infty$. Define $\sigma_n:= \sum_{i=0}^{n-1}\tau_i$, for $n\in\mathbb{N}$. Let $\{\mu_n\}_n$ be a sequence satisfying
	\[W_2(\mu_{n+1},J_{\tau_n}(\mu_n))\leq \epsilon_n, \quad \text{for all } n \in \N.\]
	\begin{enumerate}
		\item It holds the rate
		\begin{equation}\label{eq:rate_bar1}
			\cG(\bar \beta_n)-\inf \cG = O\left(\frac{1}{\sigma_n}\right), \quad \text{as } n\to \infty,
		\end{equation}
		where $\bar \beta_n := J_{\tau_{j_n}}(\mu_{j_n})$ with $j_n = \argmin_{i=0,\dots,n-1}\{\cG(J_{\tau_i}(\mu_i))\}$, defines the sequence of the best iterates.
		\item If $\sum_{n=1}^{\infty}\frac{\sigma_n}{\tau_n}\epsilon_{n-1}^2  < \infty$, then \[\cG(J_{\tau_n}(\mu_n))-\inf \cG = O\left(\frac{1}{\sigma_n}\right), \quad \text{as } n\to \infty,\] and $\{\mu_n\}_n$ converges with respect to the topology $\tau_{w,2}$ to some $\mu^*\in \argmin \cG$. In particular we have $W_p(\mu_k,\mu^*)\to 0$ for all $p\in[1,2)$.
	\end{enumerate}
\end{theorem}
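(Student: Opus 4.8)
The plan is to derive everything from the inexact discrete EVI \eqref{eq:inexact_discrete_EVI}. Fix any $\nu \in \argmin\cG$, and write $a_n := W_2^2(\mu_n,\nu)$ and $r_n := \cG(J_{\tau_n}(\mu_n)) - \inf\cG \geq 0$. Dropping the nonpositive term $-W_2^2(J_{\tau_n}(\mu_n),\mu_n)$ in \eqref{eq:inexact_discrete_EVI} and using $\cG(\nu)=\inf\cG$ gives $2\tau_n r_n \leq a_n - a_{n+1} + \tilde\epsilon_n$, which telescopes over $i = 0,\dots,n-1$ to
\[
2\sum_{i=0}^{n-1}\tau_i r_i \leq a_0 + \sum_{i=0}^{\infty}\tilde\epsilon_i =: M < \infty,
\]
since $a_n \geq 0$ and $\sum_n\tilde\epsilon_n = C\sum_n\epsilon_n < \infty$. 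For Part 1, as $j_n$ minimizes $r_i$ over $i<n$, one has $\sigma_n r_{j_n} = \big(\sum_{i=0}^{n-1}\tau_i\big) r_{j_n} \leq \sum_{i=0}^{n-1}\tau_i r_i \leq M/2$, which is precisely \eqref{eq:rate_bar1}.

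For Part 2 the key is an \emph{approximate monotonicity} of $\phi_n := \cG(J_{\tau_n}(\mu_n))$. I would apply the exact discrete EVI \eqref{eq:discrete_EVI} at step $n+1$ (to the pair $\mu_{n+1},\,J_{\tau_{n+1}}(\mu_{n+1})$) with test measure $\nu = J_{\tau_n}(\mu_n)$. Since the squared-distance terms $W_2^2(J_{\tau_{n+1}}(\mu_{n+1}),J_{\tau_n}(\mu_n))$ and $W_2^2(J_{\tau_{n+1}}(\mu_{n+1}),\mu_{n+1})$ are nonnegative while $W_2^2(\mu_{n+1},J_{\tau_n}(\mu_n)) \leq \epsilon_n^2$ by the distance-type error, rearranging yields
\[
\phi_{n+1} \leq \phi_n + \frac{\epsilon_n^2}{2\tau_{n+1}}, \qquad \text{i.e.} \qquad r_{n+1} \leq r_n + \frac{\epsilon_n^2}{2\tau_{n+1}}.
\]
Iterating gives $r_n \leq r_i + \sum_{j=i}^{n-1}\frac{\epsilon_j^2}{2\tau_{j+1}}$ for $i<n$; multiplying by $\tau_i$, summing over $i=0,\dots,n-1$, and exchanging the order of summation produces
\[
\sigma_n r_n \leq \sum_{i=0}^{n-1}\tau_i r_i + \sum_{j=0}^{n-1}\frac{\epsilon_j^2}{2\tau_{j+1}}\,\sigma_{j+1} \leq \frac{M}{2} + \frac12\sum_{m=1}^{\infty}\frac{\sigma_m}{\tau_m}\epsilon_{m-1}^2,
\]
the last sum being finite exactly by the hypothesis $\sum_n \frac{\sigma_n}{\tau_n}\epsilon_{n-1}^2 < \infty$ (reindexing $m=j+1$). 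This gives $r_n = O(1/\sigma_n)$.

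For the convergence of $\{\mu_n\}_n$, I would run a Fejér-plus-Opial argument. By \Cref{lem:bounded_sequences} the sequence is bounded in $(\cP_2(\R^d),W_2)$, hence contained in a sublevel set $K_c$ which is sequentially compact for $\tau_{w,2}$ by \cite[Corollary 3.6(c)]{NS2021} and \cite[Lemma 5.1.7]{AGS08}; thus every subsequence has a further $\tau_{w,2}$-convergent subsequence $\mu_{n_k}\wsto\bar\mu$. Since $\sigma_n\to\infty$, Part 2 gives $\cG(J_{\tau_{n_k}}(\mu_{n_k}))\to\inf\cG$, and \Cref{lem:JKO_asymptotic_regularity} gives $W_2(J_{\tau_{n_k}}(\mu_{n_k}),\mu_{n_k})\to 0$, so \Cref{lem:topology} yields $J_{\tau_{n_k}}(\mu_{n_k})\wsto\bar\mu$ too; the weak lower semicontinuity of \Cref{thm:lsc_wlsc} then forces $\cG(\bar\mu)\leq\inf\cG$, i.e. $\bar\mu\in\argmin\cG$. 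To upgrade to convergence of the whole sequence I would take two limit points $\mu_1^*,\mu_2^*\in\argmin\cG$; since $\{W_2(\mu_n,\nu)\}_n$ converges for every $\nu\in\argmin\cG$ by \Cref{lem:bounded_sequences}, applying the Opial property \Cref{thm:Opial} along each subsequence and adding the two resulting inequalities forces $W_2^2(\mu_1^*,\mu_2^*)\leq 0$, so the limit is unique and $\mu_n\wsto\mu^*$. Finally $\tau_{w,2}$-convergence implies $W_p(\mu_n,\mu^*)\to 0$ for every $p\in[1,2)$ by \cite[Remark 7.1.11]{AGS08}.

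The main obstacle is the approximate monotonicity step. In Hilbert space the objective decreases monotonically along exact proximal iterations, but here the approximation $\mu_{n+1}\approx J_{\tau_n}(\mu_n)$ destroys this, and recovering it only up to the defect $\epsilon_n^2/(2\tau_{n+1})$ — precisely the quantity that the weighting hypothesis $\sum_n\frac{\sigma_n}{\tau_n}\epsilon_{n-1}^2<\infty$ is designed to keep summable — is what allows passing from the best-iterate rate of Part 1 to the last-iterate rate of Part 2. Care is also needed because $J_{\tau_n}$ is not nonexpansive in the Wasserstein setting, which is exactly why the Opial property rather than a contraction argument must drive the final convergence.
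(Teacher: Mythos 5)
Your proposal is correct and takes essentially the same route as the paper's proof: summing the inexact discrete EVI \eqref{eq:inexact_discrete_EVI} for the best-iterate rate, establishing the approximate monotonicity $\cG(J_{\tau_{n+1}}(\mu_{n+1})) \leq \cG(J_{\tau_n}(\mu_n)) + \epsilon_n^2/(2\tau_{n+1})$ (the paper derives it from the minimality of the prox evaluated at the previous output, you from the exact EVI \eqref{eq:discrete_EVI} --- equivalent one-line arguments), followed by the same weighted summation and the same compactness/lower-semicontinuity/Opial scheme for weak convergence. The only cosmetic differences are bookkeeping: the paper multiplies the decrease by $\sigma_n$ and telescopes where you iterate and exchange a double sum, and it runs the Opial uniqueness argument on $\{J_{\tau_n}(\mu_n)\}$ and then transfers to $\{\mu_n\}$ via \Cref{lem:topology}, whereas you run it on $\{\mu_n\}$ directly.
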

\begin{proof}
	We first prove that every weak cluster point of $\{J_{\tau_n}(\mu_n)\}_n$ is a minimizer of $\cG$. We recall equation \eqref{eq:inexact_discrete_EVI} from which we know it holds
	\begin{equation}\label{eq:inexact_discrete_EVI_2}
		W_2^2(\mu_{n+1},\nu) -  W_2^2(\mu_n,\nu) \leq 2\tau_n(\cG(\nu) -  \cG(J_{\tau_n}(\mu_n))) -  W_2^2(J_{\tau_n}(\mu_n),\mu_n) +\tilde{\epsilon}_n,
	\end{equation}
	for all $\nu \in \argmin \cG$ and $n \in \N$. Summing up from $0$ to $N-1$, we obtain
	\begin{equation}\label{eq:for_rates1}
		2\sum_{n=0}^{N-1}\tau_n\cG(J_{\tau_n}(\mu_n))+W_2^2(\mu_N,\nu)\leq 2\sigma_N \cG(\nu)+W_2^2(\mu^0,\nu)+\sum_{n=0}^{N-1}(\tilde{\epsilon}_n-W_2^2(J_{\tau_n}(\mu_n),\mu_n)).
	\end{equation}
	Since $\bar \beta_N$ is the best iterate, we have
	\begin{equation}\label{eq:bar_convexity}
		\cG(\bar \beta_N)\leq \frac{1}{\sigma_N}\sum_{n=0}^{N-1}\tau_n\cG(\mu_{n+1}),
	\end{equation}
	and combining this with equation \eqref{eq:for_rates1}, we obtain
	\begin{equation}\label{eq:rate_ergodic1}
		\cG(\bar \beta_N) - \inf \cG \leq \frac{C}{\sigma_N}.
	\end{equation}
	The first part of the theorem is established. On the other hand, for all $n\geq 1$, we have
	\begin{equation}\label{eq:decrease_JKO}
		\begin{aligned}
			\cG(J_{\tau_n}(\mu_{n}))+\frac{1}{2\tau_n}W_2^2(J_{\tau_n}(\mu_{n}),\mu_{n}) & \leq \cG(J_{\tau_{n-1}}(\mu_{n-1}))+\frac{1}{2\tau_n}W_2^2(J_{\tau_{n-1}}(\mu_{n-1}),\mu_{n})\\ &\leq \cG(J_{\tau_{n-1}}(\mu_{n-1}))+\frac{1}{2\tau_n}\epsilon_{n-1}^2,
		\end{aligned}
	\end{equation}
	and thus, multiplying by $\sigma_n$
	\begin{equation}
		\begin{aligned}
			(\sigma_{n+1}-\tau_n)\cG(J_{\tau_n}(\mu_{n})) - \sigma_{n}\cG(J_{\tau_{n-1}}(\mu_{n-1})) \leq \frac{\sigma_n}{2\tau_n}\epsilon_{n-1}^2.
		\end{aligned}
	\end{equation}
	Summing from $1$ to $N-1$ and recalling that $\sigma_1=\tau_0$, we obtain
	\begin{equation}\label{eq:inexact_decreasing_sum}
		\begin{aligned}
			\sigma_{N}\cG(J_{\tau_{N-1}}(\mu_{N-1})) - \sum_{n=1}^{N-1}\frac{\sigma_n}{2\tau_n}\epsilon_{n-1}^2 \leq \sum_{n=0}^{N-1}\tau_n \cG(J_{\tau_n}(\mu_{n})).
		\end{aligned}
	\end{equation}
	Combining \eqref{eq:for_rates1} and \eqref{eq:inexact_decreasing_sum}, we arrive at
	\begin{equation}\label{eq:for_rates2}
		2\sigma_{N}\cG(J_{\tau_{N-1}}(\mu_{N-1})) - 2\sigma_N \cG(\nu)\leq W_2^2(\mu^0,\nu)+\sum_{n=0}^{N-1}\tilde{\epsilon}_n + \sum_{n=1}^{N-1}\frac{\sigma_n}{\tau_n}\epsilon_{n-1}^2.
	\end{equation}
	Since by hypothesis $\tilde{\epsilon}_n$ and  $\frac{\sigma_n}{\tau_n}\epsilon_{n-1}^2$ are summable, we get
	\begin{equation}\label{eq:rate_Jmun}
		\cG(J_{\tau_{N-1}}(\mu_{N-1})) - \inf \cG \leq \frac{C}{\sigma_N},
	\end{equation}
	for some constant $C>0$. To conclude we use the Opial property (\Cref{thm:Opial}) to prove convergence of the whole sequence. First we notice that $\{J_{\tau_n}(\mu_n)\}_n$ is bounded in $(\cP_2(\R^d),W_2)$ and thus \[\sup_{n\in \N} \int \|x\|^2 \, d J_{\tau_n}(\mu_n)(x)= \sup_{n\in \N} W_2^2(J_{\tau_n}(\mu_n),\delta_0) < +\infty.\]
	Using for example \cite[Corollary 3.6 (c)]{NS2021} we conclude that $\{J_{\tau_n}(\mu_n)\}_n$ has at least a cluster point with respect to the topology $\tau_{w,2}$ in $\cP_2(\R^d)$.
	Consider a subsequence $\{J_{\tau_{n_i}}(\mu_{n_i})\}_i$ of $\{J_{\tau_n}(\mu_n)\}_n$ and a cluster point $\mu\in \cP(X)$ such that $J_{\tau_{n_i}}(\mu_{n_i})\wsto \mu$, then, by lower semicontinuity of $\cG$ (also with respect to the topology $\tau_{w,2}$ we consider on $\mathcal{P}^2(\R^d)$, see \Cref{thm:lsc_wlsc}), we obtain
	\[\cG(\mu)\leq \liminf_i \cG(J_{\tau_{n_i}}(\mu_{n_i}))\leq \limsup_i \cG(J_{\tau_{n_i}}(\mu_{n_i})) \leq \inf \cG,\]
	which means that $\mu \in \argmin \cG$. This proves that every cluster point of the sequence is a minimizer of $\cG$.\\
	Now, in order to prove convergence of the whole sequence, we suppose there exist two subequences $\{J_{\tau_{n_i}}(\mu_{n_i})\}_i$ and $\{J_{\tau_{m_i}}(\mu_{m_i})\}_i$ of $\{J_{\tau_n}(\mu_n)\}_n$ with $J_{\tau_{n_i}}(\mu_{n_i})\wsto\nu^*\in \cP_2(\R^d)$ and $J_{\tau_{m_i}}(\mu_{m_i})\wsto\nu^{**}\in \cP_2(\R^d)$. We define $\ell(\nu):= \lim_n W_2(J_{\tau_n}(\mu_n),\nu)$ for all $\nu \in \argmin \cG$, which always exists by \Cref{lem:bounded_sequences}. Since we have proven $\nu^*, \nu^{**}\in \argmin \cG$, we can use the Opial property and obtain whenever $\nu^*\neq\nu^{**}$
	\begin{equation*}
		\begin{aligned}
			\ell(\nu^*) = \liminf_i W_2(J_{\tau_{n_i}}(\mu_{n_i}),\nu^*) < \liminf_i W_2^2(J_{\tau_{n_i}}(\mu_{n_i}),\nu^{**})=\ell(\nu^{**})\\
			\ell(\nu^{**}) = \liminf_i W_2(J_{\tau_{m_i}}(\mu_{m_i}),\nu^{**}) < \liminf_i W_2^2(J_{\tau_{m_i}}(\mu_{m_i}),\nu^{*})=\ell(\nu^{*})
		\end{aligned}
	\end{equation*}
	so that it must be $\nu^*=\nu^{**}$. We have proved that there exists $\mu^*\in \argmin \cG$ such that $J_{\tau_n}(\mu_n)\wsto \mu^*$. Since both $\{J_{\tau_n}(\mu_n)\}_n$ and $\{\mu_n\}_n$ are bounded sequences and by \Cref{lem:JKO_asymptotic_regularity} we have $W_2(J_{\tau_n}(\mu_n),\mu_n)\to 0$, we can conclude that $\mu_n\wsto \mu^*$ using \Cref{lem:topology}.
\end{proof}

\begin{remark}\label{rem:strongly-convex} Notice that imposing that $\cG$ is $\lambda_{\cG}$-convex along generalized geodesics \cite[Definition 9.2.4]{AGS08}, with $\lambda_{\cG} >0$, an analogue of \cite[Theorem 4.1.2 (ii)]{AGS08} combined with \eqref{eq:inexact_discrete_EVI_2} yields the inequality \[(1+\lambda_{\cG} \tau_n) W_2^2(\mu_{n+1},\nu)\leq W_2^2(\mu_n,\nu) + \tilde{\epsilon}_n\, \quad \text{ for all } \nu \in \argmin \cG.\] From this, strong convergence  in $W_2$ can be obtained. However, due to the error $\epsilon_n$ incurred at each iteration, the convergence rate cannot be improved without further assumptions on $\{\epsilon_n\}_n$. This consideration motivates our decision not to treat the strongly convex case separately.
\end{remark}

\begin{remark}
	The condition $\sum_{n=0}^{\infty}\frac{\sigma_n}{\tau_n}\epsilon_{n-1}^2  < \infty$ is not overly restrictive and it is somehow what we expect. This condition is satisfied in particular in the following cases:
	\begin{itemize}
		\item $\epsilon_n=0$ for all $n\in \mathbb{N}$
		\item $\{\epsilon_n\}_n$ is nonincreasing and $0 < \inf_n \tau_n \leq \sup_n \tau_n < M$ for some $M>0$.
		\item $\{\epsilon_n\}_n$ and $\{\tau_n\}_n$ nonincreasing and $\sigma_n\epsilon_{n-1}/\tau_n$ is bounded, for example with $\epsilon_n=\frac{1}{n^{1+\delta}}$ and $\tau_n= \frac{1}{n}$, for all $n\in \N$, where $\delta > 0$.
		\item $\{\epsilon_n\}_n$ is nonincreasing and $\{\tau_n\}_n$ nondecreasing.
	\end{itemize}
\end{remark}

\begin{remark}[Ergodic convergence]\label{rmk:ergodic}
		Supposing that the elements of the sequence $\{J_{\tau_n}(\mu_n)\}_n$ belong to $\cP_2^r(\R^d)$  we can apply \cite[Proposition 7.6]{Carlier11} which states that if a functional is convex along generalized geodesics then it is convex along barycenters (for regular measures). In particular, this implies that  \eqref{eq:bar_convexity} holds for $\bar \beta_n:= \text{Bar}\left(J_{\tau_i}(\mu_i), \frac{\tau_i}{\sigma_n}\right)_{i=0,\dots,n-1}$, the Wasserstein barycenter of the first $n$ elements of the sequence~$\{J_{\tau_i}(\mu_i)\}_i$ with parameters $\{\frac{\tau_i}{\sigma_n}\}_0^{n-1}$. As a result, \eqref{eq:rate_bar1} holds for the barycenter sequence $\{\bar \beta_n\}_n$, demonstrating that ergodic-type convergence rates can be achieved without requiring the additional assumptions in point~2 of \Cref{thm:JKO_convergence1}. We expect this result to hold under more general assumptions, for example for functionals for which regular measures are dense in energy. In such cases, one could exploit the stability of Wasserstein barycenters, see \cite[Theorem 3]{LeGouic2017} and \cite{Carlier2024}. We do not pursue this direction since Wasserstein barycenters are not as practical to compute as their Hilbertian counterpart, thereby limiting their practical relevance.
	\end{remark}
	
	\begin{remark}
		When $\cG$ is $L_{\cG}$-Lipschitz continuous in a ball that contains $\{\mu_n\}_n$ and $\{J_{\tau_n}(\mu_n)\}_n$, we can derive a rate on $\cG(\mu_n)$. In fact, we have $\cG(\mu_{n+1})\leq \cG(J_{\tau_n}(\mu_n))+ L_{\cG} \epsilon_n$, and if $\epsilon_n = O\left(\frac{1}{\sigma_n}\right)$, for $n\to \infty$, then
		\[\cG(\mu_{n+1})-\inf \cG\leq \cG(J_{\tau_n}(\mu_n))-\inf \cG+ L_{\cG}\epsilon_n = O\left(\frac{1}{\sigma_n}\right), \quad \text{for } n\to \infty.\]
		Notice, however, that in Wasserstein spaces, the connection between convexity and local Lipschitz continuity is not obvious. As an example, let $\cG: \cP_2(\R^d) \to \RR$ be proper, lower semicontinuous and convex along geodesics. If $\cG$ is approximable by discrete measures and the domain of $\cG$ is totally convex, then $\cG$ is locally Lipschitz, see \cite[Remark 9]{CSS_2023}. However, several functionals are not locally Lipschitz continuous. In particular, if the domain of $\cG$ is included in the set of regular measures $\cP_2^r(\R^d)$, then $\cG$ cannot be locally Lipschitz continuous since every regular measure can be approximated in $(\cP_2(\R^d), W_2)$ by discrete measures. A notable example where locally Lipschitz continuity fails is given by the negative entropy.
\end{remark}

\paragraph{Nonexpansivity of the proximal map}

In classical analysis in Hilbert spaces, the weak convergence of the sequence $\{\mu_n\}_n$ is established in a more direct way. However, such an analysis relies on the nonexpansivity of the proximal map, a property that is not known to hold in Wasserstein spaces. Indeed, this remains an open problem. Even the projection onto convex sets can fail to be convex. As a first example, consider the set of two-atomic measures \[S=\{\mu\in \cP_2(\R^d)\mid \#\operatorname{supp}(\mu)=2\}.\] This set is closed and geodesically convex, but the projection onto this set fails to be Lipschitz, as also shown in \Cref{fig:projection_failure}.
\begin{figure}[h]
	\centering
	\begin{subfigure}[b]{0.49\textwidth}
		\includegraphics[width=\textwidth]{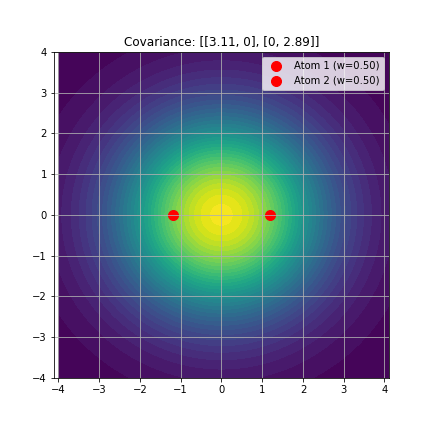}
	\end{subfigure}
	\hfill
	\begin{subfigure}[b]{0.49\textwidth}
		\includegraphics[width=\textwidth]{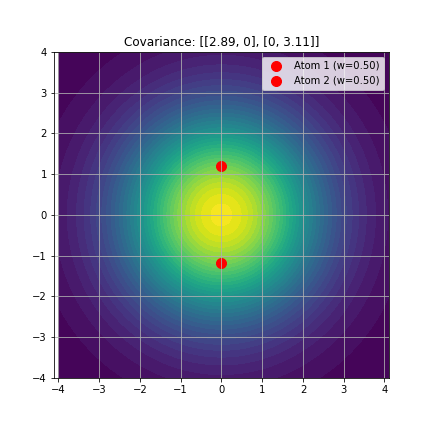}
	\end{subfigure}
	\caption{Projection of two nearby 2D Gaussians onto the set $S$. Despite having really similar distributions, the projections differ significantly, showing that the projection map is not Lipschitz.}
	\label{fig:projection_failure}
\end{figure}

Even considering sets which are convex along generalized geodesics, the Lipschitzianity could fail or remain open, as commented in \cite{DePhilippis2016}. Consider the set
\[S' := \left\{\rho \in L_1^{+}(\R^d) \mid \int \rho(x) \, dx = 1, \ \rho \leq 1\right\}\cap \cP_2(\R^d)\]
In \cite[Corollary 5.3]{DePhilippis2016} it is proven that the projection onto $S'$ is locally $\frac{1}{2}$-H\"older, but in \cite[Remark 5.1]{DePhilippis2016} the authors say that Lipschitzianity is still an open problem.\\
For positive results, some insights are provided in \cite{CarlenCraig2013}, where a slight modification of the metric is considered and there are specific cases where the nonexpansivity of $J_{\tau}$ is guaranteed. A known positive example of the nonexpansivity of the proximal map, is when the functional $\cG$ is totally convex, i.e., convex along any coupling, see \cite[Definition 2.7]{CSS_2023}. In this case, it is possible to introduce a probability space $(\Omega, \mathcal{B},\mathbb{P})$, consider the function \[\begin{aligned}
g: L^2(\Omega, \mathbb{P};\mathbb{R}^d) & \to (-\infty, +\infty] \\
X & \mapsto \mathcal{G}( X_{\#} \mathbb{P} )\end{aligned}\] and prove that $g$ is proper, convex and lower semicontinuous. By the last part of \cite[Proposition 5.2 (1)]{CSS_2023}, whenever $\mu = X_{\#}\mathbb{P}$, then $J_{\tau}(\mu)=(\prox_{\tau g}^{L^2} \circ X)_{\#}\mathbb{P}$. Since $g$ is proper, convex and lower semicontinuous, then $\prox_{\tau g}^{L^2}:L^2(\Omega,\mathbb{P};\R^d)\to L^2(\Omega,\mathbb{P};\R^d)$ is nonexpansive \cite[Proposition 12.28]{BCombettes}. Thus, given $\mu, \mu'\in \cP_2(\R^d)$, for every $X,X'$ such that $X_{\#}\mathbb{P}= \mu$, $X'_{\#}\mathbb{P}= \mu'$, we have
\[W_2^2(J_{\tau}(\mu),J_{\tau}(\mu'))\leq \|\prox_{\tau g}^{L^2}(X)-\prox_{\tau g}^{L^2}(X')\|_{L^2}^2\leq \|X-X'\|_{L^2}^2.\]
So that
\[W_2(J_{\tau}(\mu),J_{\tau}(\mu'))\leq \inf_{\substack{X \sim \mu \\ X' \sim \mu'}} \|X - X'\|_{L^2}^2 = W_2(\mu,\mu').\]

\subsection{Variational-type error}\label{sec:second_error}

Considering the second error choice described in the introduction, we can prove the following result.

\begin{theorem}\label{thm:second_error_choice}
	Let $\cG : \cP_2(\R^d) \to \RR $ proper, lower semicontinuous and convex along generalized geodesics and suppose $\argmin \cG \neq \emptyset$. Let $\{\epsilon_n\}_n\subset \mathbb{R}_{\geq 0}$ with $\sum_{n=0}^{\infty} \epsilon_n < \infty$ and let $\{\tau_n\}_n \subset \mathbb{R}_{> 0}$ with $\sum_{i=0}^{\infty}\tau_i= \infty$. Define $\sigma_n:= \sum_{i=0}^{n-1}\tau_i$, for $n\in\mathbb{N}$. Consider $\{\mu_n\}_n$ such that for all $n\in \N$ it holds
	\begin{equation}\label{eq:second_error_condition}
		\cG(\mu_{n+1})+\frac{1}{2{\tau_n}}W_2^2(\mu_{n+1},\mu_n)\leq \cG(J_{\tau_n}(\mu_n))+\frac{1}{2\tau_n}W_2^2(J_{\tau_n}(\mu_n),\mu_n)+ \frac{\epsilon_n^2}{2\tau_n}.
	\end{equation}
	\begin{enumerate}
		\item It holds the rate
		\begin{equation*}
			\cG(\beta_n)-\inf \cG = O\left(\frac{1}{\sigma_n}\right), \quad \text{as } n\to \infty,
		\end{equation*}
		where $\beta_n := \mu_{j_n}$ with $j_n = \argmin_{i=0,\dots,n-1}\{\cG(\mu_i)\}$, defines the sequence of the best iterate.
		\item If $\sum_{n=0}^{\infty}\frac{\sigma_n}{\tau_n}\epsilon_{n}^2  < \infty$, then \[\cG(\mu_n)-\inf \cG = O\left(\frac{1}{\sigma_n}\right), \quad \text{as } n\to \infty,\] and $\{\mu_n\}_n$ converges with respect to the topology $\tau_{w,2}$ to some $\mu^*\in \argmin \cG$. In particular we have $W_p(\mu_k,\mu^*)\to 0$ for all $p\in[1,2)$.
	\end{enumerate}
\end{theorem}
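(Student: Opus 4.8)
The plan is to reduce the variational-type condition to the distance-type condition and then to derive an \emph{inexact EVI written directly at $\mu_{n+1}$}, after which the argument parallels the proof of \Cref{thm:JKO_convergence1}. First I would show that \eqref{eq:second_error_condition} forces $W_2(\mu_{n+1},J_{\tau_n}(\mu_n))\leq\epsilon_n$. Applying the discrete EVI \eqref{eq:discrete_EVI} with input $\mu_n$ and test measure $\nu=\mu_{n+1}$ and dividing by $2\tau_n$ gives
\[\cG(J_{\tau_n}(\mu_n))+\tfrac{1}{2\tau_n}W_2^2(J_{\tau_n}(\mu_n),\mu_n)+\tfrac{1}{2\tau_n}W_2^2(J_{\tau_n}(\mu_n),\mu_{n+1})\leq \cG(\mu_{n+1})+\tfrac{1}{2\tau_n}W_2^2(\mu_{n+1},\mu_n),\]
and subtracting \eqref{eq:second_error_condition} leaves $\tfrac{1}{2\tau_n}W_2^2(J_{\tau_n}(\mu_n),\mu_{n+1})\leq\tfrac{\epsilon_n^2}{2\tau_n}$. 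Thus $\{\mu_n\}_n$ also satisfies the distance-type bound with the \emph{same} $\{\epsilon_n\}_n$, so \Cref{lem:bounded_sequences} applies verbatim: $\{W_2(\mu_n,\nu)\}_n$ and $\{W_2(J_{\tau_n}(\mu_n),\nu)\}_n$ converge for every $\nu\in\argmin\cG$, both sequences are bounded by some $c>0$, and $W_2^2(\mu_{n+1},\nu)\leq W_2^2(J_{\tau_n}(\mu_n),\nu)+\tilde\epsilon_n$ with $\tilde\epsilon_n=C\epsilon_n$ summable.

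The core step is to rewrite the EVI at $\mu_{n+1}$ instead of $J_{\tau_n}(\mu_n)$. Starting from \eqref{eq:discrete_EVI} with a test measure $\nu\in\argmin\cG$, I would use \eqref{eq:second_error_condition} to bound $\cG(J_{\tau_n}(\mu_n))+\tfrac{1}{2\tau_n}W_2^2(J_{\tau_n}(\mu_n),\mu_n)$ from below by $\cG(\mu_{n+1})+\tfrac{1}{2\tau_n}W_2^2(\mu_{n+1},\mu_n)-\tfrac{\epsilon_n^2}{2\tau_n}$, and replace $W_2^2(J_{\tau_n}(\mu_n),\nu)$ by $W_2^2(\mu_{n+1},\nu)-\tilde\epsilon_n$ via the quasi-Fejér estimate of \Cref{lem:bounded_sequences}. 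Multiplying by $2\tau_n$ produces
\[W_2^2(\mu_{n+1},\nu)-W_2^2(\mu_n,\nu)\leq 2\tau_n\bigl(\cG(\nu)-\cG(\mu_{n+1})\bigr)-W_2^2(\mu_{n+1},\mu_n)+\hat\epsilon_n,\]
with $\hat\epsilon_n:=\epsilon_n^2+\tilde\epsilon_n$ summable. This is the exact analogue of \eqref{eq:inexact_discrete_EVI} but stated at the true iterate $\mu_{n+1}$, and it is precisely why the variational error yields rates for $\cG(\mu_n)$ rather than for $\cG(J_{\tau_n}(\mu_n))$.

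From here the argument mirrors \Cref{thm:JKO_convergence1}. Summing the inexact EVI from $0$ to $N-1$ and discarding the nonnegative terms $W_2^2(\mu_N,\nu)$ and $W_2^2(\mu_{n+1},\mu_n)$ gives $2\sum_{n=0}^{N-1}\tau_n\cG(\mu_{n+1})\leq 2\sigma_N\cG(\nu)+W_2^2(\mu_0,\nu)+\sum_{n}\hat\epsilon_n$; since the weights $\tau_n/\sigma_N$ sum to one, the best-iterate inequality $\cG(\beta_N)\leq\tfrac{1}{\sigma_N}\sum_{n=0}^{N-1}\tau_n\cG(\mu_{n+1})$, as in \eqref{eq:bar_convexity}, yields Part~1. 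For Part~2 I would first establish the approximate monotonicity $\cG(\mu_{n+1})\leq\cG(\mu_n)+\tfrac{\epsilon_n^2}{2\tau_n}$, which follows by testing the minimality of $J_{\tau_n}(\mu_n)$ against $\mu_n$ (so that $\cG(J_{\tau_n}(\mu_n))+\tfrac{1}{2\tau_n}W_2^2(J_{\tau_n}(\mu_n),\mu_n)\leq\cG(\mu_n)$) and then invoking \eqref{eq:second_error_condition}. An Abel-summation argument identical to the one leading to \eqref{eq:inexact_decreasing_sum} then gives $\sum_{n=0}^{N-1}\tau_n\cG(\mu_{n+1})\geq\sigma_N\cG(\mu_N)-\sum_{k=1}^{N-1}\tfrac{\sigma_k}{2\tau_k}\epsilon_k^2$, and combining with the summed EVI produces $\cG(\mu_N)-\inf\cG=O(1/\sigma_N)$ under the hypothesis $\sum_n\tfrac{\sigma_n}{\tau_n}\epsilon_n^2<\infty$.

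Finally, convergence of the whole sequence follows as in \Cref{thm:JKO_convergence1}: boundedness in $(\cP_2(\R^d),W_2)$ guarantees a $\tau_{w,2}$-cluster point, and the rate $\cG(\mu_n)\to\inf\cG$ together with the sequential lower semicontinuity of \Cref{thm:lsc_wlsc} forces every cluster point into $\argmin\cG$; uniqueness is then obtained from the Opial property \Cref{thm:Opial} applied to $\ell(\nu):=\lim_n W_2(\mu_n,\nu)$ (which exists by \Cref{lem:bounded_sequences}), giving $\mu_n\wsto\mu^*$ and hence $W_p(\mu_n,\mu^*)\to0$ for all $p\in[1,2)$. The main obstacle I anticipate is the derivation of the direct inexact EVI at $\mu_{n+1}$ — in particular the bookkeeping needed to absorb the cross terms when passing from $W_2^2(J_{\tau_n}(\mu_n),\nu)$ to $W_2^2(\mu_{n+1},\nu)$ — whereas the Abel summation and the Opial-based uniqueness are routine adaptations of the distance-type analysis.
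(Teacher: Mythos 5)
Your proposal is correct and follows essentially the same route as the paper's proof: extract the distance-type bound $W_2(\mu_{n+1},J_{\tau_n}(\mu_n))\le\epsilon_n$ from \eqref{eq:second_error_condition}, sum an inexact EVI to get the best-iterate rate, combine the approximate monotonicity $\cG(\mu_{n+1})\le\cG(\mu_n)+\epsilon_n^2/(2\tau_n)$ with Abel summation for the rate on $\cG(\mu_N)$, and conclude via lower semicontinuity and the Opial property. The only (harmless, arguably cleaner) deviations are that you obtain the distance bound by testing the already-established EVI \eqref{eq:discrete_EVI} at $\nu=\mu_{n+1}$ instead of re-running the generalized-geodesic argument of \cite[Lemma 9.2.7]{AGS08} as the paper does, and that you run the cluster-point/Opial argument directly on $\{\mu_n\}_n$ using the rate on $\cG(\mu_n)$, rather than citing \Cref{thm:JKO_convergence1}, whose part-2 summability hypothesis involves $\epsilon_{n-1}$ rather than $\epsilon_n$ and so does not literally coincide with the present assumption.
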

\begin{proof}
	Set $\cG_{{\tau_n}}(\mu):= \cG(\mu)+\frac{1}{2{\tau_n}}W_2^2(\mu,\mu_n)$. The assumption \eqref{eq:second_error_condition} then reads
	\begin{equation}\label{eq:condition2}
		\cG_{{\tau_n}}(\mu_{n+1})\leq \cG_{\tau_n}(J_{{\tau_n}} (\mu_n)) +\frac{\epsilon_n^2}{2\tau_n}.
	\end{equation}
	By \cite[Lemma 9.2.7]{AGS08}, there exists a generalized geodesic $\nu^t$ between $\mu_{n+1}$ and $J_{{\tau_n}}(\mu_n)$, with base point $\mu_n$, such that
	\[\cG_{\tau_n} (\nu^t)\leq (1-t)\cG_{\tau_n} (\mu_{n+1})+t\cG_{\tau_n} (J_{{\tau_n}}(\mu_n))-\frac{ t(1-t)}{2{\tau_n}}W_2^2(\mu_{n+1},J_{{\tau_n}}(\mu_n)).\] 
	From \eqref{eq:condition2} we derive
	\[\cG_{\tau_n} (\nu^t)\leq \cG_{\tau_n} (J_{{\tau_n}}(\mu_n))+(1-t)\frac{2\epsilon_n^2}{{\tau_n}}-\frac{t(1-t)}{2{\tau_n}}W_2^2(\mu_{n+1},J_{{\tau_n}}(\mu_n))\]
	and
	\[W_2^2(\mu_{n+1},J_{{\tau_n}}(\mu_n)) \leq 2{\tau_n} \frac{\cG_{\tau_n} (J_{{\tau_n}}(\mu_n))-\cG_{\tau_n} (\nu^t)+(1-t)\frac{\epsilon_n^2}{2\tau_n}}{t(1-t)}.\]
	Using that $\cG_{\tau_n}(J_{{\tau_n}}(\mu_n))= \inf_\mu \cG_{\tau_n}(\mu)$ and letting $t\to 1$, we obtain
	\[W_2^2(\mu_{n+1},J_{{\tau_n}}(\mu_n))\leq \epsilon_n^2 \quad \text{ and }\quad W_2(\mu_{n+1},J_{\tau}(\mu_n))\leq \epsilon_n.\]
	We can then apply \Cref{thm:JKO_convergence1} to obtain convergence of the sequence $\{\mu_n\}_n$ to a minimizer of $\cG$. In order to derive the rate on $\cG(\mu_n)$, we proceed as follows. We first recall from \eqref{eq:for_rates1} that
	\begin{equation}\label{eq:err2_for_rates1}
		2\sum_{n=0}^{N-1}\tau_n\cG(J_{\tau_n}(\mu_n))+W_2^2(\mu_N,\nu)\leq 2\sigma_N \cG(\nu)+W_2^2(\mu^0,\nu)+\sum_{n=0}^{N-1}(\tilde{\epsilon}_n-W_2^2(J_{\tau_n}(\mu_n),\mu_n)).
	\end{equation}
	and thus, assumption \eqref{eq:second_error_condition} implies
	\begin{equation}\label{eq:err2_for_rates2}
		2\sum_{n=0}^{N-1}\tau_n\cG(\mu_{n+1})+W_2^2(\mu_N,\nu)\leq 2\sigma_N \cG(\nu)+W_2^2(\mu^0,\nu)+\sum_{n=0}^{N-1}\tilde{\epsilon}_n+ \sum_{n=0}^{N-1}\epsilon_n^2,
	\end{equation}
	and point~1 follows from the same reasoning done in the proof of \Cref{thm:JKO_convergence1}. On the other hand, \eqref{eq:second_error_condition} yields
	\[\mathcal{G}(\mu_{n+1})\leq \cG(J_{{\tau_n}}(\mu_n))+\frac{1}{2\tau_n}W_2^2(J_{\tau_n}(\mu_n),\mu_n)+ \frac{\epsilon_n^2}{2\tau_n}\leq  \mathcal{G}(\mu_n)+\frac{\epsilon_n^2}{2\tau_n}.\]
	Reasoning as in the proof of \Cref{thm:JKO_convergence1}, we multiply by $\sigma_n$ and obtain
	\begin{equation}
		\begin{aligned}
			(\sigma_{n+1}-\tau_n)\cG(\mu_{n+1}) - \sigma_{n}\cG(\mu_n) \leq \frac{\sigma_n}{2\tau_n}\epsilon_{n}^2.
		\end{aligned}
	\end{equation}
	Summing from $0$ to $N-1$ and recalling that $\sigma_0=0$, we derive
	\begin{equation}
		\begin{aligned}
			\sigma_{N}\cG(\mu_N) - \sum_{n=0}^{N-1}\frac{\sigma_n}{2\tau_n}\epsilon_{n}^2 \leq \sum_{n=0}^{N-1}\tau_n \cG(\mu_{n+1}).
		\end{aligned}
	\end{equation}
	Combining, we obtain
	\begin{equation}\label{eq:err2_for_rates3}
		2\sigma_N \mathcal{G}(\mu_N)\leq 2\sigma_N \cG(\nu)+W_2^2(\mu^0,\nu)+\sum_{n=0}^{N-1}\tilde{\epsilon}_n+\sum_{n=0}^{N-1}\epsilon_n^2+\sum_{n=0}^{N-1}\frac{\sigma_n}{\tau_n}\epsilon_{n}^2.
	\end{equation}
	Using the summability conditions, we end up with
	\begin{equation}\label{eq:rate_mun}
		\cG(\mu_N) - \inf \cG \leq \frac{C}{\sigma_N}.
	\end{equation}
\end{proof}

\paragraph{Obtaining error bounds approximating the solution}
Optimization schemes that tackle problem \eqref{eq:prox_point_intro} usually make use of the Benamou-Brenier formula \cite{BB2000} to rewrite the $2$-Wasserstein distance, and solve a saddle point problem. A well-known method to approximate a solution of \eqref{eq:prox_point_intro} is the so-called ``ALG2" introduced in \cite{BCL16}. The algorithm makes use of an augmented Lagrangian method called alternating direction method
of multipliers (ADMM). The ADMM method has a long history and presents solid guarantees of convergence \cite{ADMM_powell69,ADMM_hestenes69,ADMM_gabay76,ADMM_gabay83,ADMM_fortin83,ADMM_glowinski87}. For example, in the overview of Boyd \cite{ADMM_boyd11} the convergence of the ``dual" variable is stated in \cite[Section 3.2.1]{ADMM_boyd11}. Using the notations of \cite{BCL16}, the ``dual" variable correspond to $\sigma = (\mu, m, \mu_1)$. In particular, the convergence of $\sigma_n$ to the optimal dual variable $\sigma^*$ implies the convergence of $\mu_{1,n}$ to the solution $\mu_1^*$ to problem \eqref{eq:prox_point_intro} (or \cite[Problem (1.4)]{BCL16}). It is clear that both $F$ and $G$ in \cite{BCL16} are not strongly convex, independently on the choice of the initial functional to minimize (in our notations $\cG$). Thus, it's not obvious how to guarantee linear convergence for the ``dual" variable $\sigma$. Energy convergence is also stated in \cite[Section 3.2.1]{ADMM_boyd11}, but it is not clear how to find a condition of the kind \eqref{eq:second_error_condition}. It is possible that results about convergence in energy of the closely related Douglas-Rachford method or its generalization have to be used \cite{EB92, BrediesDRS, Davis2016, Davis_rates, Bredies24}. This can be the subject of further investigation.

\bigskip

\paragraph{Obtaining error bounds modifying the problem} As discussed in the introduction, another common practice is to perturb the original problem \eqref{eq:prox_point_intro} to find in an easier way approximated solutions. In \cite{CDPS17} a perturbation of the $W_2$-distance coming from entropic optimal transport \cite{Cuturi2013} is used instead of $W_2$ in \eqref{eq:prox_point_intro}. However, the authors do not quantify the $W_2$-distance between the output of the entropic proximal step and the output of the classic proximal step, so it is not clear if it is possible to achieve one of the previous conditions, even sending the regularization parameter to zero. However, in these cases, it is probably better to try and prove some sort of convergence result for the scheme itself, without relying on how close it is to a classical JKO scheme. We do not expand on this in the current work. Other approximations of the JKO step can be found in \cite{LLO23, LLW20, LWL24}. Also here the authors do not provide any estimate about how far the approximated solutions are from the true solution and investigate the quality of the approximations only empirically.

\section{Inexact proximal-gradient algorithm}\label{sec:PG}
As anticipated in the introduction, in this section we focus on the problem
	\begin{equation}
		\label{eq:psum}
		\min_{\mu\in\cP_2(\R^d)} \cG (\mu) = \mathcal{E}_F(\mu)+\mathcal{H}(\mu)
	\end{equation}
	with $\mathcal{E}_F(\mu)= \int F \, \mathrm{d}\mu$. Our analysis is grounded on the following assumptions.
	
	\smallskip
	
	\begin{itemize}
		\item[\textbf{(A1)}]\label{A1} $F\colon\R^d\to\R$ is proper, convex, differentiable with $L$-Lipschitz continuous gradient and $\lambda$-strongly convex (with $\lambda=0$ permitted)
		\item[\textbf{(A2)}] $\mathcal{H}\colon \cP_2(\R^d)\to\R \cup \{+\infty\}$ is proper, lower semicontinuous and convex along generalized geodesics
		\item[\textbf{(A3)}] $\text{dom}(\mathcal{H})\subset \cP_2^r(\mathcal{X})$.
	\end{itemize}
	
	\smallskip
	
	Notice that, as an example, the task of minimizing the free energy functional defined by
	\[\mu \mapsto \int F(x) \, d\mu(x) + \text{Ent}(\mu),\]
	can be cast as \eqref{eq:psum} with $\mathcal{H}= \text{Ent}$, which satisfies assumption (A2)-(A3) since it is proper, lower semicontinuous, convex along generalized geodesics and by definition its domain satisfies $\text{dom}(\text{Ent})\subset \cP_2^r(\mathcal{X})$.
	
	The proximal-gradient algorithm has been studied in \cite{WPG2020}. At each iteration makes use of gradient-descent-type step for the functional $\cE_F$ and of a proximal step for $\mathcal{H}$. The resulting scheme is a more general algorithm than the proximal point method (JKO scheme), and the additional assumption (A3) is the reason why we keep separated the analysis of the two algorithms. In this section, we describe an inexact version of the method. We define the operator \[\cT_{\tau}:=J_{\tau , \calH}\circ (I-\tau \nabla F)_{\#}\] and we start by considering an inexact scheme satisfying
	\begin{equation}\label{eq:FB}
		W_2(\mu_{n+1},\cT_{\tau_n}(\mu_n))\leq \epsilon_n, \quad \text{for all } n \in N,
	\end{equation}
	with a sequence of positive stepsizes $\{\tau_n\}_{n}.$

First, analyzing \cite[Proposition 8]{WPG2020} and its proof, it is possible to show that, whenever $\tau < 1/L$, for all $\mu \in \cP_2^r(\R^d)$ and $\nu\in \cP_2(\R^d)$, it holds
\begin{equation}\label{eq:discrete_EVI_FB}
	W_2^2(\cT_{\tau}(\mu), \nu)\leq (1-\tau \lambda)W_2^2(\mu,\nu)-2\tau (\cG(\cT_{\tau}(\mu))-\cG(\nu)).
\end{equation}
It is actually possible to prove a more refined result, which will be crucial.

\begin{lemma}\label{lem:discreteEVI}
	Let $\mathcal{H} : \cP_2(\R^d) \to \mathbb{R} \ $ and $F$ satisfying (A1)-(A3). Let $\cT_{\tau}:=J_{\tau , \calH}\circ (I-\tau \nabla F)_{\#}$ with $\tau < 1/L$. Then
	\begin{equation}\label{eq:discrete_EVI_FB_complete2}
		W_2^2(\cT_\tau(\mu), \nu)\leq (1-\tau \lambda)W_2^2(\mu,\nu)-2\tau (\cG(\cT_\tau(\mu))-\cG(\nu))-(1-\tau L)W_2^2(\mu,\cT_\tau(\mu)),
	\end{equation}
	for all $\mu \in \cP_2^r(\R^d)$ and $\nu\in \cP_2(\R^d)$.
\end{lemma}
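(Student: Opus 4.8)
The plan is to revisit the derivation of the coarser estimate \eqref{eq:discrete_EVI_FB} and to retain the term that is discarded there, processing it carefully. Throughout, write $\tilde\mu := (I-\tau\nabla F)_{\#}\mu$ for the explicit gradient step and $\bar\mu := \cT_\tau(\mu) = J_{\tau,\calH}(\tilde\mu)$ for its proximal correction. The first step is to apply the \emph{exact} discrete EVI for the proximal operator of $\calH$, i.e.\ the analogue of \eqref{eq:discrete_EVI} with $\cG$ replaced by $\calH$ and input $\tilde\mu$ (this holds by the same argument as \Cref{thm:introduction}(ii) applied to $\calH$, the quadratic penalty guaranteeing the required boundedness from below). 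This keeps the extra negative term:
\[
W_2^2(\bar\mu,\nu) - W_2^2(\tilde\mu,\nu) \leq 2\tau\bigl(\calH(\nu)-\calH(\bar\mu)\bigr) - W_2^2(\bar\mu,\tilde\mu).
\]

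The second step estimates the gradient step. Since $\mu\in\cP_2^r(\R^d)$, \Cref{thm:brenier} furnishes the optimal map $T:=T_\mu^\nu$, and $(I-\tau\nabla F,\,T)_{\#}\mu$ is an admissible coupling of $\tilde\mu$ and $\nu$, so that $W_2^2(\tilde\mu,\nu)\le\int\|x-\tau\nabla F(x)-T(x)\|^2\,d\mu(x)$. Expanding the square and using $\lambda$-strong convexity of $F$ to bound the cross term $-2\tau\langle\nabla F(x),x-T(x)\rangle$, together with $\int F(T(x))\,d\mu=\cE_F(\nu)$, gives
\[
W_2^2(\tilde\mu,\nu)\le(1-\tau\lambda)W_2^2(\mu,\nu)-2\tau\bigl(\cE_F(\mu)-\cE_F(\nu)\bigr)+\tau^2\int\|\nabla F\|^2\,d\mu.
\]
Combining the two displays and recalling $\cG=\cE_F+\calH$, all the terms involving $\nu$ and $\calH(\bar\mu)$ match the target, and the claim reduces to the residual inequality
\[
\tau^2\int\|\nabla F\|^2\,d\mu - 2\tau\,\cE_F(\mu) - W_2^2(\bar\mu,\tilde\mu) \le -2\tau\,\cE_F(\bar\mu) - (1-\tau L)W_2^2(\mu,\bar\mu).
\]

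The heart of the argument is this residual inequality, and the key idea is to express $W_2^2(\bar\mu,\tilde\mu)$ \emph{exactly} rather than to bound it by a coupling (which would point the wrong way under the minus sign). Both endpoints are regular: $\bar\mu\in\dom(\calH)\subset\cP_2^r(\R^d)$ by (A3), while $\tilde\mu$ is the image of the regular measure $\mu$ under $I-\tau\nabla F$, which is injective because $\tfrac12\|\cdot\|^2-\tau F$ is strongly convex when $\tau L<1$. Hence $R:=T_{\tilde\mu}^{\bar\mu}$ is a genuine map and $P:=R\circ(I-\tau\nabla F)$ is a (generally non-optimal) transport map from $\mu$ to $\bar\mu$. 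Then $W_2^2(\bar\mu,\tilde\mu)=\int\|P(x)-(x-\tau\nabla F(x))\|^2\,d\mu(x)$ exactly, whereas $W_2^2(\mu,\bar\mu)\le\int\|P(x)-x\|^2\,d\mu(x)$ by sub-optimality of $P$. Substituting the exact expression, the $\tau^2\int\|\nabla F\|^2\,d\mu$ term cancels, and the residual inequality collapses to
\[
2\tau\int\bigl(F(P(x))-F(x)-\langle\nabla F(x),P(x)-x\rangle\bigr)\,d\mu(x) + (1-\tau L)W_2^2(\mu,\bar\mu) \le \int\|P(x)-x\|^2\,d\mu(x).
\]
The descent lemma for the $L$-smooth $F$ bounds the first integrand pointwise by $\tfrac{L}{2}\|P(x)-x\|^2$, and $(1-\tau L)W_2^2(\mu,\bar\mu)\le(1-\tau L)\int\|P(x)-x\|^2\,d\mu(x)$ since $\tau L<1$; adding the two bounds produces exactly $\int\|P(x)-x\|^2\,d\mu(x)$, which closes the estimate.

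I expect the single nontrivial step to be the treatment of the cross term $-W_2^2(\bar\mu,\tilde\mu)$: the decisive (and slightly surprising) feature is that writing it exactly through the composed map $P$ cancels the otherwise awkward $\tau^2\int\|\nabla F\|^2\,d\mu$ term, after which convexity and $L$-smoothness of $F$, the sub-optimality bound $W_2^2(\mu,\bar\mu)\le\int\|P-\cdot\|^2\,d\mu$, and the step-size restriction $\tau<1/L$ combine cleanly. The only technical points requiring care are the regularity of $\tilde\mu$ (so that $R=T_{\tilde\mu}^{\bar\mu}$ is a bona fide map) and that $\bar\mu\in\cP_2^r(\R^d)$, both of which follow directly from (A1)--(A3).
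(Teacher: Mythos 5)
Your proof is correct, but it takes a genuinely different route from the paper's. The paper does not re-derive the inequality: it opens up the proof of \cite[Proposition 8]{WPG2020}, extracts the refined estimate containing the discarded term $-(1-\tau L)\|\tau\nabla F+\tau\nabla_W\mathcal{H}(\cT_\tau(\mu))\circ X\|_\mu^2$ (with $X=T_\eta^{\bar\mu^+}\circ(I-\tau\nabla F)$, $\eta=(I-\tau\nabla F)_\#\mu$), and then uses the identity $(I+\tau\nabla_W\mathcal{H}(\bar\mu^+))\circ T_\eta^{\bar\mu^+}=I$ to rewrite that norm as $\|I-T_\eta^{\bar\mu^+}\circ(I-\tau\nabla F)\|_\mu$, which is bounded below by $W_2(\mu,\cT_\tau(\mu))$ because the composition is an admissible transport map. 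You instead rebuild everything from scratch: the metric EVI of \Cref{thm:introduction}(ii) applied to $\calH$ at $\tilde\mu$, a coupling estimate with $\lambda$-strong convexity for the forward step, and then the exact expansion of $W_2^2(\bar\mu,\tilde\mu)$ through the composed map $P=T_{\tilde\mu}^{\bar\mu}\circ(I-\tau\nabla F)$, closed by the descent lemma and sub-optimality of $P$. Notably, both proofs pivot on the very same geometric object --- the sub-optimal composed transport map from $\mu$ to $\cT_\tau(\mu)$ --- but your argument avoids the strong Fr\'echet subgradient $\nabla_W\calH$ entirely, so it is purely metric and self-contained (in effect you re-prove the relevant part of \cite[Proposition 8]{WPG2020} along the way), at the cost of being longer; the paper's proof is shorter but requires the reader to inspect an external proof. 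Two technical points you invoke are indeed the ones needing care and are handled correctly: regularity of $\tilde\mu$ follows because $(I-\tau\nabla F)^{-1}$ is the Lipschitz gradient of a convex function when $\tau<1/L$ (the same fact the paper uses in \Cref{lem:second_ingredient_PG}), and applying \Cref{thm:introduction}(ii) to $\calH$ with base point $\tilde\mu\notin\overline{D(\calH)}$ is legitimate precisely because the paper's definition of convexity along generalized geodesics allows arbitrary base points, the well-posedness of $J_{\tau,\calH}(\tilde\mu)$ being presupposed by the statement of the lemma itself.
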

\begin{proof}
	From the proof of \cite[Proposition 8]{WPG2020}, we can see that there exists a strong Fréchet subgradient of $\mathcal{H}$ at $\cT_{\tau}(\mu)$ denoted by $\nabla_W \mathcal{H} (\cT_{\tau}(\mu))$ such that
	\begin{equation*}\label{eq:discrete_EVI_FB_complete1}
		\begin{aligned}
			W_2^2(\cT_\tau(\mu), \nu)\leq (1-\tau \lambda)W_2^2(\mu,\nu)-2\tau & (\cG(\cT_\tau(\mu))-\cG(\nu))\\ &-(1-\tau L)\|\tau \nabla F + \tau \nabla_W\mathcal{H}(\cT_\tau(\mu))\circ X\|_{\mu}^2,
		\end{aligned}
	\end{equation*}
	where $X=T_{\eta}^{\bar \mu^+}\circ (I-\tau \nabla F)$ and $T_{\eta}^{\bar \mu^+}$ is the optimal transport map between $\eta:=(I-\tau \nabla F)_{\#}(\mu)$ and $\bar{\mu}^+:=J_{\tau,\mathcal{H}}(\eta)=\cT_\tau(\mu)$. On the other hand, since $\left(I+\tau \nabla_W \mathcal{H}(\cT_\tau(\mu))\right)$ is the optimal transport map between $\bar \mu^+$ and $\eta$ (see for example \cite[Lemma 3]{WPG2020} or \cite{AGS08}), we have $\left(I+\tau \nabla_W \mathcal{H}(\cT_\tau(\mu))\right)\circ T_{\eta}^{\bar \mu^+}= I$, and
	\begin{equation}
		\begin{aligned}
			\tau \nabla F + \tau \nabla_W & \mathcal{H}(\cT_\tau(\mu))\circ X\\ & = \tau \nabla F + \left(I+\tau \nabla_W \mathcal{H}(\cT_\tau(\mu))\right)\circ T_{\eta}^{\bar \mu^+}\circ (I-\tau \nabla F) - T_{\eta}^{\bar \mu^+}\circ (I-\tau \nabla F)\\
			& = \tau \nabla F + I-\tau \nabla F - T_{\eta}^{\bar \mu^+}\circ (I-\tau \nabla F) = I - T_{\eta}^{\bar \mu^+}\circ (I-\tau \nabla F).
		\end{aligned}
	\end{equation}
	By \cite[Lemma 2]{WPG2020}, $(I-\tau \nabla F)$ is the optimal transport map between $\mu$ and $\eta$ and thus $T_{\eta}^{\bar \mu^+}\circ (I-\tau \nabla F)$ is a transport map between $\mu$ and $\cT_\tau(\mu)$, which implies $\|I - T_{\eta}^{\bar \mu^+}\circ (I-\tau \nabla F)\|_{\mu}^2\geq W_2^2(\mu,\cT_\tau(\mu))$. Therefore, since $1-\tau L > 0$, we obtain \eqref{eq:discrete_EVI_FB_complete2}.
\end{proof}

In our analysis, the input $\mu$ will be only an approximation of the previous iterate, and thus, we would like to remove the condition $\mu \in \cP_2^r(\R^d)$ in the previous lemma. This becomes crucial while considering optimization schemes used to approximate the output of the JKO operator, since the approximated output is usually a discrete measure (if not parametrized otherwise). We thus state and prove now the lemma in its more general form.

\begin{lemma}\label{lem:discreteEVI2}
	Let $\mathcal{H} : \cP_2(\R^d) \to \mathbb{R} \ $ and $F$ satisfying (A1)-(A3). Let $\cT_{\tau}:=J_{\tau , \calH}\circ (I-\tau \nabla F)_{\#}$ with $\tau < 1/L$. Then
	\begin{equation}\label{eq:discrete_EVI_FB2}
		W_2^2(\cT_\tau(\mu), \nu)\leq (1-\tau \lambda)W_2^2(\mu,\nu)-2\tau (\cG(\cT_\tau(\mu))-\cG(\nu))-(1-\tau L)W_2^2(\mu,\cT_\tau(\mu)),
	\end{equation}
	for all $\mu,\nu \in \cP_2(\R^d)$.
\end{lemma}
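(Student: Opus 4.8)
The plan is to reduce the general statement to the already-established regular case of \Cref{lem:discreteEVI} by a density-and-limit argument. Since $\cP_2^r(\R^d)$ is dense in $(\cP_2(\R^d),W_2)$ — for instance one may take $\mu_k:=\mu*\mathcal{N}(0,k^{-1}I)$, which is absolutely continuous, has finite second moment, and satisfies $W_2(\mu_k,\mu)\to 0$ — I would fix an arbitrary $\mu\in\cP_2(\R^d)$, choose such a sequence $\{\mu_k\}_k\subset\cP_2^r(\R^d)$, and apply \eqref{eq:discrete_EVI_FB_complete2} to each $\mu_k$:
\[W_2^2(\cT_\tau(\mu_k),\nu)\leq (1-\tau\lambda)W_2^2(\mu_k,\nu)-2\tau(\cG(\cT_\tau(\mu_k))-\cG(\nu))-(1-\tau L)W_2^2(\mu_k,\cT_\tau(\mu_k)).\]
It then remains to pass to the limit $k\to\infty$ in each term.

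The heart of the argument is the $W_2$-continuity of the composite operator $\cT_\tau=J_{\tau,\calH}\circ(I-\tau\nabla F)_\#$. For the inner map, since $\nabla F$ is $L$-Lipschitz the map $I-\tau\nabla F$ is Lipschitz (with constant $\le 1+\tau L$), and pushing an optimal coupling of $\mu_k$ and $\mu$ through $(I-\tau\nabla F,\,I-\tau\nabla F)$ gives $W_2((I-\tau\nabla F)_\#\mu_k,(I-\tau\nabla F)_\#\mu)\to 0$. For the outer map, $J_{\tau,\calH}$ is $W_2$-continuous by the argument of \Cref{thm:introduction}(iii) applied to $\calH$. Composing, $\cT_\tau(\mu_k)\to\cT_\tau(\mu)$ in $W_2$. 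Consequently the left-hand side and the first and last terms on the right converge to their counterparts at $\mu$, by continuity of $W_2$.

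The remaining term is $-2\tau\cG(\cT_\tau(\mu_k))$, which carries a negative sign, so I only need a lower bound on $\liminf_k\cG(\cT_\tau(\mu_k))$. Writing $\cG=\cE_F+\calH$, the potential part $\cE_F$ is $W_2$-continuous because $F$ has at most quadratic growth and $W_2$-convergence forces narrow convergence together with convergence of second moments, while $\calH$ is lower semicontinuous by (A2). Hence $\cG$ is $W_2$-lower semicontinuous, and since $\cT_\tau(\mu_k)\to\cT_\tau(\mu)$ in $W_2$ (a fortiori narrowly, cf.\ \Cref{thm:lsc_wlsc}) we obtain $\cG(\cT_\tau(\mu))\leq\liminf_k\cG(\cT_\tau(\mu_k))$, that is $\limsup_k\bigl(-2\tau\cG(\cT_\tau(\mu_k))\bigr)\leq -2\tau\cG(\cT_\tau(\mu))$. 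Taking $\limsup_k$ of the displayed inequality and combining these limits yields exactly \eqref{eq:discrete_EVI_FB2}.

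I expect the main obstacle to be justifying the $W_2$-continuity of $J_{\tau,\calH}$ rigorously in the present setting: \Cref{thm:introduction}(iii) is stated under $\argmin\cG\neq\emptyset$, which need not hold for $\calH$ (for example, the entropy is unbounded below on $\cP_2(\R^d)$). One must therefore verify that the Moreau--Yosida functional $\nu\mapsto\calH(\nu)+\frac{1}{2\tau}W_2^2(\nu,\cdot)$ is still bounded below and coercive — which follows from the Gaussian lower bound $\calH(\nu)\geq -c(1+\int\|x\|^2\,d\nu)$ controlling the entropy by the second moment, the latter being in turn controlled by the quadratic term — so that the well-posedness and continuity proof of \Cref{thm:introduction}(iii) goes through with $\cG$ replaced by $\calH$.
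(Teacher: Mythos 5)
Your proof follows essentially the same route as the paper's: approximate $\mu$ by convolution-smoothed regular measures, apply \Cref{lem:discreteEVI} to each approximant, use the Lipschitz continuity of $I-\tau\nabla F$ together with the continuity of $J_{\tau,\calH}$ from \Cref{thm:introduction}(iii) to get $W_2(\cT_\tau(\mu_k),\cT_\tau(\mu))\to 0$, and conclude by lower semicontinuity of $\cG$ in the limit passage. Your closing observation is a real subtlety the paper glosses over — \Cref{thm:introduction}(iii) is stated under $\argmin\cG\neq\emptyset$, which assumption (A2) does not guarantee for $\calH$ — and your proposed remedy (checking that the Moreau--Yosida functional of $\calH$ remains bounded below and coercive, so the well-posedness and continuity arguments go through) is the correct way to justify that citation.
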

\begin{proof}
	Let $\mu,\nu\in \cP_2(\R^d)$. We define $\{\mu_n^r\}_n$ as a sequence of regular measures such that $W_2(\mu_n^r,\mu)\to 0$, which always exists (for example smoothing with a convolution). Then, the continuity of $J_{\tau,\mathcal{H}}$ (see \Cref{thm:introduction} (iii)) and the continuity of $(I-\tau \nabla F)_\#$ (which follows by the fact that the operator $I-\tau \nabla F$ is Lipschitz) implies that $W_2(\cT_{\tau}(\mu_n^r),\cT_{\tau}(\mu))\to 0$. We have both
	\[|W_2(\cT_\tau(\mu_n^r),\nu)-W_2(\cT_\tau(\mu),\nu)|\leq W_2(\cT_\tau(\mu_n^r),\cT_\tau(\mu))\to 0,\] and \[|W_2(\mu_n^r,\cT_\tau(\mu_n^r))-W_2(\mu,\cT_\tau(\mu))|\leq W_2(\mu_n^r,\mu)+W_2(\cT_\tau(\mu),\cT_\tau(\mu_n^r))\to 0.\]
	By \Cref{lem:discreteEVI} we have
	\begin{equation*}
		W_2^2(\cT_\tau(\mu_n^r), \nu)\leq (1-\tau \lambda)W_2^2(\mu_n^r,\nu)-2\tau (\cG(\cT_\tau(\mu_n^r))-\cG(\nu))-(1-\tau L)W_2^2(\mu_n^r,\cT_\tau(\mu_n^r)),
	\end{equation*}
	and, since $\cG$ is lower semicontinuous, we can pass to the limit and obtain \eqref{eq:discrete_EVI_FB2}.
\end{proof}

\begin{remark}[Discrete EVI]\label{rem:discreteEVI}
	Using the previous lemma, we can prove a new EVI inequality for the sequence $\{\mu_n\}_n$ generated as described in \eqref{eq:FB}. In fact, for all $\nu \in \cP_2(\R^d)$ it holds
	\begin{equation}\label{eq:discrete_EVI_FB_true}
		\begin{aligned}
			W_2^2(\cT_{\tau_n}(\mu_{n}), \nu)\leq (1-\tau_n \lambda)W_2^2(\mu_n,\nu)-& 2\tau_n (\cG(\cT_{\tau_n}(\mu_{n}))-\cG(\nu))\\
			&-(1-\tau_n L)W_2^2(\mu_n,\cT_{\tau_n}(\mu_{n})).
		\end{aligned}
	\end{equation}
	This extends the result \cite[Proposition 8]{WPG2020}. This finer EVI inequality, coming from inequality \eqref{eq:discrete_EVI_FB2}, is the one we need for our analysis.
\end{remark}

\begin{lemma}
	Suppose that $\{\epsilon_n\}_n$ is a positive summable sequence and the sequence $\{\tau_n\}_n \subset \left(0,\frac{1}{L}\right)$ satisfies $\sup_i \tau_i < \frac{1}{L}$. Then for every $\nu \in \argmin \cG$ the sequences $\{W_2(\mu_n,\nu)\}_n$ and $\{W_2(\cT_{\tau_n}(\mu_n),\nu)\}_n$ converge and there exists a constant $C>0$ such that
	\begin{equation}\label{eq:square_inequality_PG}
		W_2^2(\mu_{n+1},\nu)\leq W_2^2(\cT_{\tau_n}(\mu_n),\nu)+C\epsilon_n.
	\end{equation}
	It also holds $W_2(\cT_{\tau_n}(\mu_n),\mu_n)\to 0$, as $n\to +\infty$.
\end{lemma}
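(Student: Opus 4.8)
The plan is to follow the template of \Cref{lem:bounded_sequences} and \Cref{lem:JKO_asymptotic_regularity}, substituting the JKO discrete EVI \eqref{eq:discrete_EVI} with the refined proximal-gradient EVI \eqref{eq:discrete_EVI_FB_true}. Fixing $\nu\in\argmin\cG$, I would first note that $\cG(\cT_{\tau_n}(\mu_n))-\cG(\nu)\geq 0$, that $\lambda\geq 0$ forces $(1-\tau_n\lambda)\leq 1$, and that $\tau_n<1/L$ forces $1-\tau_n L>0$; plugging these into \eqref{eq:discrete_EVI_FB_true} gives the one-step estimate
\[
W_2^2(\cT_{\tau_n}(\mu_n),\nu)\leq W_2^2(\mu_n,\nu)-(1-\tau_n L)\,W_2^2(\mu_n,\cT_{\tau_n}(\mu_n))\leq W_2^2(\mu_n,\nu).
\]
Together with the triangle inequality and the inexactness bound \eqref{eq:FB}, this yields $W_2(\mu_{n+1},\nu)\leq W_2(\cT_{\tau_n}(\mu_n),\nu)+\epsilon_n\leq W_2(\mu_n,\nu)+\epsilon_n$, so $\{W_2(\mu_n,\nu)\}_n$ is quasi-Fej\'er monotone and, since $\sum_n\epsilon_n<\infty$, converges. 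The chain $W_2(\cT_{\tau_n}(\mu_n),\nu)\leq W_2(\mu_n,\nu)\leq W_2(\cT_{\tau_{n-1}}(\mu_{n-1}),\nu)+\epsilon_{n-1}$ gives the same property for $\{W_2(\cT_{\tau_n}(\mu_n),\nu)\}_n$, hence its convergence.

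Convergence then provides a uniform bound $c$ with $W_2(\mu_n,\nu)\leq c$ and $W_2(\cT_{\tau_n}(\mu_n),\nu)\leq c$. Squaring the inexactness estimate exactly as in \Cref{lem:bounded_sequences},
\[
W_2^2(\mu_{n+1},\nu)\leq\big(W_2(\cT_{\tau_n}(\mu_n),\nu)+\epsilon_n\big)^2\leq W_2^2(\cT_{\tau_n}(\mu_n),\nu)+2c\epsilon_n+\epsilon_n^2,
\]
and absorbing $\epsilon_n^2$ into a term $C\epsilon_n$ (using that $\{\epsilon_n\}_n$ is bounded) proves \eqref{eq:square_inequality_PG}.

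For the asymptotic regularity I would combine \eqref{eq:square_inequality_PG} with the first displayed contraction to obtain $(1-\tau_n L)\,W_2^2(\mu_n,\cT_{\tau_n}(\mu_n))\leq W_2^2(\mu_n,\nu)-W_2^2(\mu_{n+1},\nu)+C\epsilon_n$, and then sum over $n$, so that the right-hand side telescopes to a finite quantity plus $C\sum_n\epsilon_n<\infty$. The decisive point, and the only genuinely new obstacle relative to the JKO analysis, is the coefficient $1-\tau_n L$: the hypothesis $\sup_i\tau_i<1/L$ (strictly stronger than $\tau_n<1/L$ for each $n$) guarantees a uniform gap $\delta:=1-(\sup_i\tau_i)L>0$, so that $\delta\sum_n W_2^2(\mu_n,\cT_{\tau_n}(\mu_n))$ stays finite; hence $\sum_n W_2^2(\mu_n,\cT_{\tau_n}(\mu_n))<\infty$ and in particular $W_2(\cT_{\tau_n}(\mu_n),\mu_n)\to 0$. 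Apart from this uniform-gap issue and the harmless bookkeeping of the $(1-\tau_n\lambda)$ factor via $\lambda\geq 0$, the whole argument is essentially a transcription of the distance-type JKO proof.
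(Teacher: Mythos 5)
Your proposal is correct and follows essentially the same route as the paper: the paper proves the first part by declaring it "similar to the proof of the distance-type JKO lemma" (which you simply spell out, using the refined EVI \eqref{eq:discrete_EVI_FB_true} in place of \eqref{eq:discrete_EVI} to get the one-step contraction), and then, exactly as you do, it combines \eqref{eq:square_inequality_PG} with \eqref{eq:discrete_EVI_FB_true}, telescopes, and invokes $\sup_i \tau_i < \frac{1}{L}$ to extract the uniform gap $1-\tau_n L \geq \delta > 0$ and conclude $W_2(\cT_{\tau_n}(\mu_n),\mu_n)\to 0$. You also correctly identified the uniform-gap condition as the only genuinely new ingredient relative to the JKO analysis, which is precisely how the paper uses that hypothesis.
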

\begin{proof}
	The first part of the proof is similar to the proof of \Cref{lem:bounded_sequences}, and therefore omitted.\\
	Combining \eqref{eq:square_inequality_PG} and the discrete EVI inequality \eqref{eq:discrete_EVI_FB_true}, we obtain
	\begin{equation*}
		W_2^2(\mu_{n+1},\nu) - W_2^2(\mu_n,\nu) \leq - (1-\tau_n L) W_2^2(\cT_{\tau_n}(\mu_n),\mu_n) +\tilde{\epsilon}_n,
	\end{equation*}
	for all $\nu \in \argmin \cG$, where $\tilde{\epsilon}_n := C \epsilon_n$, for all $n\in \N$, with $C$ the constant in \eqref{eq:square_inequality_PG}. Since $\sum_n \tilde{\epsilon}_n = C \cdot \sum_n \epsilon_n <+\infty$ we obtain by summing up
	\begin{equation}
		\sum_n (1-\tau_nL)W_2^2(\cT_{\tau_n}(\mu_n),\mu_n)<+\infty.
	\end{equation}
	Since by hypothesis we have $\sup_i \tau_i < \frac{1}{L}$, it holds in particular $W_2(\cT_{\tau_n}(\mu_n),\mu_n)\to 0$.
\end{proof}

From now on, we will use the notation $\tilde{\epsilon}_n := C \epsilon_n$, for all $n\in \N$, with $C$ the constant given by the previous lemma.

\begin{lemma}\label{lem:second_ingredient_PG}
	Let $\mu \in \cP_2(\R^d)$, $\eta := (I-\tau \nabla F)_{\#}\mu$ and $\bar{\mu}^+:=J_{\tau,\mathcal{H}}(\eta)$, with $\tau < \frac{1}{L}$. Then, for every $\bar \mu \in \cP_2^r(\R^d)$ with $W_2(\bar \mu, \mu)\leq \epsilon$,  we have that
	\begin{equation}\label{eq:lem_second_ingredient_PG}
		\mathcal{G}(\bar \mu^+) - \mathcal{G}(\bar \mu) \leq  \frac{\epsilon}{\tau} (W_2(\bar \mu, \eta)+W_2(\bar \mu^+, \eta)+\epsilon)
	\end{equation}
\end{lemma}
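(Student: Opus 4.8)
The plan is to split $\cG=\mathcal{E}_F+\mathcal{H}$ and control the two pieces by different tools, so that the distance-to-$\eta$ terms emerge naturally. I may assume $\mathcal{H}(\bar\mu)<+\infty$, since otherwise $\cG(\bar\mu^+)-\cG(\bar\mu)=-\infty$ and the claim is trivial. For the nonsmooth part I would use only the defining optimality of $\bar\mu^+=J_{\tau,\mathcal{H}}(\eta)$ as the minimizer of $\nu\mapsto\mathcal{H}(\nu)+\tfrac{1}{2\tau}W_2^2(\nu,\eta)$: testing this minimality against the competitor $\bar\mu$ gives $\mathcal{H}(\bar\mu^+)-\mathcal{H}(\bar\mu)\le\tfrac{1}{2\tau}\bigl(W_2^2(\bar\mu,\eta)-W_2^2(\bar\mu^+,\eta)\bigr)$. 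Factoring the right-hand side as a difference of squares already produces the factor $W_2(\bar\mu,\eta)+W_2(\bar\mu^+,\eta)$ of the statement, the remaining factor $W_2(\bar\mu,\eta)-W_2(\bar\mu^+,\eta)$ being bounded, by the triangle inequality, by $W_2(\bar\mu,\bar\mu^+)\le W_2(\bar\mu,\mu)+W_2(\mu,\bar\mu^+)\le\epsilon+W_2(\mu,\bar\mu^+)$.

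For the smooth part I would estimate $\mathcal{E}_F(\bar\mu^+)-\mathcal{E}_F(\bar\mu)$ by convexity of $F$, writing $\mathcal{E}_F(\bar\mu^+)-\mathcal{E}_F(\bar\mu)\le\langle\nabla F,\,I-T_{\bar\mu^+}^{\bar\mu}\rangle_{\bar\mu^+}$ (note $\bar\mu^+\in\mathrm{dom}(\mathcal{H})\subset\cP_2^r(\R^d)$ by (A3), so the optimal map $T_{\bar\mu^+}^{\bar\mu}$ exists) and using Cauchy--Schwarz together with $\|I-T_{\bar\mu^+}^{\bar\mu}\|_{\bar\mu^+}=W_2(\bar\mu^+,\bar\mu)$. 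The identity $\eta=(I-\tau\nabla F)_{\#}\mu$ is what converts $\nabla F$ into $\eta$-displacements: it lets me read $\tau\nabla F$ as a transport realizing the displacement to $\eta$, so that the $\nabla F$-norm is measured, up to the $\epsilon$-perturbation $W_2(\bar\mu,\mu)$, by $\tfrac1\tau W_2(\bar\mu^+,\eta)$. Collecting the two parts and using $W_2(\mu,\bar\mu^+)\le W_2(\mu,\eta)+W_2(\eta,\bar\mu^+)$ together with the nonexpansiveness of $I-\tau\nabla F$ (valid since $\tau<1/L$ and $\lambda\ge0$ force its spectrum into $[0,1]$) reorganizes everything into $\tfrac{\epsilon}{\tau}\bigl(W_2(\bar\mu,\eta)+W_2(\bar\mu^+,\eta)+\epsilon\bigr)$.

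The delicate point—really the only obstacle—is the $\mathcal{E}_F$-term: since $F$ is merely $L$-smooth and not globally Lipschitz, $\|\nabla F\|_{\bar\mu^+}$ is a priori unbounded, and the argument hinges on trading it for the geometric quantity $\tfrac1\tau W_2(\bar\mu^+,\eta)$ through the forward-step identity rather than estimating it crudely. A cleaner alternative, which I would actually prefer, bypasses the splitting entirely: since $\bar\mu^+=\cT_{\tau}(\mu)$, the finer discrete EVI of \Cref{lem:discreteEVI2} applies with input $\mu$ and comparison point $\nu=\bar\mu$, giving $2\tau\bigl(\cG(\bar\mu^+)-\cG(\bar\mu)\bigr)\le(1-\tau\lambda)W_2^2(\mu,\bar\mu)-W_2^2(\bar\mu^+,\bar\mu)-(1-\tau L)W_2^2(\mu,\bar\mu^+)$. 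Dropping the two nonpositive terms (here $\tau<1/L$ and $\lambda\ge0$ are used) and bounding $W_2^2(\mu,\bar\mu)\le\epsilon^2$ yields $\cG(\bar\mu^+)-\cG(\bar\mu)\le\tfrac{\epsilon^2}{2\tau}$, which is already dominated by the stated bound since its right-hand side is at least $\tfrac{\epsilon^2}{\tau}$; this route also transparently accommodates an irregular $\mu$, for which $\cG(\mu)$ may be $+\infty$.
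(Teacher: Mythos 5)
Your proof is correct, but it takes a genuinely different route from the paper's. The paper proves the lemma ``from scratch'' by splitting $\cG=\mathcal{E}_F+\mathcal{H}$: it tests the proximal optimality of $\bar\mu^+$ against $\bar\mu$, introduces the auxiliary regular measure $\bar\eta:=(I-\tau\nabla F)_{\#}\bar\mu$ (replacing $\eta$ by $\bar\eta$ at cost $C_\delta\delta/\tau$ with $\delta=\epsilon$, using nonexpansiveness of $I-\tau\nabla F$), estimates the $\mathcal{E}_F$-difference via the descent lemma along transport maps with common base $\bar\eta$, and then makes the dangerous cross term vanish exactly: by \Cref{thm:brenier}(ii), $T_{\bar\eta}^{\bar\mu}=(I-\tau\nabla F)^{-1}$, so $z-(I-\tau\nabla F)(T_{\bar\eta}^{\bar\mu}(z))=0$ for $\bar\eta$-a.e.\ $z$, while the residual quadratic term is nonpositive because $\tau<1/L$. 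Your argument instead invokes \Cref{lem:discreteEVI2} --- legitimately, since its proof is independent of the present lemma, so there is no circularity --- with input $\mu$ and comparison point $\nu=\bar\mu$, discards the two nonpositive terms, and concludes $\cG(\bar\mu^+)-\cG(\bar\mu)\le\tfrac{1}{2\tau}W_2^2(\mu,\bar\mu)\le\tfrac{\epsilon^2}{2\tau}$, which indeed implies \eqref{eq:lem_second_ingredient_PG}; the degenerate case $\cG(\bar\mu)=+\infty$ is trivial, as you note. This is shorter and in fact \emph{stronger}: an $O(\epsilon^2/\tau)$ bound in place of $O(\epsilon/\tau)$ would propagate into point~2 of \Cref{thm:PG_convergence1}, allowing the hypothesis $\sum_n\frac{\sigma_n}{\tau_n}\epsilon_{n-1}<\infty$ to be weakened to $\sum_n\frac{\sigma_n}{\tau_n}\epsilon_{n-1}^2<\infty$, in line with the JKO result of \Cref{thm:JKO_convergence1}. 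One caveat: your first, discarded sketch is not salvageable as written --- the proposed trade of $\|\nabla F\|_{\bar\mu^+}$ for $\tfrac{1}{\tau}W_2(\bar\mu^+,\eta)$ is unjustified, since by the optimality condition of the proximal step $\tfrac{1}{\tau}W_2(\bar\mu^+,\eta)$ equals the norm of the $\mathcal{H}$-subgradient at $\bar\mu^+$, not any quantity involving $\nabla F$ (take $\mathcal{H}$ nearly constant: then $\bar\mu^+\approx\eta$ yet $\nabla F\neq 0$); the exact cancellation the paper engineers through the choice of $\bar\eta$ is precisely what that splitting strategy requires. Since you explicitly prefer the EVI route, this does not affect the validity of your proof.
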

\begin{proof}
	Let $\bar \mu \in \cP_2^r(\R^d)$ such that $W_2(\bar \mu, \mu)\leq \epsilon$. For every $\bar \eta\in \cP_2^r(\R^d)$ such that $W_2(\bar \eta, \eta)\leq \delta$, we have that
	\begin{equation*}
		\begin{aligned}
			\mathcal{H}(\bar{\mu}^+)-\mathcal{H}(\bar{\mu}) & \leq \frac{1}{2\tau}W_2^2(\bar \mu,\eta)-\frac{1}{2\tau}W_2^2(\bar \mu^+,\eta)\\
			& \leq \frac{1}{2\tau}W_2^2(\bar \mu,\bar \eta) - \frac{1}{2\tau} W_2^2(\bar \mu^+, \bar \eta) + C_\delta\frac{\delta}{\tau}\\
			& = \frac{1}{2\tau}\int \|T_{\bar \eta}^{\bar \mu}(z) - z\|^2 - \|T_{\bar \eta}^{\bar \mu^+}(z) - z\|^2 \, d\bar \eta (z) + C_\delta \frac{\delta}{\tau}\\
			& = \frac{1}{2\tau}\int \|T_{\bar \eta}^{\bar \mu}(z)\|^2 - \|T_{\bar \eta}^{\bar \mu^+}(z)\|^2 + 2\langle z, T_{\bar \eta}^{\bar \mu^+}(z)-T_{\bar \eta}^{\bar \mu}(z) \rangle \, d\bar \eta (z) + C_\delta \frac{\delta}{\tau},
		\end{aligned}
	\end{equation*}
	where $C_\delta$ can be choosen as $C_\delta=W_2(\bar \mu, \eta)+W_2(\bar \mu^+,\eta)+\delta$. On the other hand
	\begin{equation*}
		\begin{aligned}
			\mathcal{E}_F(\bar{\mu}^+)-\mathcal{E}_F(\bar{\mu})& = \int  F(T_{\bar \eta}^{\bar \mu^+}(z))-F(T_{\bar \eta}^{\bar \mu}(z)) \, d \bar \eta (z) \\
			& \hspace{-1cm}\leq \int \langle \nabla F(T_{\bar \eta}^{\bar \mu}(z)), T_{\bar \eta}^{\bar \mu^+}(z) - T_{\bar \eta}^{\bar \mu}(z) \rangle + \frac{L}{2} \|T_{\bar \eta}^{\bar \mu^+}(z)-T_{\bar \eta}^{\bar \mu}(z)\|^2 \, d \bar \eta (z)\\
			& \hspace{-1cm} = \frac{1}{2\tau} \int \bigg( - 2 \langle (I-\tau \nabla F)(T_{\bar \eta}^{\bar \mu}(z)), T_{\bar \eta}^{\bar \mu^+}(z) - T_{\bar \eta}^{\bar \mu}(z) \rangle \\ 
			& \hspace{1cm} - 2 \langle T_{\bar \eta}^{\bar \mu}(z), T_{\bar \eta}^{\bar \mu}(z) - T_{\bar \eta}^{\bar \mu^+}(z) \rangle + \tau L \|T_{\bar \eta}^{\bar \mu^+}(z)-T_{\bar \eta}^{\bar \mu}(z)\|^2 \bigg)\, d \bar \eta (z).
		\end{aligned}
	\end{equation*}
	Putting all together leads to
	\begin{equation}\label{eq:Gbarmu+-Gbarmu}
		\begin{aligned}
			\mathcal{G}(\bar \mu^+) - \mathcal{G}(\bar \mu) & \leq \frac{1}{2\tau} \int \bigg(2 \langle z -(I-\tau \nabla F)(T_{\bar \eta}^{\bar \mu}(z)),  T_{\bar \eta}^{\bar \mu^+}(z)-T_{\bar \eta}^{\bar \mu}(z) \rangle \\
			& \hspace{3.7cm} - (1-\tau L)\|T_{\bar \eta}^{\bar \mu^+}(z)-T_{\bar \eta}^{\bar \mu}(z)\|^2\bigg) \, d \bar \eta (z) + C_\delta\frac{\delta}{\tau}
		\end{aligned}
	\end{equation}
	Since $\bar \eta$ was arbitrary, we can actually choose $\bar \eta = (I-\tau \nabla F)_{\#} \bar \mu$. With this choice, using the fact that the map $I-\tau \nabla F$ is nonexpansive, we have (see for example \cite[Proposition 4.2]{Berdellima2025})
	\[W_2(\bar \eta , \eta)=W_2((I-\tau \nabla F)_{\#} \bar \mu , (I-\tau \nabla F)_{\#} \mu)\leq W_2(\bar \mu , \mu)\leq \epsilon,\]
	and we can set $\delta = \epsilon$. Defining $\phi = \frac{1}{2}\|\cdot\|^2-\tau F$, we have $\nabla \phi = I-\tau \nabla F$ and since $\nabla F$ is $L$-Lipschitz and $\tau < \frac{1}{L}$, then $\phi = \frac{1}{2}\|\cdot\|^2-\tau F$ is strongly convex. This implies that $(I-\tau \nabla F)^{-1}$ is the gradient of a convex function and it is Lipschitz continuous (and thus also in $L^2(\bar \eta)$). We can therefore apply \Cref{thm:brenier} (ii) and obtain $T_{\bar \eta}^{\bar \mu}=(I-\tau \nabla F)^{-1}$ and $ z -(I-\tau \nabla F)(T_{\bar \eta}^{\bar \mu}(z))=0$ for $\bar \eta$-almost every $z$. 
	
	Finally, \eqref{eq:Gbarmu+-Gbarmu} yields
	\begin{equation}
		\mathcal{G}(\bar \mu^+) - \mathcal{G}(\bar \mu) \leq 0-\frac{1-\tau L}{2\tau} \int \|T_{\bar \eta}^{\bar \mu^+}(z)-T_{\bar \eta}^{\bar \mu}(z)\|^2 \, d \bar \eta (z)+ C_\epsilon \frac{\epsilon}{\tau} \leq C_\epsilon \frac{\epsilon}{\tau}.
	\end{equation}
\end{proof}

\begin{theorem}\label{thm:PG_convergence1}
	Let $\mathcal{H} : \cP_2(\R^d) \to \mathbb{R} \ $ and $F$ satisfying (A1)-(A3) and suppose $\argmin \cG \neq \emptyset$. Let $\{\epsilon_n\}_n\subset \mathbb{R}_{\geq 0}$ with $\sum_{n=0}^{\infty} \epsilon_n < \infty$, $\{\tau_n\}_n \subset \left(0,\frac{1}{L}\right)$ with $\sum_{i=0}^{\infty}\tau_i= \infty$, $\sup_i \tau_i < \frac{1}{L}$ and let $\sigma_n:= \sum_{i=0}^{n-1}\tau_i$, for $n\in\mathbb{N}$. Let $\{\mu_n\}_n$ satisfying
	\[W_2(\mu_{n+1},\cT_{\tau_n}(\mu_n))\leq \epsilon_n, \quad \text{for all } n \in \N.\]
	\begin{enumerate}
		\item Then, we have
		\begin{equation}\label{eq:rate_PG_bar1}
			\cG(\bar \beta_n)-\inf \cG = O\left(\frac{1}{\sigma_n}\right), \quad \text{as } n\to \infty,
		\end{equation}
		where $\bar \beta_n := \cT_{\tau_{j_n}}(\mu_{j_n})$ with $j_n = \argmin_{i=0,\dots,n-1}\{\cG(\cT_{\tau_i}(\mu_i))\}$, defines the sequence of the best iterates.
		\item If $\sum_{n=0}^{\infty}\frac{\sigma_n}{\tau_n}\epsilon_{n-1}  < \infty$, then \[\cG(\cT_{\tau_n}(\mu_n))-\inf \cG = O\left(\frac{1}{\sigma_n}\right), \quad \text{for } n\to \infty,\] and $\{\mu_n\}_n$ converges with respect to the topology $\tau_{w,2}$ to some $\mu^*\in \argmin \cG$. In particular we have $W_p(\mu_n,\mu^*)\to 0$ for all $p\in[1,2)$.
	\end{enumerate}
\end{theorem}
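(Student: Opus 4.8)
The plan is to transpose the architecture of the proof of \Cref{thm:JKO_convergence1} to the forward--backward setting, using the operator $\cT_{\tau_n}$ in place of $J_{\tau_n}$ and the refined discrete EVI of \Cref{rem:discreteEVI} in place of \eqref{eq:inexact_discrete_EVI}. Combining the inexact EVI \eqref{eq:discrete_EVI_FB_true} with the quasi-Fejér estimate \eqref{eq:square_inequality_PG}, for every $\nu \in \argmin \cG$ one gets
\[
W_2^2(\mu_{n+1},\nu) - W_2^2(\mu_n,\nu) \leq -\tau_n\lambda W_2^2(\mu_n,\nu) - 2\tau_n(\cG(\cT_{\tau_n}(\mu_n)) - \inf\cG) - (1-\tau_n L)W_2^2(\mu_n,\cT_{\tau_n}(\mu_n)) + \tilde\epsilon_n .
\]
Since $\lambda\geq 0$ and $\tau_n<1/L$, the first and third terms on the right are nonpositive and may be discarded; summing from $0$ to $N-1$ produces, exactly as in \eqref{eq:for_rates1}, the ergodic bound $2\sum_{n=0}^{N-1}\tau_n\cG(\cT_{\tau_n}(\mu_n)) + W_2^2(\mu_N,\nu) \leq 2\sigma_N\inf\cG + W_2^2(\mu_0,\nu) + \sum_{n=0}^{N-1}\tilde\epsilon_n$. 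Part~1 is then immediate: $\cG(\bar\beta_N)$ is the minimum of $\{\cG(\cT_{\tau_i}(\mu_i))\}_{i<N}$, hence is bounded by the $\tau_i/\sigma_N$-weighted average $\frac{1}{\sigma_N}\sum_{n=0}^{N-1}\tau_n\cG(\cT_{\tau_n}(\mu_n))$, giving $\cG(\bar\beta_N)-\inf\cG\leq C/\sigma_N$.

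For Part~2 the decisive new ingredient is a descent estimate for the genuine iterates $\cG(\cT_{\tau_n}(\mu_n))$, which in the JKO case came for free from the optimality of $J_{\tau_n}(\mu_n)$ but is unavailable here. I would supply it through \Cref{lem:second_ingredient_PG}, applied at step $n$ with $\mu=\mu_n$, $\eta_n=(I-\tau_n\nabla F)_\#\mu_n$, $\bar\mu^+=\cT_{\tau_n}(\mu_n)$, and comparison point $\bar\mu=\cT_{\tau_{n-1}}(\mu_{n-1})$. This choice is admissible because $\cT_{\tau_{n-1}}(\mu_{n-1})\in\dom(\mathcal{H})\subset\cP_2^r(\R^d)$ by (A3), and $W_2(\cT_{\tau_{n-1}}(\mu_{n-1}),\mu_n)\leq \epsilon_{n-1}$ by \eqref{eq:FB}, so the lemma yields
\[
\cG(\cT_{\tau_n}(\mu_n)) - \cG(\cT_{\tau_{n-1}}(\mu_{n-1})) \leq \frac{\epsilon_{n-1}}{\tau_n}\left(W_2(\cT_{\tau_{n-1}}(\mu_{n-1}),\eta_n) + W_2(\cT_{\tau_n}(\mu_n),\eta_n) + \epsilon_{n-1}\right).
\]
To reduce this to $\cG(\cT_{\tau_n}(\mu_n))\leq \cG(\cT_{\tau_{n-1}}(\mu_{n-1})) + C\epsilon_{n-1}/\tau_n$ I would bound the two Wasserstein distances by a uniform constant: $\{\mu_n\}_n$ and $\{\cT_{\tau_n}(\mu_n)\}_n$ are $W_2$-bounded by the quasi-Fejér lemma, and $\{\eta_n\}_n$ is bounded as well because $I-\tau_n\nabla F$ has at most linear growth (from $L$-Lipschitzness of $\nabla F$ and $\tau_n<1/L$), so all three families lie in a common $W_2$-ball.

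With this decrease in hand, the computation is exactly that of \eqref{eq:decrease_JKO}--\eqref{eq:rate_Jmun}: writing $a_n=\cG(\cT_{\tau_n}(\mu_n))$, a summation by parts built on $\tau_n=\sigma_{n+1}-\sigma_n$ converts $a_n-a_{n-1}\leq C\epsilon_{n-1}/\tau_n$ into
\[
\sigma_N a_{N-1} - C\sum_{n=1}^{N-1}\frac{\sigma_n}{\tau_n}\epsilon_{n-1} \leq \sum_{n=0}^{N-1}\tau_n a_n ,
\]
and combining with the ergodic bound gives $2\sigma_N(a_{N-1}-\inf\cG)\leq W_2^2(\mu_0,\nu)+\sum_n\tilde\epsilon_n+2C\sum_n\frac{\sigma_n}{\tau_n}\epsilon_{n-1}$, finite by the Part~2 hypothesis; note that the error now enters linearly, $\epsilon_{n-1}/\tau_n$, rather than quadratically as in \Cref{thm:JKO_convergence1}, which is precisely why the summability requirement carries no square. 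This yields $\cG(\cT_{\tau_n}(\mu_n))-\inf\cG=O(1/\sigma_n)$. For the weak convergence I would then repeat the Opial argument of \Cref{thm:JKO_convergence1} verbatim: boundedness gives a $\tau_{w,2}$-cluster point of $\{\cT_{\tau_n}(\mu_n)\}_n$; sequential lower semicontinuity of $\cG$ (\Cref{thm:lsc_wlsc}) together with $\cG(\cT_{\tau_n}(\mu_n))\to\inf\cG$ forces every cluster point into $\argmin\cG$; the Opial property (\Cref{thm:Opial}) applied to $\ell(\nu):=\lim_n W_2(\cT_{\tau_n}(\mu_n),\nu)$ excludes two distinct cluster points, so $\cT_{\tau_n}(\mu_n)\wsto\mu^*$; and since $W_2(\cT_{\tau_n}(\mu_n),\mu_n)\to 0$, \Cref{lem:topology} transfers this to $\mu_n\wsto\mu^*$, whence $W_p(\mu_n,\mu^*)\to 0$ for $p\in[1,2)$.

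The main obstacle is exactly the descent estimate for $\{\cG(\cT_{\tau_n}(\mu_n))\}_n$: because $\cT_{\tau_n}$ is a forward--backward step and not a proximal step of $\cG$, it admits no variational characterization, so the monotonicity has to be extracted from the transport-map computation encapsulated in \Cref{lem:second_ingredient_PG}; the care required is in verifying its regularity hypothesis (provided by (A3)) and in securing the uniform bounds on the auxiliary measures $\eta_n$ that make the error term $O(\epsilon_{n-1}/\tau_n)$.
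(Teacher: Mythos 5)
Your proposal is correct and follows essentially the same route as the paper's proof: the same two ingredients (the summed inexact EVI \eqref{eq:discrete_EVI_FB_true} combined with \eqref{eq:square_inequality_PG}, and the descent estimate obtained from \Cref{lem:second_ingredient_PG} with the identical choice $\mu=\mu_n$, $\bar\mu^+=\cT_{\tau_n}(\mu_n)$, $\bar\mu=\cT_{\tau_{n-1}}(\mu_{n-1})$), followed by the summation-by-parts and Opial arguments of \Cref{thm:JKO_convergence1}. In fact you supply details the paper leaves implicit (the verification of the regularity hypothesis via (A3), the uniform bound on $\{\eta_n\}_n$, and the observation that the linear dependence on $\epsilon_{n-1}$ explains the unsquared summability condition), so there is nothing to correct.
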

\begin{proof}
	The convergence analysis follows similar steps as the ones depicted in \Cref{sec:first_error}. Summing the EVI \eqref{eq:discrete_EVI_FB_true} and using \eqref{eq:square_inequality_PG}, we have the first main ingredient, corresponding to \eqref{eq:for_rates1}
	\begin{equation}\label{eq:for_rates1_PG}
		\begin{aligned}
			2\sum_{n=0}^{N-1}\tau_n\cG(\cT_{\tau_n}(\mu_n))+W_2^2(\mu_N,\nu)\leq 2&\sigma_N \cG(\nu)+W_2^2(\mu^0,\nu)\\
			&-\sum_{n=0}^{N-1}(1-\tau_n L)W_2^2(\cT_{\tau_n}(\mu_n),\mu_n)+\sum_{n=0}^{N-1}\tilde{\epsilon}_n.
		\end{aligned}
	\end{equation}
	From \Cref{lem:second_ingredient_PG} we obtain the second main ingredient, corresponding to \eqref{eq:decrease_JKO}. In fact, for all $n\in \mathbb{N}$, letting $\mu=\mu_n$, $\bar \mu^+= \cT_{\tau_{n}}(\mu_{n})$ and $\bar \mu= \cT_{\tau_{n-1}}(\mu_{n-1})$ we obtain from \eqref{eq:lem_second_ingredient_PG} and the fact that the sequences in play are bounded, that there exists $C>0$ such that
	\begin{equation}
		\mathcal{G}(\cT_{\tau_{n}}(\mu_{n})) \leq \mathcal{G}(\cT_{\tau_{n-1}}(\mu_{n-1})) +  C\frac{\epsilon_{n-1}}{2\tau_n}, \quad \text{ for all } n \in \N.
	\end{equation}
	With these ingredients, it is possible to conclude similarly to \Cref{thm:JKO_convergence1}.
\end{proof}

\begin{remark}
		In the case $\lambda > 0$, using \eqref{eq:discrete_EVI_FB_true} and \eqref{eq:square_inequality_PG}, we obtain for all $\nu \in \argmin \cG$ that
		\[W_2^2(\mu_{n+1}, \nu)\leq (1-\tau_n \lambda)W_2^2(\mu_n,\nu)+\tilde{\epsilon}_n.\]
		From this, strong convergence results in $W_2$ can be achieved. However, since at each iteration the error $\epsilon_n$ is committed, we cannot expect linear convergence rates. This motivates us not to treat the case $\lambda > 0$ separately from the case $\lambda = 0$. A similar reasoning applies when assuming strong convexity along generalized geodesics of the functional $\mathcal{H}$, see also Remark~\ref{rem:strongly-convex}.
	\end{remark}
	\begin{remark}[Ergodic convergence]
		Similar considerations to those in Remark~\ref{rmk:ergodic} apply in this setting as well. In particular, under similar conditions to those in Remark~\ref{rmk:ergodic}, the convergence rate in \eqref{eq:rate_PG_bar1} can also be obtained for the Wasserstein barycenter sequence $\{\bar \beta_n\}_n$ formed from the first $n$ elements of the sequence $\{S_{\tau_i}(\mu_i)\}_i$, with weights $\{\frac{\tau_i}{\sigma_n}\}_0^{n-1}$. This result can hold without the additional assumptions required in point~2 of \Cref{thm:PG_convergence1}.
\end{remark}

With \Cref{thm:PG_convergence1} we can generalize the result by Diao, Balasubramanian, Chewi and Salim \cite[Theorem 5.3]{diao2023}. In their work they prove weak convergence for the proximal-gradient algorithm but they only work with Gaussians and the so-called Bures-Wasserstein space.
\begin{corollary}\label{cor:fb_convergence}
	Let $\mathcal{H} : \cP_2(\R^d) \to \mathbb{R} \ $ and $F$ satisfying (A1)-(A3) and suppose $\argmin \cG \neq \emptyset$. Let $\mu_{0}\in \cP_2(X)$ and $\{\mu_n\}_n$ satisfying
	\[\mu_{n+1}=J_{\tau , \calH}\circ (I-\tau \nabla F)_{\#}(\mu_n).\]
	Then $\{\mu_n\}_n$ converges with respect to the topology $\cT_{w,2}$ (and thus also narrowly) to some $\mu^*\in \argmin \cG$.
\end{corollary}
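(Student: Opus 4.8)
The plan is to obtain this corollary as the exact, constant-stepsize specialization of \Cref{thm:PG_convergence1}. The exact iteration $\mu_{n+1}=\cT_{\tau_n}(\mu_n)$ with the single stepsize $\tau_n=\tau$ trivially satisfies the inexactness condition $W_2(\mu_{n+1},\cT_{\tau_n}(\mu_n))\leq \epsilon_n$ under the choice $\epsilon_n=0$ for every $n$. I would therefore invoke \Cref{thm:PG_convergence1} with this null error sequence and constant stepsize, and simply verify that all of its hypotheses hold in this setting.

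First I would record the constant-stepsize bookkeeping: with $\tau_n=\tau$ one has $\sigma_n=\sum_{i=0}^{n-1}\tau_i=n\tau$, so that $\sum_i\tau_i=+\infty$ and $\sigma_n\to +\infty$, which is what makes the rate $O(1/\sigma_n)$ and the associated convergence argument meaningful. The admissibility conditions $\{\tau_n\}_n\subset \left(0,\frac{1}{L}\right)$ and $\sup_i\tau_i<\frac{1}{L}$ both reduce to the single requirement $\tau<\frac{1}{L}$, which is the standing assumption on the stepsize throughout this section and is already needed for the discrete EVI of \Cref{lem:discreteEVI2}; I would make this requirement explicit in the statement. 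The summability hypotheses are then immediate: $\sum_n\epsilon_n=0<\infty$, and the condition in point~2, namely $\sum_{n}\frac{\sigma_n}{\tau_n}\epsilon_{n-1}<\infty$, also holds trivially since every $\epsilon_{n-1}=0$.

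With all hypotheses satisfied, point~2 of \Cref{thm:PG_convergence1} applies and yields directly that $\{\mu_n\}_n$ converges with respect to the topology $\tau_{w,2}$ to some $\mu^*\in\argmin\cG$. Since $\tau_{w,2}$ is finer than the narrow topology, convergence in $\tau_{w,2}$ forces narrow convergence as well, which gives the parenthetical claim and completes the argument.

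I do not expect a genuine obstacle here, since the statement is precisely \Cref{thm:PG_convergence1} with the error terms switched off; the only points requiring care are bookkeeping in nature — confirming that the constant-stepsize choice indeed produces $\sigma_n\to\infty$, that the exact scheme is a legitimate instance of the inexact framework with $\epsilon_n\equiv 0$, and that the (implicitly assumed) bound $\tau<\frac{1}{L}$, inherited from the discrete EVI, is in force.
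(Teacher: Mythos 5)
Your proposal is correct and is exactly the paper's own proof: the corollary is obtained by invoking \Cref{thm:PG_convergence1} with $\epsilon_n=0$ and $\tau_n=\tau$ for all $n$, so that $\sigma_n=n\tau\to\infty$ and all summability conditions hold trivially. Your observation that the bound $\tau<\frac{1}{L}$ must be in force (it is left implicit in the corollary's statement but is required by the theorem and the discrete EVI) is a valid and worthwhile clarification.
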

\begin{proof}
	We can apply the previous result with $\epsilon_n=0$ and $\tau_n=\tau>0$, for all $n\in \N$.
\end{proof}

\begin{theorem}\label{thm:PG_convergence2}
	Let $\mathcal{H} : \cP_2(\R^d) \to \mathbb{R} \ $ and $F$ satisfying (A1)-(A3) and suppose $\argmin \cG \neq \emptyset$. Let $\{\epsilon_n\}_n\subset \mathbb{R}_{\geq 0}$ with $\sum_{n=0}^{\infty} \epsilon_n < \infty$, $\{\tau_n\}_n \subset \left(0,\frac{1}{L}\right)$ with $\sum_{i=0}^{\infty}\tau_i= \infty$, $\sup_i \tau_i < \frac{1}{L}$ and let $\sigma_n:= \sum_{i=0}^{n-1}\tau_i$, for $n\in\mathbb{N}$. Let $\{\eta_n\}_n$ and $\{\mu_n\}_n$ satisfying $\eta_{n}=(I-\tau \nabla F)_{\#}(\mu_n)$ and
	\begin{equation}\label{eq:second_error_final}
		\mathcal{H}(\mu_{n+1})+\frac{1}{2\tau}W_2^2(\mu_{n+1},\eta_{n})\leq \mathcal{H}(J_{\tau_n,\mathcal{H}}(\eta_{n}))+\frac{1}{2\tau}W_2^2(J_{\tau_n,\mathcal{H}}(\eta_{n}),\eta_{n})+ \frac{\epsilon_n^2}{2\tau_n},
	\end{equation}
	\begin{enumerate}
		\item It holds the rate
		\begin{equation}\label{eq:rate_PG_bar2}
			\cG(\beta_n)-\inf \cG = O\left(\frac{1}{\sigma_n}\right), \quad \text{as } n\to \infty,
		\end{equation}
		where $\beta_n := \mu_{j_n}$ with $j_n = \argmin_{i=0,\dots,n-1}\{\cG(\mu_i)\}$, defines the sequence of the best iterate.
		\item If $\sum_{n=0}^{\infty}\frac{\sigma_n}{\tau_n}\epsilon_{n}  < \infty$, then \[\cG(\mu_n)-\inf \cG = O\left(\frac{1}{\sigma_n}\right), \quad \text{for } n\to \infty,\] and $\{\mu_n\}_n$ converges with respect to the topology $\tau_{w,2}$ to some $\mu^*\in \argmin \cG$. In particular we have $W_p(\mu_k,\mu^*)\to 0$ for all $p\in[1,2)$.
	\end{enumerate}
\end{theorem}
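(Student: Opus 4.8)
The plan is to mirror the proof of \Cref{thm:second_error_choice}, feeding in the forward--backward ingredients from \Cref{thm:PG_convergence1} and \Cref{lem:second_ingredient_PG}. First I would convert the variational error into a distance-type one. With $\mathcal{H}_{\tau_n}(\cdot):=\mathcal{H}(\cdot)+\frac{1}{2\tau_n}W_2^2(\cdot,\eta_n)$, condition \eqref{eq:second_error_final} reads $\mathcal{H}_{\tau_n}(\mu_{n+1})\leq \mathcal{H}_{\tau_n}(J_{\tau_n,\mathcal{H}}(\eta_n))+\frac{\epsilon_n^2}{2\tau_n}$, which is exactly the situation of \Cref{thm:second_error_choice} with $(\mathcal{H},\eta_n)$ in place of $(\cG,\mu_n)$. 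Using a generalized geodesic between $\mu_{n+1}$ and $\cT_{\tau_n}(\mu_n)=J_{\tau_n,\mathcal{H}}(\eta_n)$ with base $\eta_n$ (\cite[Lemma 9.2.7]{AGS08}), the convexity of $\mathcal{H}$ and the $1$-convexity of $W_2^2(\eta_n,\cdot)$ along it, dividing by $(1-t)$ and letting $t\to1$ yields $W_2(\mu_{n+1},\cT_{\tau_n}(\mu_n))\leq\epsilon_n$. Hence $\{\mu_n\}_n$ satisfies \eqref{eq:FB}, so the lemma preceding \Cref{lem:second_ingredient_PG} gives the estimate \eqref{eq:square_inequality_PG}, the quasi-Fej\'er property $W_2^2(\mu_{n+1},\nu)\leq W_2^2(\mu_n,\nu)+\tilde\epsilon_n$ for $\nu\in\argmin\cG$, the existence of $\ell(\nu):=\lim_n W_2(\mu_n,\nu)$, and the boundedness of $\{\mu_n\}_n$ and $\{\cT_{\tau_n}(\mu_n)\}_n$; summing the EVI \eqref{eq:discrete_EVI_FB_true} and using \eqref{eq:square_inequality_PG} then produces the summed inequality \eqref{eq:for_rates1_PG}.

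For point~1 I would move the rate from $\cT_{\tau_n}(\mu_n)$ to $\mu_{n+1}$. Since $F$ is convex with at most linearly growing gradient and the iterates have uniformly bounded second moments, $\|\nabla F\|_{L^2(\cdot)}$ stays bounded along them, so $|\mathcal{E}_F(\mu_{n+1})-\mathcal{E}_F(\cT_{\tau_n}(\mu_n))|\leq C\,W_2(\mu_{n+1},\cT_{\tau_n}(\mu_n))\leq C\epsilon_n$; for the entropy part, \eqref{eq:second_error_final} together with the reverse triangle inequality gives $\mathcal{H}(\mu_{n+1})-\mathcal{H}(\cT_{\tau_n}(\mu_n))\leq \frac{C'\epsilon_n}{\tau_n}+\frac{\epsilon_n^2}{2\tau_n}$. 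Multiplying the resulting bound on $\cG(\mu_{n+1})-\cG(\cT_{\tau_n}(\mu_n))$ by $\tau_n$ leaves only summable terms, so $\sum_{n=0}^{N-1}\tau_n\cG(\mu_{n+1})\leq \sum_{n=0}^{N-1}\tau_n\cG(\cT_{\tau_n}(\mu_n))+O(1)$ already under $\sum_n\epsilon_n<\infty$. Combining this with \eqref{eq:for_rates1_PG} and the best-iterate bound $\cG(\beta_N)\leq \frac{1}{\sigma_N}\sum_{n=0}^{N-1}\tau_n\cG(\mu_{n+1})$ gives \eqref{eq:rate_PG_bar2}.

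For point~2 I would establish the descent $\cG(\mu_{n+1})\leq\cG(\mu_n)+\frac{\epsilon_n^2}{2\tau_n}$, the forward--backward analogue of the estimate that drives \Cref{thm:second_error_choice}. Since the right-hand side of \eqref{eq:second_error_final} is finite we have $\mathcal{H}(\mu_n)<\infty$, hence $\mu_n\in\dom(\mathcal{H})\subset\cP_2^r(\R^d)$ for $n\geq1$; this lets me re-run the proof of \Cref{lem:second_ingredient_PG} taking $\bar\mu=\mu=\mu_n$ (so $\epsilon=0$ and $\bar\eta=\eta_n$) and putting the inexact iterate $\mu_{n+1}$ in the role of $J_{\tau_n,\mathcal{H}}(\eta_n)$. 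The only change is that the exact minimality of the proximal step is weakened to \eqref{eq:second_error_final}, which adds the surplus $\frac{\epsilon_n^2}{2\tau_n}$; the forward-step cross term still vanishes since $T_{\eta_n}^{\mu_n}=(I-\tau_n\nabla F)^{-1}$ by \Cref{thm:brenier}(ii), so that $z-(I-\tau_n\nabla F)(T_{\eta_n}^{\mu_n}(z))=0$ for $\eta_n$-a.e.\ $z$, and the leftover quadratic term is non-positive. Multiplying by $\sigma_n$, telescoping (the $n=0$ term is $0\leq0$ because $\sigma_0=0$, so the possible irregularity of $\mu_0$ is harmless) and inserting the summed inequality from point~1 gives $\cG(\mu_N)-\inf\cG\leq C/\sigma_N$; here one only needs $\sum_n\frac{\sigma_n}{\tau_n}\epsilon_n^2<\infty$, which is implied by the stated $\sum_n\frac{\sigma_n}{\tau_n}\epsilon_n<\infty$ since eventually $\epsilon_n\leq1$.

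The $\tau_{w,2}$-convergence of $\{\mu_n\}_n$ then follows from the Opial argument used in \Cref{thm:PG_convergence1}: by the rate just proved $\cG(\mu_n)\to\inf\cG$, so every $\tau_{w,2}$-cluster point of the bounded sequence $\{\mu_n\}_n$ is a minimizer (by the $\tau_{w,2}$-lower semicontinuity of $\cG$, \Cref{thm:lsc_wlsc}), while $\ell(\nu)=\lim_n W_2(\mu_n,\nu)$ exists for every $\nu\in\argmin\cG$ by quasi-Fej\'er; \Cref{thm:Opial} then forces a unique cluster point $\mu^*\in\argmin\cG$, whence $W_p(\mu_n,\mu^*)\to0$ for $p\in[1,2)$. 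The main obstacle is the inexact descent of point~2: one must verify that $\mu_n$ is regular (so that it may serve as base measure and $\cG(\mu_n)$ is finite), that the forward-step cross term still cancels with $\mu_n$ itself as base rather than an auxiliary regularization, and that the variational surplus $\frac{\epsilon_n^2}{2\tau_n}$ is the only extra term entering the estimate. Everything else---the reduction to a distance error, the best-iterate rate, and the Opial step---is routine once \Cref{lem:second_ingredient_PG}, \eqref{eq:for_rates1_PG} and the scheme of \Cref{thm:second_error_choice} are in place.
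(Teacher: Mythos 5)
Your proposal is correct, and its overall skeleton (reduce the variational error to a distance-type error, transfer the rate for point~1, descent plus telescoping plus Opial for point~2) matches the paper's. The interesting difference is in how you obtain the descent inequality driving point~2. The paper chains two comparisons through the exact prox output: from \eqref{eq:second_error_final} it derives $2\tau_n\cG(\cT_{\tau_n}(\mu_n))\geq 2\tau_n\cG(\mu_{n+1})-(c_1+c_2)\epsilon_n-\epsilon_n^2$ (its inequality \eqref{eq:ineq_second_error_PG}), and then applies \Cref{lem:second_ingredient_PG} with $\bar\mu=\mu_n$ to get $\cG(\cT_{\tau_n}(\mu_n))\leq\cG(\mu_n)+c_3\frac{\epsilon_n}{2\tau_n}$; the resulting descent $\cG(\mu_{n+1})\leq\cG(\mu_n)+C\frac{\epsilon_n}{2\tau_n}+\frac{\epsilon_n^2}{2\tau_n}$ carries $O(\epsilon_n/\tau_n)$ terms, which is exactly why the theorem's hypothesis is $\sum_n\frac{\sigma_n}{\tau_n}\epsilon_n<\infty$. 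You instead re-run the proof of \Cref{lem:second_ingredient_PG} with $\bar\mu=\mu=\mu_n$ (legitimate: $\mu_n\in\dom(\mathcal H)\subset\cP_2^r(\R^d)$ for $n\geq1$ by \eqref{eq:second_error_final} and (A3), and $\eta_n=(I-\tau_n\nabla F)_{\#}\mu_n$ is itself regular since $I-\tau_n\nabla F$ is bi-Lipschitz for $\tau_n<1/L$, so no auxiliary $\delta$-approximation of $\bar\eta$ is needed) and with the inexact iterate $\mu_{n+1}$ in the role of the prox output; the exact first-step minimality is replaced by \eqref{eq:second_error_final} combined with the minimality of $J_{\tau_n,\mathcal H}(\eta_n)$ against $\mu_n$, the cross term still vanishes because $T_{\eta_n}^{\mu_n}=(I-\tau_n\nabla F)^{-1}$, and the quadratic leftover is nonpositive. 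This yields the sharper descent $\cG(\mu_{n+1})\leq\cG(\mu_n)+\frac{\epsilon_n^2}{2\tau_n}$, so your argument needs only $\sum_n\frac{\sigma_n}{\tau_n}\epsilon_n^2<\infty$, which, as you note, is implied by the stated hypothesis since $\epsilon_n\to0$. In other words, you prove the theorem as stated via a strictly stronger intermediate estimate, and your route would in fact allow the hypothesis of point~2 to be weakened; the remaining differences (running the Opial argument directly on $\{\mu_n\}$ using the rate on $\cG(\mu_n)$ rather than routing through \Cref{thm:PG_convergence1}, and the two-sided Lipschitz-type bound on $\mathcal E_F$ where only one direction is needed) are cosmetic.
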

\begin{proof}
	Following the first part of \Cref{thm:second_error_choice}, we obtain similarly from the condition \eqref{eq:second_error_final}, that
	\[W_2(\cT_{\tau_n}(\mu_n),\mu_{n+1})=W_2(J_{\tau_n,\mathcal{H}}(\eta_n),\mu_{n+1})\leq \epsilon_n.\]
	With this, we can already prove all the results of \Cref{thm:PG_convergence1}. On the other hand, the inequality \eqref{eq:second_error_final} implies \begin{equation*}
		\begin{aligned}
			2\tau_n\mathcal{H}(\cT_{\tau_n}(\mu_{n})) & \geq 2\tau_n\mathcal{H}(\mu_{n+1})+W_2^2(\mu_{n+1},\eta_{n})- W_2^2(\cT_{\tau_n}(\mu_{n}),\eta_{n})- \epsilon_k^2\\
			& \hspace{-1cm}\geq 2\tau_n\mathcal{H}(\mu_{n+1})-\left(W_2(\mu_{n+1},\eta_{n})+ W_2(J_{\tau}(\eta_{n}),\eta_{n})\right)W_2(\mu_{n+1},\cT_{\tau_n}(\mu_n)) - \epsilon_n^2 \\
			& \hspace{-1cm}\geq 2\tau_n\mathcal{H}(\mu_{n+1})-c_1\epsilon_n - \epsilon_n^2,
		\end{aligned}
	\end{equation*}
	for some $c_1>0$. Setting $\bar \mu_{n+1}:=\cT_{\tau_n}(\mu_{n})$ we also have 
	\begin{equation*}
		\begin{aligned}
			\mathcal{E}_F(\bar \mu_{n+1}) & = \int F(x) \, d\bar \mu_{n+1}(x) \\ 
			&\geq \int F\left(T_{\bar \mu_{n+1}}^{\mu_{n+1}}(x)\right) +\langle \nabla F\left(T_{\bar \mu_{n+1}}^{\mu_{n+1}}(x)\right),x-T_{\bar \mu_{n+1}}^{\mu_{n+1}}(x)  \rangle \, d\bar \mu_{n+1}(x)\\
			&	\geq \mathcal{E}_F(\mu_{n+1}) - \left(\int \left\|\nabla F(x')\right\|^2\, d \mu_{n+1}(x')\right)^{\frac{1}{2}} W_2(\mu_{n+1},\cT_{\tau_n}(\mu_n)).
		\end{aligned}
	\end{equation*}
	By noticing that $\|\nabla F(x')\|^2 \leq 2\|\nabla F(x')-\nabla F(0)\|^2+2\|\nabla F(0)\|^2 \leq L^2 \|x'\|^2 + 2\|\nabla F(0)\|^2$, and $\{\mu_n\}_n\subset \cP_2(\R^d)$ is bounded, and $\tau_n < \frac{1}{L}$, there exists a constant $c_2>0$ such that
	\begin{equation*}
		\begin{aligned}
			2\tau_n\mathcal{E}_F(\bar \mu_{n+1})	\geq 2\tau_n\mathcal{E}_F(\mu_{n+1}) - c_2 \epsilon_n
		\end{aligned}
	\end{equation*}
	and thus
	\begin{equation}\label{eq:ineq_second_error_PG}
		\begin{aligned}
			2\tau_n\mathcal{G}(\cT_{\tau_{n}}( \mu_{n})) \geq 2\tau_n\mathcal{G}(\mu_{n+1}) - c_1\epsilon_n - \epsilon_n^2 - c_2 \epsilon_n
		\end{aligned}
	\end{equation}
	Combining this with \eqref{eq:for_rates1_PG}, we obtain
	\begin{equation}\label{eq:almost_final_second_err_PG}
		2\sum_{n=0}^{N-1}\tau_n\cG(\mu_{n+1})\leq 2\sigma_N \cG(\nu)+W_2^2(\mu^0,\nu)+\sum_{n=0}^{N-1}\tilde{\epsilon}_n+(c_1+c_2)\sum_{n=0}^{N-1}\epsilon_n + \sum_{n=0}^{N-1}\epsilon_n^2.
	\end{equation}
	Since by \eqref{eq:second_error_final} we also have $\mu_n\in \cP_2^r(\R^d)$ for all $n\in \N$, we can use \Cref{lem:second_ingredient_PG} with $\bar \mu^+=\cT_{\tau_{n}}( \mu_{n})$, $\bar \mu = \mu_n$ and obtain
	\begin{equation}
		\mathcal{G}(\cT_{\tau_{n}}( \mu_{n})) \leq \mathcal{G}(\mu_n) + c_3\frac{\epsilon_n}{2\tau_n}
	\end{equation}
	for some $c_3 > 0$. Combining this with \eqref{eq:ineq_second_error_PG}, we arrive to
	\begin{equation*}
		\mathcal{G}(\mu_{n+1}) \leq \mathcal{G}(\mu_n) + (c_1+c_2+c_3)\frac{\epsilon_n}{2\tau_n} + \frac{\epsilon_n^2}{2\tau_n}.
	\end{equation*}
	We denote by $C = c_1+c_2+c_3+\max_i \epsilon_i$, multiply by $\sigma_n$, and obtain
	\begin{equation}
		\begin{aligned}
			(\sigma_{n+1}-\tau_n)\cG(\mu_{n+1}) - \sigma_{n}\cG(\mu_n) \leq C\frac{\sigma_n}{2\tau_n}\epsilon_n.
		\end{aligned}
	\end{equation}
	Summing up from $0$ to $N-1$ and recalling that $\sigma_0=0$, we have
	\begin{equation}
		\begin{aligned}
			\sigma_{N}\cG(\mu_N) - C \sum_{n=0}^{N-1}\frac{\sigma_n}{2\tau_n}\epsilon_{n} \leq \sum_{n=0}^{N-1}\tau_n \cG(\mu_{n+1}).
		\end{aligned}
	\end{equation}
	which combined with \eqref{eq:almost_final_second_err_PG} leads to
	\begin{equation*}
		\cG(\mu_{N}) - \cG(\nu) \leq \frac{1}{2\sigma_N}\left(W_2^2(\mu^0,\nu)+\sum_{n=0}^{N-1}\left(\tilde{\epsilon}_n+(c_1+c_2)\epsilon_n + \epsilon_n^2+C\frac{\sigma_n}{\tau_n}\epsilon_{n}\right)\right),
	\end{equation*}
	which concludes the proof.
\end{proof}

\section*{Conclusions}

In this paper, we studied the convergence properties of inexact Jordan-Kinderlehrer-Otto (JKO) schemes and proximal-gradient algorithms in Wasserstein spaces. We focused on settings where obtaining exact solutions to iterative minimization problems is impractical and introduced controlled approximations for both the Wasserstein distance and the energy functionals.
We provided rigorous convergence guarantees, demonstrating that weak convergence remains attainable in the presence of inexact computations. Additionally, we extended our analysis to proximal-gradient algorithms. Our findings lay the groundwork for broader applicability of these schemes in practical settings.  We also incorporated the flexibility of varying stepsizes, leading to new convergence insights. This study also raises several compelling questions. Notably, it highlights the importance of quantifying the approximation behavior of existing methods, such as ALG2 in \cite{BCL16}, when applied to solve \eqref{eq:prox_point_intro}. Future research may pursue this direction by analyzing a range of existing algorithms and proposing new ones for solving \eqref{eq:prox_point_intro} or the associated saddle point problem derived from the Benamou–Brenier formulation, with quantitative approximation results. This analysis can be complemented by estimates concerning approximations coming from regularized formulations as in \cite{CDPS17,LLO23, LLW20, LWL24}. Further error estimates may also be developed to account for discretization of measures and, in the Benamou–Brenier case, temporal discretization.

The regularity property (A3) seems essential for the analysis carried out in \cite{WPG2020} as well as for our current analysis,  but we conjecture that it might be weakened or removed altogether by employing the subdifferential in \cite[Theorem 10.3.6]{AGS08}.  Investigating this possibility is a promising avenue for future work.

\section*{Acknowledgments}

S.D.M, E.N. and S.V. are supported by the MUR Excellence Department Project awarded to the department of mathematics at the University of Genoa, CUP D33C23001110001, by the US Air Force Office of Scientific Research (FA8655-22-1-7034). S.V. acknowledges the support of the European Commission (grant TraDE-OPT 861137). S.D.M. and S.V. acknowledge the support of the Ministry of Education, University and Research (grant BAC FAIR PE00000013 funded by the EU - NGEU) and the financial support of the PRIN 202244A7YL. S.D.M., E.N. and S. V. are members of the Gruppo Nazionale per l’Analisi Matematica, la Probabilità e le loro Applicazioni (GNAMPA) of the Istituto Nazionale di Alta Matematica (INdAM). This work represents only the view of the authors. The European Commission and the other organizations are not responsible for any use that may be made of the information it contains.

\bibliography{biblio} 
\bibliographystyle{plain}

\end{document}